\documentclass{article}

\usepackage[a4paper, margin=2.0cm]{geometry}
\usepackage{amsmath,amssymb,amsthm}
\newtheorem{theorem}{Theorem}
\newtheorem{corollary}{Corollary}
\newtheorem{lemma}{Lemma}
\newtheorem{proposition}{Proposition}
\newtheorem{remark}{Remark}

\DeclareMathOperator*{\argmax}{argmax}

\usepackage{pdfpages}

%

\usepackage{cancel}

\usepackage{marginnote}
\setlength{\marginparwidth}{2cm}


\usepackage{bm}

\newcommand{\mA}{{\mathcal A}}
\newcommand{\mB}{{\mathcal B}}
\newcommand{\mC}{{\mathcal C}}

\newcommand{\mJ}{{\mathcal J}}
\newcommand{\mK}{{\mathcal K}}

\newcommand{\mQ}{{\mathcal Q}}
\newcommand{\procQ}{\bm{{\mathcal Q}}}

\newcommand{\mS}{{\mathcal S}}

\newcommand{\mCl}{{\mathcal Cl}}

\newcommand{\bN}{{\mathbb N}}
\newcommand{\bR}{{\mathbb R}}
\newcommand{\bZ}{{\mathbb Z}}

\newcommand{\bE}{{\mathbb E}}



%

\newcommand{\vecQ}{{\bf Q}}

\newcommand{\vecpi}{{\boldsymbol \pi}}

\newcommand{\vecrho}{{\boldsymbol \rho}}

\newcommand{\veclambda}{{\boldsymbol \lambda}}

\newcommand{\veca}{{\boldsymbol a}}

\newcommand{\vecsigma}{{\boldsymbol \sigma}}


\usepackage{authblk}
\author[1]{M. Bramson}
\author[2]{B. D'Auria}
\author[3]{N. Walton}

\affil[1]{School of Mathematics, University of Minnesota. {bramson@math.umn.edu}}
\affil[2]{Departmento de Estad\'istica, Universidad Carlos III de Madrid. {bernardo.dauria@uc3m.es}}
\affil[3]{Alan Turing Building, University of Manchester. {neil.walton@manchester.ac.uk}}

\usepackage{enumitem}
\usepackage{soul} 
\usepackage{tikz}
\usepackage[normalem]{ulem}
\usetikzlibrary{shapes.misc}
\usepackage{todonotes} 
\usepackage{xfrac}




\usepackage[colorlinks=true,breaklinks=true,bookmarks=true,urlcolor=blue, 
     citecolor=blue,linkcolor=blue,bookmarksopen=false,draft=false]{hyperref}

\def\EMAIL#1{\href{mailto:#1}{#1}}




\begin{document}







\title{Stability and Instability of the MaxWeight Policy}

\maketitle
\begin{abstract}
	Consider a switched queueing network with general routing among its queues.
	The MaxWeight policy assigns available service by maximizing the objective function $\sum_j Q_j \sigma_j$ among  the  different  feasible  service  options,  where $Q_j$ denotes  queue  size  and $\sigma_j$ denotes  the  amount  of  service  to  be  executed  at   queue $j$.
	MaxWeight  is  a  greedy  policy that does not depend on knowledge of arrival rates and is straightforward to implement.
	These properties,  as  well  as  its  simple  formulation,  suggest  MaxWeight  as  a  serious  candidate  for implementation in the setting of switched queueing networks; it has been extensively studied in the context of communication networks.
	However, a fluid model variant of MaxWeight was shown by Andrews--Zhang (2003) not to be maximally stable.
	Here, we prove that MaxWeight itself is not in general maximally stable.
	We also prove MaxWeight is maximally stable in a much more restrictive setting, and that a weighted version of MaxWeight, where the weighting depends on the traffic intensity, is always stable. 	
\end{abstract}


%



\section{Introduction.}\label{sec:intro}






The MaxWeight policy has been extensively employed for the past two decades in the setting of switched networks, in particular, for communication networks.  It is a discrete time policy that allocates service of jobs at each time step by maximizing a corresponding objective function over a given
set $\mS$ of feasible service options, or \textit{schedules}.

Let $\mJ$ denote the set of queues of the network, which are assumed to be single class; let $Q_j$, $j\in \mJ$, be the corresponding queue lengths at a given time; and let $\sigma_j$ be the nonnegative integer valued
amount of service to be executed at the queue at this time.  Then the \textit{MaxWeight policy} chooses the schedule that solves the optimization problem

\begin{align}
\label{eq1}
&\text{maximize}\quad\sum_{j\in\mJ} Q_j {\sigma_j}\quad
\text{over} \quad  {\vecsigma} \in {\mS}\,.
\end{align}

In the case of ties in (\ref{eq1}), $\vecsigma$ may be chosen arbitrarily (or randomly) among these schedules.
We denote  by $\vecpi(\vecQ)$ the schedule of the MaxWeight policy for the queue length vector
$\vecQ := (Q_j:j\in\mJ)$, and denote by ${\procQ}=(\vecQ(t):\, t\in \bZ_+)$
the associated \textit{queueing network process} defined by the MaxWeight policy. 
The queueing network process is a discrete time countable state space Markov chain satisfying the Strong Markov Property. 
{For the set $\mS \subset \bZ^{|\mJ|}_+$,} we will always assume that
$\vecsigma \in \mS$ implies that $\vecsigma' \in \mS$, for
$\sigma ' \in \bZ^{|\mJ|}_+$ with
 $\vecsigma'  \le \vecsigma$ (i.e., $\sigma'_j \le \sigma_j$ for $j\in \mJ$).
 Here, as in the remainder of the paper, $\bZ_+ := \{0,1,2,\ldots\}$, whereas  $\bN := \{1,2,\ldots\}$.

The MaxWeight policy has the virtues of being simple to formulate and of its implementation not depending on knowledge of the arrival rates of jobs into the network.  We also note that the Longest-Queue-First-Served (LQFS) policy is a special case of MaxWeight. 

Rigorous results in the MaxWeight literature typically are for single-hop networks, that is, networks where each job leaves the network immediately after its service at any queue. The seminal paper of Tassiulas and Ephremedes \cite{TaEp92} showed that MaxWeight is \textit{maximally stable} for single-hop networks.
That is, the associated queueing network process {{$\procQ$}} is positive recurrent whenever the traffic intensity vector $\bm{\rho}$ is contained
in the interior of the convex hull of $\mS$.

 {MaxWeight was studied for single-hop networks in Tassiulas and Ephremedes \cite{tassiulas1993dynamic} in the context of wireless networks and in McKeown et al. \cite{MMAW99} in the context of internet router design.
 Since these works, MaxWeight and its generalizations have been extensively studied in the context of single-hop switched networks. We refer the reader to the following surveys and books \cite{georgiadis2006resource,jiang2010scheduling,kelly2014stochastic,neely2010stochastic,srikant2013communication}, which provide a broad review of the analysis, extensions and applications of MaxWeight.}

In the multihop setting, that is, {where jobs completing service at a queue can be routed to another queue, there is little literature for the MaxWeight policy. {However,} there is considerable literature for the somewhat related BackPressure policy.  As its name suggests,  the BackPressure policy chooses the schedule $\vecsigma \in \mS$ where the sum of the difference in queue lengths between consecutive queues is maximized; the policy is designed to balance the lengths among different queues. It was shown in \cite{TaEp92} that BackPressure is maximally stable.  This result in fact includes the multiclass setting as well as the single class setting, when the BackPressure algorithm is applied to individual classes, rather than to entire queues.  Other literature on BackPressure includes \cite{georgiadis2006resource} and \cite{ShWi11}.
We note that BackPressure requires knowledge of the routing probabilities between queues.

In contrast to the BackPressure policy, rigorous stability results are lacking for MaxWeight in the multihop setting.
 Authors of the current paper were approached on more than one occasion about this stability, {of which we were at the time ignorant.}
%

However,
in a relatively unknown paper, Andrews and Zhang \cite{andrews2003achieving} studied the fluid model analog of a variant of MaxWeight for a switched network possessing {state-dependent} arrivals, and showed  that this fluid model is not maximally stable.  (The modulated arrival rate is employed to produce instability.)  The paper also provided simulations indicating similar unstable behavior for {stationary} arrivals in {its} switched network analog.

In the current paper, we will prove that the MaxWeight policy itself is not maximally stable for {multihop} single class switched networks. 
This is the first mathematical proof of transience of MaxWeight under subcritical arrival rates.
A counterexample is given by Theorem \ref{thrm:MW} later in the introduction; the detailed argument will be given in Section  \ref{sec:proofs}, after certain definitions are presented in Section \ref{sec:model}. 
Since Section \ref{sec:proofs} is quite involved, we recommend that the reader skip it on a first reading.   The argument follows in spirit the constructions in Lu--Kumar \cite{LuKu91} and Rybko--Stolyar \cite{RySt92}.

Positive recurrence of subcritical multihop networks with
 the Longest-Queue-First-Served policy is demonstrated in the literature in specific situations (see \cite{Pedarsani2016a,Dimakis2006a,Baharian2011a}, and the included references).
In Appendix \ref{appendixD}, we reinterpret the LQFS policy as a special case of MaxWeight, and present an example  that is transient; the argument is analogous to that of the above counterexample for MaxWeight. 

%
	%
  
Although it seems that general stability results for MaxWeight are the exception rather than the rule for switched networks,
we will show that MaxWeight is maximally stable in the restrictive setting of a tandem single class switched network with equal mean service times at different queues.  This result follows from the more general result that, for any single class switched network,
an appropriately weighted version of MaxWeight will always be stable, with the weighting at each queue
depending on the traffic intensity at the queue. This weighting reflects the underlying structure of the MaxWeight policy and provides insight as to why unweighted MaxWeight is itself not the mathematically correct structure to ensure positive recurrence of the associated queueing network process.   These results are summarized
in Theorems \ref{thrm:p-MW} and \ref{tandemtheorem} later in the introduction, with more detail being given in Section \ref{sec:stable}.
These results are proved by using fluid models and constructing appropriate Lyapunov functions for these fluid models.  The required fluid model machinery is summarized in the Appendix.

So far, we have considered only the stability of switched queueing networks that are single class.  In Section \ref{sec:multiclass}, we briefly discuss
 the stability of multiclass switched networks that are FIFO within each queue.  Conditions for stability of multiclass switched networks are more elusive than those in the single class setting. We simulate a queueing network that is unstable, although it is of Kelly type (i.e., the mean service rates at all classes within a queue are equal).  
We do not provide the proof, which is a more lengthy version of that of Theorem \ref{thrm:MW}. 
We then show that a modification of the Lyapunov function from the single class setting implies stability of multiclass switched networks  for a particular variant of MaxWeight. 


In Section \ref{sec:Conc}, we will briefly compare the stability of the MaxWeight policy for multiclass switched networks with that of the \emph{ProportionalScheduler}, where the terms $Q_j \sigma_j$ in (\ref{eq1}) are replaced by $Q_j \log \sigma_j$.  As shown in 
 Bramson et al. \cite{BDW17}, the ProportionalScheduler is maximally stable for all multiclass multihop switched networks that are of Kelly type (and hence is automatically maximally stable for all single class multihop switched networks).  We will give
elementary heuristic reasoning  why the term $\log \sigma_j$ produces a more stable policy than does $\sigma_j$.


In the remainder of the introduction, we present Theorems \ref{thrm:MW}--\ref{tandemtheorem}. 



\subsection*{An example showing the instability of MaxWeight.}
In Section \ref{sec:proofs}, we will prove instability  for multihop switch networks with the MaxWeight policy.  We now state this result, Theorem \ref{thrm:MW}.   We refer the reader to Section \ref{sec:model} for certain precise definitions and conditions that are required for the theorem. 

The traffic intensity $\bm{\rho}=(\rho_j :j\in\mJ)$ of a switched network {{is a fundamental concept in queueing theory and}} gives the long-run average rate at which work destined for each queue arrives in a network; it is defined in Section \ref{sec:model}. In order {for work not to accumulate
somewhere in the network}},
$\bm{\rho}$ must also equal the long-term average service rates {attained by {applying schedules in $\mS$ to the queueing network process  {$\procQ$}}}.
The \textit{subcritical region} $\mC$ is the interior of the convex hull $<\mS>$ of $\mS$, and a fixed policy is defined to be \textit{maximally stable} if the queueing network process {$\procQ$} is positive recurrent whenever
$\vecrho\in \mC$.


In Theorem \ref{thrm:MW}, we employ the single class switched network depicted in Figure \ref{Fig2}.
\begin{figure}
	\centering
%
\begin{tikzpicture}[scale=1]
\def\hsp{0.25} 
\def\vsp{0.5} 
\def\arrL{2} 
\def\stSep{4.75}

\tikzset{
    pic shift/.store in=\shiftcoord,
    pic shift={(0,0)},
    queue/.pic = {
        \begin{scope}[shift={\shiftcoord}]
            \coordinate (_arr) at (-0.25,0);
            \coordinate (_in) at (0.5,0);
            \coordinate (_dep) at (1,0);
            \draw (0,0.25) -- (1,0.25) -- (1, -0.25) -- (0,-0.25);
         \end{scope}
	}
}

	\node (A) [draw, thick, shape=rectangle, minimum width=2cm, minimum height=4cm, anchor=center] 
		at (0,0)  {\raisebox{9em}{$\mathcal{A}$}};
	\path pic [rotate=180] (A0) at (0.5,0.875) {queue} (A0_in) node {$\mathcal{A}_0$};
	\path pic (A1) at (-0.5,-0.25) {queue} (A1_in) node {$\mathcal{A}_1$};
	\path pic (AJ) at (-0.5,-1.50) {queue} (AJ_in) node {$\mathcal{A}_J$};
	\path (0,0) -- ++(0,-0.75) node {$\vdots$};
	\draw [<-] (A1_arr.west) -- ++(-1,0) node [left]{$a/J$};
	\draw [<-] (AJ_arr.west) -- ++(-1,0) node [left]{$a/J$};
	\draw [->] (A0_dep.east)  -- ++(-1.5,0);	

	\node (B) [draw, thick, shape=rectangle, minimum width=2cm, minimum height=4cm, anchor=center] 
		at (3.25,0) {\raisebox{-10em}{$\mathcal{B}$}};
	\path pic [rotate=180] (BJ) at (3.75,1.50) {queue} (BJ_in) node {$\mathcal{B}_J$};
	\path (0,0) -- ++(3.25,1) node {$\vdots$};
	\path pic [rotate=180] (B1) at (3.75,0.25) {queue} (B1_in) node {$\mathcal{B}_1$};
	\path pic (B0) at (2.75,-0.875) {queue} (B0_in) node {$\mathcal{B}_0$};
	\draw [<-] (B1_arr.west) -- ++(1,0) node [right]{$a/J$};
	\draw [<-] (BJ_arr.west) -- ++(1,0) node [right]{$a/J$};
	\draw [->] (B0_dep.east)  -- ++(1.5,0);

	\draw [->] (A1_dep.east)  -- ([shift={(0,+0.25em)}]B0_arr.east);
	\draw [->] (AJ_dep.east)  -- ([shift={(0,-0.25em)}]B0_arr.east);
	\draw [->] (BJ_dep.east)  -- ([shift={(0,+0.25em)}]A0_arr.east);
	\draw [->] (B1_dep.east)  -- ([shift={(0,-0.25em)}]A0_arr.east);

\end{tikzpicture}
%
	\quad
%
\begin{tikzpicture}[scale=0.7]


	\draw[fill=gray!20] (0,0) -- 
		(1,0) node[below]{\tiny $1$} -- 
		(0,3) node[left]{\tiny $\nu$} -- 
		cycle;

	\draw[->, thick] (0,0)--(3.5,0) node[right]{$\sigma_{\mA_0}$};
	\draw[->, thick] (0,0)--(0,3.5) node[above]{$\sum_{j \geq 1} \sigma_{\mA_j}$};
	

		
\end{tikzpicture}
%
	\caption{On the left, the switched network in Theorem \ref{thrm:MW} with parameters $a$ and $J$.
	On the right is the projection of the set $<\mS>$ on the space $(\sigma_{\mA_0},\sum_{j\ge 1}\sigma_{\mA_j})\in\bR^2$.
	\label{Fig2}}
\end{figure}
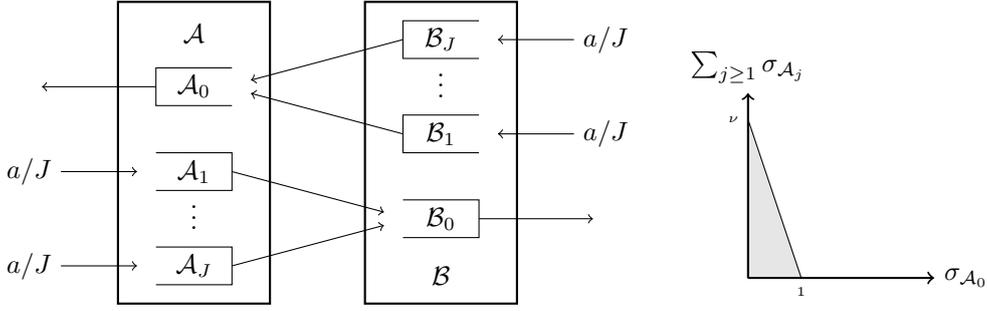
For given $J\in \mathbb{Z}_+$, there are two sets of queues, {$\{ \mA_1,...,\mA_J\}$} and {$\{\mB_1,...,\mB_J\}$}, with arrivals occurring at each end of the network, at rate $a$, that choose randomly between the corresponding $J$ queues. 
We assume that, over each time unit, there is at most $1$ external arrival at each queue, and that arrivals are i.i.d. at different times.
 There are also two queues {$\mA_0$} and {$\mB_0$} that
 receive jobs after their service is completed at the respective queues {{$\{\mB_1,...,\mB_J\}$} and {$\{ \mA_1,...,\mA_J\}$}}.
{For convenience, we denote by $\mA$ and $\mB$ the {\textit{components}} consisting of the unions of all queues of the form {$\mA_j$ and $\mB_j$, $j=0,...,J$, {respectively}}.  

We assume that jobs are unit sized, and so the traffic intensity $\vecrho$ is given by
\begin{align}
\label{displayforrho}
\rho_{\mA_0} = \rho_{\mB_0} =  a,\qquad\text{and}\qquad \rho_{\mA_j} = \rho_{\mB_j} = \frac{a}{J}\, , \quad j=1,...,J\,.
\end{align}
{
For given $\nu \in \bN$, we specify the set of feasible schedules $\mS$ in (\ref{eq1}) by the conditions on $\mA$ and $\mB$,
}

\begin{align}
\label{Sfortheorem1}
{\sigma_{\mathcal{A}_0}}{}
+
\frac{1}{\nu}
\sum_{j=1}^J
	\sigma_{\mA_j}
\leq 1
\qquad  \text{and} \qquad
{\sigma_{\mathcal{B}_0}}
+
\frac{1}{\nu}\sum_{j=1}^J
	\sigma_{\mB_j}
\leq 1\, .
\end{align}
In other words, during each time unit,
at each nonempty component either up to $\nu$ jobs are served from queues containing jobs that arrive from outside the network or a single job is served from the queue serving jobs that arrive internally.  
It follows from displays (\ref{displayforrho}) and (\ref{Sfortheorem1}) that $\vecrho \in \mathcal C$ whenever
\begin{equation}\label{superqueueload}
r_{\vecrho} := a \left( 1 + \frac{1}{\nu} \right) < 1 \, .
\end{equation}

Theorem \ref{thrm:MW} provides conditions  on the parameters $a$, $\nu$, and $J$ such that the queueing network processes for the switched networks in Figure \ref{Fig2} are subcritical but  transient under the MaxWeight policy.

\begin{theorem}
\label{thrm:MW}
 Consider the MaxWeight policy for the single class {multihop} switched network represented by Figure \ref{Fig2} and with $\mS$ given by (\ref{Sfortheorem1}).   Assume that (\ref{superqueueload}) is satisfied and that 
\begin{align}
\label{thm1condition}
1 < \nu < J 
\qquad \text{and} \qquad
\frac{J}{2J-\nu} < a < 1 - \frac{(J+\nu)(J+\nu^2)}{\nu(J^2+ J+ \nu^2)} \ .
\end{align}
Then $\vecrho\in \mC$ and the associated queueing network process {$\procQ$} is transient.
\end{theorem}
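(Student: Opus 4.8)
The plan is to pass to the fluid model of the MaxWeight process for the network of Figure \ref{Fig2} and to exhibit an explicit fluid trajectory that diverges to infinity; transience of the Markov chain $\procQ$ then follows from a fluid-to-transience argument. The first step is a dimensional reduction. Because external arrivals are spread equally (rate $a/J$) over $\mA_1,\dots,\mA_J$ and MaxWeight, when it serves those queues, allocates all $\nu$ units of its service to the longest among them, the outer queues of each component balance; in the fluid limit $\max_{j\ge1}Q_{\mA_j}=\tfrac1J\sum_{j\ge1}Q_{\mA_j}$, and likewise for $\mB$. Establishing this state-space collapse is the first technical point. Granting it, the system collapses to the four fluid coordinates $x=Q_{\mA_0}$, $y=\sum_{j\ge1}Q_{\mA_j}$, $\bar x=Q_{\mB_0}$, $\bar y=\sum_{j\ge1}Q_{\mB_j}$, and from (\ref{Sfortheorem1}) the MaxWeight rule becomes: component $\mA$ devotes its service to $\mA_0$ at rate $1$ when $x>\tfrac\nu J y$, and otherwise drains $y$ at rate $\nu$; symmetrically for $\mB$ with threshold $\bar x>\tfrac\nu J\bar y$. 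The resulting switched ODE, together with the sliding constraints when $y$ or $\bar y$ hits $0$, is the object to be solved.

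The instability mechanism is transparent at the level of the total number of jobs $W=x+y+\bar x+\bar y$. Jobs leave the network only when an inner queue is served, each completion occurring at rate $1$, while work enters at rate $2a$; hence $\dot W = 2a-(\mathbf 1\{\mA\text{ serves }\mA_0\}+\mathbf 1\{\mB\text{ serves }\mB_0\})$. The crux is that the $1/\nu$ weighting in (\ref{Sfortheorem1}) couples the components so that, along the trajectory I construct, \emph{exactly one} inner queue is served at any time: whenever $\mA$ serves $\mA_0$, the jobs feeding $\mA_0$ come from $\mB$ draining its outer queues, which starves $\mB_0$ and keeps $\bar x$ below threshold, so $\mB$ does not serve $\mB_0$; the roles then reverse. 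In this single-inner regime $\dot W = 2a-1$, which is strictly positive precisely because the lower bound in (\ref{thm1condition}) forces $a>\tfrac{J}{2J-\nu}>\tfrac12$. Thus $W$ grows linearly and the fluid solution escapes to infinity.

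Concretely I would build a self-similar (periodic-up-to-scaling) fluid solution consisting of four phases — the two components alternating between ``serve-inner'' and ``drain-outer'', with the sliding phases that occur as $\bar y$, resp.\ $y$, empties — solve the linear ODE in each phase, and compute the return map on the two-parameter family of switching states $\{x=\tfrac\nu J y,\ \bar y=0\}$. The content of (\ref{thm1condition}) is exactly that this return map is well defined and has an eigenray with expansion factor strictly greater than $1$: the phase durations must all be positive and the switches of the two components must interlock as described, never leaving the single-inner regime for a net-negative interval. This is the source of the lower bound on $a$, while the upper bound keeps $\vecrho\in\mC$ through (\ref{superqueueload}) and controls the residual inner-queue contents so that the cycle closes on its eigenray. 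Verifying these inequalities, and in particular recovering the precise algebraic bounds in (\ref{thm1condition}), is the main obstacle; the sliding-mode bookkeeping and the need to rule out spurious ``both components serve inner'' intervals (on which $\dot W=2a-2<0$) are where the delicacy lies.

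Finally I would transfer fluid divergence to transience of $\procQ$. Since the increments of $\procQ$ are uniformly bounded, the fluid machinery summarized in the Appendix applies, and from the divergent fluid trajectory one extracts a (piecewise-linear) function $\Phi$ with $\Phi\to\infty$ and uniform positive drift $\bE[\Phi(\vecQ(t+1))-\Phi(\vecQ(t))\mid\vecQ(t)=\vecq]\ge\epsilon$ on $\{\Phi\ge K\}$; a standard transience criterion for countable-state Markov chains then yields that $\procQ$ is transient. Since (\ref{superqueueload}) holds under (\ref{thm1condition}), we have $\vecrho\in\mC$, so the network is subcritical, completing the argument.
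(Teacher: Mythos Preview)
Your cycling picture is in the same spirit as the paper's argument, but the paper does \emph{not} pass to a fluid limit: it works directly with the Markov chain, using a large-deviation bound (Lemma~\ref{HighProb}) to control arrivals and a cycle proposition (Proposition~\ref{MainProp}) proved via six stochastic lemmas. The proposition shows that, from an initial state with $Q_\mB^\Sigma(0)=M$ and $\mB$ balanced, there is a stopping time $V\le T$ at which the roles of $\mA$ and $\mB$ have swapped and $Q_\mA^\Sigma(V)\ge\gamma M$ with $\gamma>1$; transience then follows by iterating the Strong Markov property and summing the exceptional probabilities. No Lyapunov function or fluid-to-transience criterion is invoked.

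Your proposal has two substantive gaps. First, the transfer step is not what the Appendix provides: the fluid machinery there runs only in the direction ``fluid stable $\Rightarrow$ positive recurrent''. Extracting from one divergent fluid trajectory a function $\Phi$ with uniform positive drift on $\{\Phi\ge K\}$ is not automatic; existing fluid-instability criteria require control over \emph{all} fluid limit points, not a single constructed solution. The paper sidesteps this entirely by proving the cycle inequality at the level of the chain itself.

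Second, your phase decomposition and your reading of the two bounds in (\ref{thm1condition}) are off. The relevant initial state has $\mB$ \emph{balanced}, i.e.\ $Q_{\mB_0}\approx\nu Q_{\mB_j}$, so over $[0,U]$ component $\mB$ is on the switching surface and splits its service between $\mB_0$ and the outer queues in sliding mode; this is neither ``serve-inner'' nor ``drain-outer'', so your four phases do not cover it, and the heuristic $\dot W=2a-1$ does not hold over the whole cycle. That is why the actual growth factor is $a/(1-a+a\nu/J)$ and the lower bound is $J/(2J-\nu)$, not $1/2$. Likewise the \emph{upper} bound on $a$ has nothing to do with $\vecrho\in\mC$ (that is the separate hypothesis (\ref{superqueueload})); it is exactly the condition (\ref{equationfor_a}) in the proof of Lemma~\ref{LemmaStopService} guaranteeing that, while $\mB$ is draining in sliding mode, $Q_{\mA_0}$ grows fast enough to overtake $\nu\max_j Q_{\mA_j}$ and capture all of $\mA$'s service. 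Without this, the interlocking you need (``exactly one inner queue served'') cannot be established, and your cycle does not close.
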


It is easy to choose $a$, $\nu$, and $J$ so that both (\ref{superqueueload})
and~\eqref{thm1condition} are satisfied.
For instance, one can choose
$a=7/12$, $\nu=6$, and $J=30$. (A simulation under these parameters is provided in Figure \ref{Fig:Sim1}.)
It follows that the MaxWeight policy is not in general maximally stable for single class {multihop} switched networks.
This result is proved in Section \ref{proof}.



\begin{figure}
	\centering
	\includegraphics[width=0.95\textwidth]{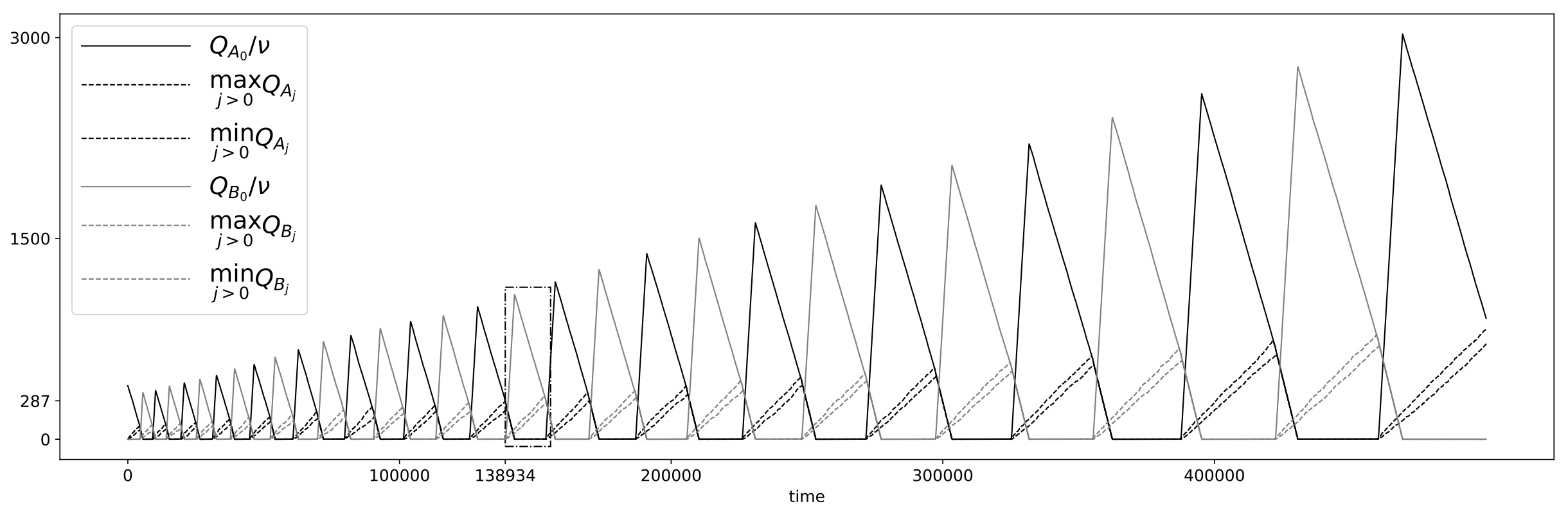}
	\caption{A simulation of the network described in Theorem \ref{thrm:MW}, for parameters
	$a=7/12$, $\nu=6$, and $J=30$. The initial value is $Q_{\mA_0}(0)=1722$, with all other queues empty.
	The rectangular box in the figure outlines a region that is presented in detail in Figure \ref{Fig:Sim1Cut}.
	\label{Fig:Sim1}}
\end{figure}

\subsection*{Conditions under which MaxWeight and weighted MaxWeight are stable.}
The example in Theorem \ref{thrm:MW} shows that  MaxWeight is not in general maximally stable for single class multihop switched networks.  Nevertheless,
it is maximally stable in certain more restrictive settings, such as for tandem single class switched networks.  This result is a consequence of Theorem \ref{thrm:p-MW}, which is a stability result for networks with general routing. Here, the MaxWeight policy is replaced by a \textit{weighted MaxWeight policy} whose weighting at each queue is given by the traffic intensity at that queue.

For this weighted MaxWeight policy, the optimization problem in (\ref{eq1}) is replaced by the optimization problem
\begin{align}
\label{eq1wtmaxwt}
&\text{maximize}\quad\sum_{j\in\mJ} Q_j \frac{\sigma_j}{\rho_j} \quad
\text{over} \quad  {\vecsigma} \in {\mS}\,.
\end{align}
The network is assumed to be an open network, with arbitrary \textit{mean routing matrix} ${\bf P} = (P_{ij}: i,j \in \mJ)$.  (More detail will be given in Section 2.)

%
\begin{theorem}
\label{thrm:p-MW}
Consider the weighted MaxWeight policy given by (\ref{eq1wtmaxwt}), for a single class switched network with an arbitrary set of schedules $\mS$ and arbitrary mean routing matrix $\bf P$.  If $\bm {\rho} \in \mC$, then the associated queueing network process $\bm{\mathcal{Q}}$ is positive recurrent.
\end{theorem}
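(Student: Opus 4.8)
The plan is to obtain positive recurrence through the fluid-model criterion summarized in the Appendix: it suffices to show that every fluid limit of $\procQ$ solves the associated fluid model and drains to the origin in a time that is uniform over unit initial conditions. First I would record the fluid equations. Writing $w_j := Q_j/\rho_j$ for the traffic-scaled queue lengths, the external arrival rates $\alpha_j$ satisfy the traffic equations $\alpha_j = \rho_j - \sum_i P_{ij}\rho_i$, and any fluid solution is Lipschitz with, at regular times $t$,
\[ \dot Q_j(t) = \alpha_j - \sigma_j(t) + \sum_{i\in\mJ} P_{ij}\,\sigma_i(t), \qquad \vecsigma(t)\in\langle\mS\rangle, \]
where $\vecsigma(t)$ maximizes $\sum_{j} w_j(t)\,\sigma_j$ over $\langle\mS\rangle$, the fluid form of the weighted MaxWeight rule (\ref{eq1wtmaxwt}).

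The natural potential for (\ref{eq1wtmaxwt}) is the separable quadratic $\tfrac12\sum_j Q_j^2/\rho_j$, whose gradient is exactly the weight vector $(w_j)$, so that in a single-hop network ($\bf P=0$) the policy performs steepest descent and, comparing against the feasible schedule $(1+\epsilon)\vecrho\in\langle\mS\rangle$ available since $\vecrho\in\mC$, one gets a drift bounded by $-\epsilon\sum_j Q_j$. In the multihop setting this cancellation is destroyed by the routing term $\sum_i\sigma_i\sum_j P_{ij}w_j$, which the policy does not control, and the quadratic in fact fails to be monotone along fluid trajectories. I would therefore work instead with the weighted maximum $L(\vecQ) := \max_{j\in\mJ} Q_j/\rho_j$, which tracks the worst relative congestion and is the member of the family $\sum_j\rho_j\,\phi(w_j)$ for which ``serve the largest $w_j$'' aligns with ``decrease the largest $w_j$''.

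The crux is to show $\tfrac{d^+}{dt}L(\vecQ(t)) \le -\delta$ for a fixed $\delta>0$ whenever $L(\vecQ(t))>0$. Let $J^\star(t) = \{\,j : w_j(t) = L(t)\,\}$ be the maximal set. Since every $w_j$ with $j\notin J^\star$ is strictly smaller, a structural lemma to be established is that the weighted MaxWeight maximizer devotes maximal feasible service to $J^\star$; along the maximal manifold the common value $\dot w_j$, $j\in J^\star$, equals $\big(\sum_{j\in J^\star}\dot Q_j\big)\big/\big(\sum_{j\in J^\star}\rho_j\big)$, whose numerator is $\sum_{j\in J^\star}\alpha_j - \sum_{j\in J^\star}\beta_j(\vecsigma)$ with net output rate $\beta_j(\vecsigma) = \sigma_j - \sum_i P_{ij}\sigma_i$. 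Subcriticality enters through $(1+\epsilon)\vecrho\in\langle\mS\rangle$, which furnishes a schedule whose net output out of $J^\star$ is $(1+\epsilon)\sum_{j\in J^\star}\alpha_j$, strictly exceeding the external input $\sum_{j\in J^\star}\alpha_j$; the aim is to transfer this slack to the realized maximizer and deduce $\sum_{j\in J^\star}\beta_j(\vecsigma) > \sum_{j\in J^\star}\alpha_j$, giving $\dot L\le-\delta$.

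I expect the main obstacle to be exactly this transfer. Two effects must be controlled: service of a queue in $J^\star$ can route work back into $J^\star$, so only the portion of service that leaves $J^\star$ drains the top level; and the maximizer may spill spare capacity onto non-maximal queues that feed $J^\star$. Handling these requires using that the network is open, so $J^\star$ always has positive leakage, together with the variational characterization of the maximizer to bound the routing-in term by the subcritical slack $\epsilon$. A second, technical obstacle is that at ties the instantaneous extreme schedules need not decrease every maximal coordinate; here I would invoke the sliding-mode (Filippov) description, in which the fluid solution remains on the maximal manifold via a balancing convex combination of schedules, and verify that this balancing schedule inherits the net-drain bound. Once $\dot L\le-\delta$ is in hand, $L(\vecQ(t))$ reaches $0$ by the uniform time $L(\vecQ(0))/\delta$, the fluid model is stable, and the Appendix criterion yields positive recurrence of $\procQ$.
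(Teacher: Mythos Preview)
Your proposed Lyapunov function $L(\vecQ)=\max_{j}Q_j/\rho_j$ does not work, and the second obstacle you flag is in fact fatal rather than merely technical. Take a three-queue tandem $1\to 2\to 3\to\omega$ with external arrivals only at queue~1 at rate $a$, unit job sizes, and the product schedule set $\mS=\{\sigma_j\le c_j,\ j=1,2,3\}$ with $c_1>c_2>a$. Then $\rho_j\equiv a$ and $\vecrho\in\mC$. With all queues positive and $J^\star=\{2\}$, the weighted MaxWeight maximizer sets $\sigma_j=c_j$ for every $j$ (all weights are positive and the coordinates are uncoupled), so $\dot Q_2=c_1-c_2>0$ and $\dot L>0$. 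There is no variational transfer that repairs this: the realized maximizer is the unique optimum, and your desired inequality $\sum_{j\in J^\star}\beta_j(\vecsigma)>\sum_{j\in J^\star}\alpha_j$ becomes $c_2-c_1>0$, which is false. The network is of course stable (queue~1 empties first, after which $\dot Q_2<0$), but $L$ is not monotone along the fluid trajectory, so the argument cannot close.

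The paper's Lyapunov function is neither the quadratic nor the max: it is the optimal value of the weighted MaxWeight problem itself, shifted by the total mass,
\[
h(t)\;=\;\max_{\vecsigma\in\mS}\sum_{j}Q_j(t)\Big(\frac{\sigma_j}{\rho_j}-1\Big)\;=\;\sum_{j}Q_j(t)\Big(\frac{D_j'(t)}{\lambda_j}-1\Big).
\]
Because $h$ is a pointwise maximum of linear functions of $\vecQ$, an envelope argument gives $h'(t)\le\sum_j Q_j'(t)\bigl(D_j'(t)/\lambda_j-1\bigr)$. The routing term that broke the quadratic is now handled by an algebraic identity: substituting the fluid equations and repeatedly using the traffic equations $\lambda_j=a_j+\sum_{j'}\lambda_{j'}P_{j'j}$ yields the exact bound
\[
h'(t)\le -\tfrac12\sum_{j,j'}\lambda_jP_{jj'}\Big(\tfrac{D_j'}{\lambda_j}-\tfrac{D_{j'}'}{\lambda_{j'}}\Big)^{\!2}
-\tfrac12\sum_{j}a_j\Big(\tfrac{D_j'}{\lambda_j}-1\Big)^{\!2}
-\tfrac12\sum_{j}\lambda_jP_{j\omega}\Big(\tfrac{D_j'}{\lambda_j}-1\Big)^{\!2}.
\]
Since $\vecrho\in\mC$ but $\bm D'(t)\in\partial\langle\mS\rangle$, some ratio $D_{j^*}'/\lambda_{j^*}$ is uniformly bounded away from $1$; following a path from an external source to $j^*$ forces one of the squared terms to be bounded below, giving $h'(t)\le -c$ whenever $\vecQ(t)\neq 0$. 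This is the missing idea: the Lyapunov function must incorporate the schedule, not just the queue lengths, so that the routing cross-terms collapse via the traffic equations into a Dirichlet-type form.
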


The dependence of the objective function in (\ref{eq1wtmaxwt}) on $\bm{\rho}$ restricts the applicability of this weighted version of MaxWeight unless $\bm{\rho}$ is first sampled or known.  
For this reason, weighted MaxWeight is not maximally stable in a true sense. Nonetheless, the weighting in (\ref{eq1wtmaxwt}) corresponds naturally to the underlying structure of MaxWeight and so indicates why unweighted MaxWeight is not in general the correct condition to ensure maximal stability.

Suppose now that the network is a \textit{tandem network}, {i.e., all jobs enter the network at a single queue, where $P_{j,j+1} =1$ for all $1\le j < |\mJ|$, and all jobs leave the network after completing service at the queue $|\mJ|$.}
Also, assume that the mean amount of work $m$ required to successfully serve a job at a queue is the same for all queues.  Then the traffic intensity $\rho_j$ is constant over all queues (irrespective of the choice of $\mS$) and so the optimization problem in (\ref{eq1wtmaxwt}) reduces to the unweighted MaxWeight optimization problem in (\ref{eq1}).  A direct application of Theorem \ref{thrm:p-MW} therefore implies the following result.
\begin{theorem}
\label{tandemtheorem}
Consider a tandem switched network with an arbitrary set of schedules $\mS$ and the same mean work per job required at each queue.  Then the MaxWeight policy given by (\ref{eq1}) is maximally stable for this switched network.
\end{theorem}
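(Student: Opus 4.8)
The plan is to derive Theorem \ref{tandemtheorem} as an immediate consequence of Theorem \ref{thrm:p-MW}, by showing that for a tandem network with equal mean work per job the weighted MaxWeight policy of (\ref{eq1wtmaxwt}) and the unweighted MaxWeight policy of (\ref{eq1}) are one and the same policy. The entire argument therefore reduces to verifying that the traffic intensity $\vecrho$ is constant across all queues, so that the weight $1/\rho_j$ appearing in (\ref{eq1wtmaxwt}) is a common positive constant that factors out of the objective without altering its set of maximizers.

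First I would compute $\vecrho$ from the traffic equations. Writing $\lambda$ for the external arrival rate into the head queue, $\alpha_j$ for the external arrival rate at queue $j$, and $\gamma_j$ for the effective (throughput) arrival rate at queue $j$, the flow-balance relations $\gamma_j = \alpha_j + \sum_{i\in\mJ} \gamma_i P_{ij}$ together with $\alpha_1 = \lambda$, $\alpha_j = 0$ for $j>1$, and $P_{j,j+1}=1$ for $1\le j < |\mJ|$ give $\gamma_1 = \lambda$ and $\gamma_{j} = \gamma_{j-1}$ for $j\ge 2$, hence $\gamma_j = \lambda$ for every $j\in\mJ$. Multiplying by the common mean work $m$ required per service yields $\rho_j = \lambda m =: \rho$ for all $j$, independent of the choice of $\mS$. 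This is the only place where the tandem structure and the equal-mean-work hypothesis enter.

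Given that $\rho_j \equiv \rho > 0$, the weighted objective in (\ref{eq1wtmaxwt}) becomes $\frac{1}{\rho}\sum_{j\in\mJ} Q_j \sigma_j$, a strictly positive scalar multiple of the unweighted objective in (\ref{eq1}). Consequently, for every queue-length vector $\vecQ$ the two optimization problems share the same set of optimal schedules, and any tie-breaking rule applied to one applies verbatim to the other; the two policies thus induce exactly the same queueing network process $\procQ$. Since the set $\mS$, and hence the subcritical region $\mC$, is common to both formulations, applying Theorem \ref{thrm:p-MW} with this $\vecrho$ shows that $\procQ$ is positive recurrent whenever $\vecrho\in\mC$, which is precisely the assertion that unweighted MaxWeight is maximally stable for the tandem network.

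I do not anticipate a genuine obstacle here, since Theorem \ref{thrm:p-MW} does the heavy lifting; the only step demanding care is the constancy of $\vecrho$, and even this is routine once the traffic equations are written down. The one subtlety worth stating explicitly is that the reduction requires $\rho_j$ to be both strictly positive and identical across queues: strict positivity guarantees that (\ref{eq1wtmaxwt}) is well defined, while identical values guarantee that rescaling the objective preserves its set of maximizers. Both properties hold under the stated hypotheses, so the theorem follows with essentially no additional work.
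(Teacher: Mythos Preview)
Your proposal is correct and is, in fact, exactly the derivation the paper sketches in the introduction just before stating Theorem~\ref{tandemtheorem}: constancy of $\rho_j$ along a tandem line with equal mean work collapses (\ref{eq1wtmaxwt}) to (\ref{eq1}), so Theorem~\ref{thrm:p-MW} applies directly.

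That said, the paper's \emph{formal} proof in Section~\ref{sec.re-entrant.lines} takes a different route. Rather than invoking Theorem~\ref{thrm:p-MW}, it proves Theorem~\ref{tandemtheorem} first and independently, via a fluid-model Lyapunov argument: one defines $h(t)=\sum_j Q_j(t)\bigl(D_j'(t)/a-1\bigr)$, uses the MaxWeight optimality to bound the left derivative, and then exploits the tandem structure to telescope the resulting sum into $-\tfrac{1}{2a}\sum_j (D_{j-1}'-D_j')^2$, which is bounded away from zero whenever $\bm Q\neq 0$ because $\bm D'$ lies on $\partial\langle\mS\rangle$ while $\bm\rho$ lies in the interior. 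The paper orders things this way deliberately: the tandem case is a transparent warm-up that isolates the key ideas (optimality inequality, telescoping, boundary-vs-interior gap) before the heavier algebra of Lemma~\ref{lem:technical} and the proof of Theorem~\ref{thrm:p-MW}. Your reduction is logically cleaner once Theorem~\ref{thrm:p-MW} is in hand, but it reverses the expository dependence and gives no insight into \emph{why} MaxWeight stabilizes the tandem line.
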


Since Theorem \ref{tandemtheorem} is easier to understand than Theorem \ref{thrm:p-MW}, we will first prove it directly and then prove Theorem \ref{thrm:p-MW}.  Both results are proved in Section \ref{sec:stable}.

Theorem \ref{thrm:p-MW} can be applied to the MaxWeight policy for general network topologies when the traffic intensity at all queues is the same. The MaxWeight policy is 
maximally stable in certain other settings as well.  An elementary case is when the network consists of parallel sequences of queues in tandem that do not interact, if one assumes that the mean required work is constant over queues along individual sequences.  
 Switched networks whose topology allows branching of routes (but no merging) are  more significant examples and are considered at the end of Section \ref{sec:stable}.

We point out that, although tandem networks are not reasonable models for communication networks, they are often applied in operations research to analyze production systems, such as silicon chip manufacturing. Also, routes with branching occur in distribution networks.

\section{Additional Notation and Model Description.}\label{sec:model}
Here, we introduce further notation needed to describe single class switched queueing networks.
We specify {the} queueing network equations, {define the} arrival rates $\bm{\lambda}$ and traffic intensities $\bm{\rho}$, and briefly discuss the subcritical region $\mathcal C$.
(Multiclass networks, which are networks where jobs at a queue can have statistically non-identical routing behavior, are described in Section \ref{sec:multiclass}.)\

We first specify the processes that describe how jobs arrive and are processed through a network.
The number of external arrivals at different queues are assumed to be independent of each other, and IID at different times at each queue $j$, with mean $a_j\in \bR_+$.
The size of each job at queue $j$ is assumed to be  geometrically distributed with parameter $p_j$: for each unit of work assigned at a given time to queue $j$ by the schedule $\mS$, a job has probability $p_j$ of its service being completed; when this occurs, the remaining work at the queue is devoted to the next job at the queue whereas, if service is not completed by the assigned work, service of the job resumes at the next unit of time according to the allocation by the schedule at that time.
(An important special case, which we consider in our counterexamples, is where each job has unit size, i.e., $p_j=1$.)

After being served at queue $j$, a job moves to queue $j'$ with probability $P_{jj'}$ and, with probability $P_{j\omega}$, the job will leave the network, i.e., 
$
P_{j \omega} = {1-}\sum_{j'\in \mathcal J} P_{jj'}, \, j\in \mathcal J.
$
The matrix $\bm{P}=(P_{jj'} : j,j'\in \mathcal J)$ denotes the \emph{mean routing matrix}; we assume that $(I-P)$ is invertible, i.e., the network is \emph{open}.
A switched queueing network is a \emph{single-hop network} if each job leaves the network immediately after completing service, i.e., $P_{j\omega}=1$ for all $j$; otherwise, it is a \emph{{multihop} network}. In this paper we will be principally interested in {multihop} networks.

The queue size process ${\procQ}=(\vecQ(t): t\in \bZ_+)$, with  $\vecQ(t)=(Q_j(t): j\in\mJ)$, evolves according to the following \textit{queueing network equations}. For $t\in \mathbb N$ and $j \in \mathcal J$,
\begin{subequations}\label{eq:QN}
\begin{align}
Q_j(t) &= Q_j(0) + A_j(t) -D_j(t)\, , \label{eq:QN1}
\\[.2em]
A_j(t) &= E_j(t) +\sum_{j'\in \mathcal J}\Phi_{j'j}(D_{j'}(t))\, , \label{eq:QN2}
\\[.1em]
{D_j(t) } &= {S_j( \Pi_j(t))}   \, .\label{eq:QN3}
\end{align}
\end{subequations}
In the above equations,
 $A_j(t)$ is the \emph{cumulative number of arrivals} at queue $j$ (both external and internal),
 with the \emph{cumulative number of external arrivals} at queue $j$, $E_j(t)$, being the sum of the first $t$ external arrivals at $j$, and $\Phi_{jj'}(d)$ being the \emph{cumulative number of jobs} routed from queue $j$ to $j'$ after $d$ departures from queue $j$. It is assumed that $(\Phi_{jj'}(d)-\Phi_{jj'}(d-1) : j'\in \mJ\cup \{ \omega\})$, $d\in \mathbb N$,  are IID zero-one random vectors with the $j'$th-coordinate taking value $1$ with probability $P_{jj'}$ for $j' \in \mJ\cup \{ \omega\}$. 
The random variable $D_j(t)$ is the\emph{ cumulative number of departures} from queue $j$.
	One has $A_j(0)=E_j(0)=D_j(0)=0$.
 The \emph{cumulative service} scheduled at queue $j$,
\begin{equation}\label{eq:PI}
\Pi_j(t) := \sum_{\tau=1}^{t-1} {\Big(} \pi_j(\vecQ(\tau)) {\wedge Q_j(\tau) \Big)},
\end{equation}
where $\vecpi(\vecQ(\tau))$ depends on the policy.  For instance, $\vecpi(\vecQ)$ might be given by the MaxWeight schedule defined in \eqref{eq1} or the weighted MaxWeight schedule defined in \eqref{eq1wtmaxwt}.
 The random variable $S_j(u)$ is the \emph{cumulative number of jobs served} after $u\in\mathbb Z_+$ units of service are devoted to queue $j$. Since jobs sizes are geometrically distributed,  $S_j(u)$ is a discrete time renewal process whose inter-increment times are geometrically distributed with mean $1/p_j$.

The equation
\eqref{eq:QN1} states that the queue size is given by the arrivals minus the departures,
\eqref{eq:QN2} states that $A_j$ is the sum of both external and internal arrivals, and
\eqref{eq:QN3} states that $D_j$ is the number of jobs $S_j$ with completed service at $j$ after the amount of service $\Pi_j$. 

When considering the counterexample in Figure \ref{Fig2}, we will denote the total queue size at each of the two components by
\[
Q^{\Sigma}_{\mA }(t)
=
\sum_{j=0}^J Q_{\mA_j }(t)
\qquad
\text{and}
\qquad
Q^{\Sigma}_{\mB }(t)
=
\sum_{j=0}^J Q_{\mB_j }(t)\,.
\]

The order in which jobs arrive and jobs are served over a given time unit needs to be specified. In order to rule out instantaneous service of jobs, we assume that jobs initially present are first served, after which jobs arrive at their new queues.  The allocation of service at time $t$ is governed by $\vecQ(t-1)$ for the policies we will consider. 

Under the above assumptions, the queue size process $\procQ$ is a discrete time countable state space Markov chain.
This paper investigates the positive recurrence and transience of this Markov chain.


The \emph{total arrival rate} $\veclambda = (\lambda_j : j\in\mJ)$ is given by
\begin{equation*}
	\veclambda = \veca + \veca P + \veca P^2 + ... = \veca (I-P)^{-1}
\end{equation*}
or, alternatively, is the solution of the \emph{traffic equations}
\begin{equation}\label{eq:Traffic}
\lambda_j = a_j + \sum_{j'\in \mathcal J} \lambda_{j'} P_{j'j}\, ,
\qquad
j\in \mathcal J
 .
\end{equation}
Because the mean service time for each arrival is $p^{-1}_j$, the \emph{traffic intensity at queue $j$} is
\begin{equation}\label{rel.rho.lambda.p}
	\rho_j :=  \frac{\lambda_j}{p_j}\, .
\end{equation}
The \textit{traffic intensity} (i.e., nominal load) $\bm{\rho}=(\rho_j :j\in\mJ)$ gives the long-term rate at which work destined for each queue arrives in the queueing network. 

We recall from the introduction that $\mS \subset \bZ^{|\mJ|}_+$ denotes the set of feasible schedules and $<\mS>$ denotes the convex closure of the set $\mS$. The subcritical region $\mC$ denotes the interior of the set $<\mS >$, and a policy is maximally stable if the queue size process $\bm{\mathcal Q}$ is positive recurrent whenever $\vecrho\in \mC$.


We briefly motivate the definition of maximally stable as follows:
For any $\vecrho\in \mC$, there exists a randomized policy whose mean service rate $\bar{\vecpi}$ satisfies $\vecrho < \bar{\vecpi}$ and whose corresponding switched queueing network is positive recurrent.  So, stability is possible for a given switched queueing network policy when $\vecrho\in \mC$.  On the other hand, whenever $\vecrho$ lies strictly outside $\mC$, the total workload in the network must increase linearly over time, and so the switched queueing network must be transient.



\section{Proof of Theorem \ref{thrm:MW}.}
 \label{proof} 
 \label{sec:proofs}
In this section, we prove Theorem \ref{thrm:MW}. 
Since the proof is somewhat involved, we recommend that on a first reading the reader skip this section and continue to Section \ref{sec:stable}.
%
%

\subsection{Main Setup.}
\label{sec:Thrm1}
%
To prove Theorem \ref{thrm:MW}, we employ Lemma \ref{HighProb} and Proposition \ref{MainProp}, which are stated below.  The lemma is an elementary large deviations estimate; most of the work in this section will consist of demonstrating the proposition.   Assuming both the lemma and proposition, we demonstrate Theorem \ref{thrm:MW} in this subsection, postponing until Subsection \ref{sec:PropProof} the proofs of Lemma \ref{HighProb} and Proposition \ref{MainProp}.  We first introduce terminology.

%

Whenever \eqref{thm1condition} is satisfied,  $\gamma$ can be chosen so that
\begin{equation}\label{gamma.condition}
1 < \gamma < \frac{a}{1-a+a \nu/J} \ .
\end{equation}
(Note that the bound $J/(2J - \nu ) < a$ in  \eqref{thm1condition} implies $1< a/(1-a + a \nu/J)$ .) 
For $a$, $J$, $\nu$, and $\gamma$ fixed, we choose 
$\epsilon \in (0, \epsilon_0)$, where $\epsilon_0$ is a small positive constant.
We choose $M$ so that
	\begin{align}
	M  \geq L/\epsilon \,,  \label{lw.bound.M}
	\end{align}
	where $L$ is a large positive constant.  In the proofs below, we will also introduce positive constants $\kappa_1,...,\kappa_4$; 
	the constants $\epsilon_0$, $L$, $\kappa_1,...,\kappa_4$  
	will depend only on the parameters $a$, $J$, $\nu$, and $\gamma$. 

 We assume the following conditions on the initial queue lengths $\vecQ(0)$:
\begin{subequations}\label{InitalCondition}
	\begin{align}
	Q^{\Sigma}_{\mB }(0) & = M \, ,\label{InitialB}
\\[.2em]
	Q^{\Sigma}_{\mA }(0) & \leq \epsilon M / \nu \, ,\label{InitialA}
\\[.15em]
 \left| Q_{\mB_0}(0) - \nu Q_{\mB_j}(0) \right|  &\leq \epsilon M \, , \text{ for } j=1,...,J \ ,\label{Gap}
	\end{align}
\end{subequations}
The displays in  \eqref{InitalCondition} state  that there are initially $M$ jobs at Component $\mathcal B$
and comparatively few jobs at Component $\mathcal A$, 
with the queues at Component $\mathcal B$ being approximately ``balanced'' (that is, by (\ref{Sfortheorem1}), any of the queues of Component $\mB$ might receive immediate service).

Lemma \ref{HighProb} states that the numbers of arrivals at different queues are approximately deterministic over longer intervals of time.
\begin{lemma}\label{HighProb}
	For any $\delta > 0$, there exist $\eta>0$ and a constant $C$ such that, for all $T \geq C$,
		\[
		\mathbb P \Bigg(\sup_{0\leq s\leq t\leq T}\left| A_j(t)-A_j(s) - \frac{a}{J} (t-s) \right| \leq \delta {T} \Bigg) \geq 1 - e^{-\eta T}\, ,
		\]
	for  $j\in\{\mA_1,...,\mA_J , \mB_1,...,\mB_J\}$.
\end{lemma}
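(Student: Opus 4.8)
The plan is to establish Lemma~\ref{HighProb} as a routine maximal large-deviations bound for a sum of i.i.d.\ Bernoulli random variables, so the first task is to identify the relevant arrival process. For $j \in \{\mA_1,\dots,\mA_J,\mB_1,\dots,\mB_J\}$, the routing in Figure~\ref{Fig2} sends jobs only $\mA_j \to \mB_0$ and $\mB_j \to \mA_0$, so no internally routed jobs ever enter these ``outer'' queues; in the notation of \eqref{eq:QN2} this means $\Phi_{j'j}\equiv 0$ for every $j'$, and hence $A_j(t) = E_j(t)$. By the external-arrival assumptions, the increments $A_j(t)-A_j(t-1)$ are therefore i.i.d.\ Bernoulli random variables with mean $a/J$. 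Writing $W(u) := A_j(u) - \tfrac{a}{J}u$ for the centered random walk, we have $A_j(t)-A_j(s)-\tfrac{a}{J}(t-s) = W(t)-W(s)$, and since $W(0)=0$ the triangle inequality gives
\[
\sup_{0\le s\le t\le T}\bigl|W(t)-W(s)\bigr| \le 2\max_{0\le u\le T}|W(u)| .
\]
Thus it suffices to bound $\mathbb{P}\bigl(\max_{0\le u\le T}|W(u)| > \tfrac{\delta}{2}T\bigr)$, which is the key reduction: it replaces the double supremum over $(s,t)$ by a one-sided running maximum that a pointwise Chernoff estimate can control.

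Next I would apply the exponential-martingale (Chernoff plus Doob) method. Since $W$ is a mean-zero random walk, for each $\theta>0$ the process $e^{\theta W(u)}$ is a nonnegative submartingale. The centered increment $\xi-\tfrac{a}{J}$ lies in the length-one interval $[-\tfrac{a}{J},\,1-\tfrac{a}{J}]$, so a direct computation (Hoeffding's lemma) gives $\mathbb{E}\,e^{\theta(\xi-a/J)} \le e^{\theta^2/8}$, whence $\mathbb{E}\,e^{\theta W(T)} \le e^{\theta^2 T/8}$. Doob's maximal inequality applied to $e^{\theta W(u)}$ then yields
\[
\mathbb{P}\Bigl(\max_{0\le u\le T} W(u) \ge \tfrac{\delta}{2}T\Bigr) \le e^{-\theta \delta T/2}\,\mathbb{E}\,e^{\theta W(T)} \le \exp\!\bigl(-\tfrac{\theta\delta}{2}T + \tfrac{\theta^2}{8}T\bigr).
\]
Optimizing the exponent over $\theta$ (the choice $\theta = 2\delta$) produces the bound $e^{-\delta^2 T/2}$, and applying the identical argument to $-W$ and summing gives $\mathbb{P}\bigl(\max_{0\le u\le T}|W(u)| \ge \tfrac{\delta}{2}T\bigr) \le 2e^{-\delta^2 T/2}$.

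Finally I would absorb the constant factor for large $T$: once $T \ge C := (4\log 2)/\delta^2$ one has $2e^{-\delta^2 T/2} \le e^{-\delta^2 T/4}$, so the lemma holds with $\eta = \delta^2/4$. Every outer queue carries the same $\mathrm{Bernoulli}(a/J)$ arrival law, so the estimate holds verbatim for each $j$ in the stated set. I do not expect a genuine obstacle, as the estimate is elementary; the only points meriting care are the reduction of the double supremum to a single running maximum, and the use of Doob's inequality in place of a cruder union bound over the $T$ time steps, so that no polynomial-in-$T$ prefactor appears and the decay remains cleanly exponential (a union bound would also succeed, at the cost of absorbing such a factor into $C$).
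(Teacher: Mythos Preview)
Your proof is correct and follows the same overall strategy as the paper: identify $A_j=E_j$ for the outer queues, reduce the double supremum over $(s,t)$ to a single running maximum via the triangle inequality, and then apply an exponential Chernoff-type bound. The one point of departure is in how the running maximum is controlled. The paper uses the cruder route you anticipate in your final paragraph: it takes a union bound over the (at most $T+1$) time steps, obtains a bound of order $Te^{-\eta T}$, and then absorbs the polynomial prefactor into a smaller $\eta$. Your use of Doob's maximal inequality for the submartingale $e^{\theta W(u)}$ is cleaner, yielding the exponential decay directly with the explicit rate $\eta=\delta^2/4$, whereas the paper's argument does not produce an explicit constant. Both approaches are standard and either suffices for the lemma as stated.
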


To prove Theorem \ref{thrm:MW}, 
we set
\begin{equation} \label{defofT}
T = 2M/(1-a)(1 - r_{\vecrho})
\end{equation}
and
$M' = \gamma M$,  
%
where $\gamma $ is assumed to satisfy (\ref{gamma.condition}), and assume $M$ to be sufficiently large so that $T \geq C$, where $C$ is as in Lemma \ref{HighProb}.
We designate by $G_M$ the non-exceptional set in Lemma \ref{HighProb} over all $ j\in\{\mA_1,...,\mA_J , \mB_1,...,\mB_J\}$, with $\delta$ replaced by $\epsilon b$:
\begin{equation}\label{Event}
G_M 
=
\left\{
	\sup_{ j\in\{\mA_1,...,\mA_J , \mB_1,...,\mB_J\} }
	\sup_{0\leq s\leq t\leq T}\left| A_j(t)-A_j(s) - {\frac{a}{J}} (t-s) \right| \leq \epsilon\, b   \, T
\right\}\, ,
\end{equation}
where $ b = (\gamma - 1)(1-a)(1 - r_{\vecrho})/6 \nu  J$. ($b$ is chosen for convenience, but needs to be small.) 
It follows immediately from Lemma \ref{HighProb} that
\begin{equation}  \label{GMbound}
\mathbb{P}(G_M)  \geq 1 - e^{-\eta T}\,,
\end{equation}
for a new choice of $\eta > 0$.

Proposition \ref{MainProp} states that, on $G_M$, there exists a stopping time $V \le T$ 
such that, at $V$, an approximate analog of the initial condition \eqref{InitalCondition} holds,  but with $M$ replaced by $M'$ and the roles of the components $\mathcal A$ and $\mathcal B$ reversed. In particular, the amount of work in the queueing network has increased by a multiplicative factor $\gamma$.

\begin{proposition}\label{MainProp} 
Assume (\ref{superqueueload}) and (\ref{thm1condition}), and assume \eqref{InitalCondition} with $M$ satisfying \eqref{lw.bound.M}.  Then, on the event $G_M$, 
	\begin{subequations}\label{Init}
		\begin{gather}
		Q^{\Sigma}_\mA  (V) \geq M',\label{Init1}
\\[.2em]
		Q^{\Sigma}_\mB  (V) \leq \epsilon M' / \nu , \label{Init2} 
\\[.12em]
	    \left| Q_{\mA_0}(V) - \nu Q_{\mA_j} (V) \right| \leq \epsilon M' ,\label{Init3}
		\end{gather}
\end{subequations}
for $j=1,...,J$, where $V$ is a stopping time satisfying $0 < V \leq T$ that will be defined in (\ref{eq:tildeT}), and $M'=\gamma M$ is defined above.  Moreover, for all times $0\leq t\leq V$,
	\begin{equation}\label{Init4}
	Q_\mA^\Sigma(t) + Q_\mB ^\Sigma(t) \geq \frac{1}{2} \frac{a}{a +\nu}M.
	\end{equation}
\end{proposition}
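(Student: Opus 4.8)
The plan is to obtain the uniform lower bound (\ref{Init4}) from work conservation over an initial window, and from the growth of component $\mA$ thereafter, the latter supplied by the trajectory analysis behind (\ref{Init1})--(\ref{Init3}). Write $W(t) := Q^{\Sigma}_{\mA}(t) + Q^{\Sigma}_{\mB}(t)$ for the total work and $w_0 := \tfrac12\tfrac{a}{a+\nu}M$ for the target. Since jobs are unit-sized and leave the network only after service at an inner queue $\mA_0$ or $\mB_0$, I record the balance $W(t) = W(0) + E(t) - D(t)$, where $E(t) \ge 0$ counts external arrivals and $D(t)$ counts departures from the network (through $\mA_0$ or $\mB_0$) up to time $t$. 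As $W(0) \ge Q^{\Sigma}_{\mB}(0) = M$ by (\ref{InitalCondition}), the first task is to bound the net drainage $D(t) - E(t)$ from above, and the second is to use the trivial bound $W(t) \ge Q^{\Sigma}_{\mA}(t)$ once $\mA$ has grown.

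For the first task I would prove a budget/flow bound on $D$. Over $[0,t]$, write $d_{\mA}, d_{\mB}$ for the service given to $\mA_0, \mB_0$ (equal to their departures, as $p_j=1$) and $s_{\mA}, s_{\mB}$ for the service at the outer queues of $\mA,\mB$. Summing the per-step constraints (\ref{Sfortheorem1}) gives $d_{\mA} + s_{\mA}/\nu \le t$ and $d_{\mB} + s_{\mB}/\nu \le t$, while $Q_{\mA_0},Q_{\mB_0}\ge 0$ together with the routing (each job reaching $\mA_0$ was first served at an outer $\mB$ queue, and symmetrically) give $d_{\mA} \le Q_{\mA_0}(0) + s_{\mB}$ and $d_{\mB} \le Q_{\mB_0}(0) + s_{\mA}$. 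Eliminating $s_{\mA}, s_{\mB}$ and adding yields $D(t) \le \big(Q_{\mA_0}(0)+Q_{\mB_0}(0)\big)/(\nu+1) + 2\nu t/(\nu+1)$. Combining this with $E(t) \ge 2at - O(\epsilon M)$, which holds on $G_M$ by (\ref{Event}), and with $Q_{\mA_0}(0)+Q_{\mB_0}(0) \le W(0) \le (1+\epsilon/\nu)M$, gives
\[
W(t) \ \ge\ \frac{\nu}{\nu+1}\,M \ -\ \Big(\tfrac{2\nu}{\nu+1}-2a\Big)\,t \ -\ O(\epsilon M).
\]
Because the upper bound on $a$ in (\ref{thm1condition}) implies $a < \nu/(\nu+1)$, the coefficient $\tfrac{2\nu}{\nu+1}-2a$ is a positive constant, so there is a threshold time $t_0 = \Theta(M)$ with $W(t) \ge w_0$ for all $t \le t_0$.

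It remains to treat $t \in [t_0, V]$ (if $V \le t_0$ the previous step already suffices). Here I would use $W(t) \ge Q^{\Sigma}_{\mA}(t)$ together with two facts from the phase analysis establishing (\ref{Init1}): that component $\mA$ has grown past the threshold, $Q^{\Sigma}_{\mA}(t_0) \ge w_0$, by the time the conservation window closes; and that $Q^{\Sigma}_{\mA}$ does not subsequently fall back below $w_0$ before reaching $M' = \gamma M$ at $V$. Since $t_0 = \Theta(M)$ and external arrivals alone deposit $\Theta(M)$ work into the outer $\mA$ queues over $[0,t_0]$, the first fact is very comfortable; the deliberately small constant $\tfrac12\tfrac{a}{a+\nu}$ in $w_0$ leaves ample slack to absorb the stochastic fluctuations permitted on $G_M$.

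The main obstacle is this handoff: showing that $\mA$ has accumulated at least $w_0$ before the conservation bound expires and that it stays above $w_0$ thereafter. This is precisely a statement about the \emph{growth} of $\mA$, which cannot be read off from work conservation alone --- it depends on how MaxWeight resolves the inner-versus-outer choice in (\ref{Sfortheorem1}) at each component, i.e., on the balance between $Q_{\mA_0}$ and $\nu Q_{\mA_j}$ (and symmetrically at $\mB$). Consequently the bound (\ref{Init4}) is most naturally proved alongside (\ref{Init1})--(\ref{Init3}), reusing the same phase decomposition, rather than as an independent corollary; the transient during which both inner servers drain the network simultaneously, where $W$ genuinely decreases, is the one interval requiring care.
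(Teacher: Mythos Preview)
Your conservation/flow-LP argument for the early window is valid, but it is working harder than necessary and the paper takes a noticeably simpler route. Instead of eliminating $s_\mA,s_\mB$ from the capacity constraints to bound total departures, the paper simply observes that component $\mB$ can shed at most $\nu$ jobs per time step (directly from (\ref{Sfortheorem1})), so $Q^\Sigma_\mB(t) \ge M - \nu t$ for all $t$. For the late window the paper uses exactly the ingredient you identify: once the phase analysis (their Lemma~\ref{LemmaStopService}) shows the outer $\mA$ queues are not served past time $\epsilon\kappa_2 M$, one has $Q^\Sigma_\mA(t) \ge a(t-\epsilon\kappa_2 M) - O(\epsilon M)$. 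Then
\[
Q^\Sigma_\mA(t) + Q^\Sigma_\mB(t) \ \ge\ \big[\,a t - O(\epsilon M)\,\big] \vee \big[\,M-\nu t\,\big] \ \ge\ \frac{aM}{a+\nu} - O(\epsilon M),
\]
the minimum of the max occurring at $t \approx M/(a+\nu)$. So there is no separate ``handoff'' to negotiate: the two crude linear bounds, one per component, are taken simultaneously and their maximum does the work. Your approach buys nothing extra here, while the paper's avoids the LP manipulation entirely.

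One small correction: the positivity of your drift coefficient $\tfrac{2\nu}{\nu+1}-2a$ is equivalent to $a<\nu/(\nu+1)$, which is exactly the subcriticality condition (\ref{superqueueload}), not the upper bound in (\ref{thm1condition}). And you are right that the growth of $\mA$ cannot be read off from work conservation alone; in both your outline and the paper's proof this is where Lemma~\ref{LemmaStopService} (only $\mA_0$ is served at component $\mA$ after an $O(\epsilon M)$ burn-in) is indispensable.
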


Both Lemma \ref{HighProb} and Proposition \ref{MainProp} are proved in Subsection \ref{sec:PropProof}.

The proof of Theorem \ref{thrm:MW}  is a straightforward consequence of (\ref{GMbound}) and Proposition \ref{MainProp}.

\smallskip

\begin{proof}[Proof of Theorem \ref{thrm:MW}.]
 $\vecrho \in \mathcal C$ follows from (\ref{superqueueload}). 

To show $\vecQ(t)$ is transient,
note that,
since $a,\nu$ and $J$ satisfy \eqref{thm1condition},
$\gamma$ can be selected so that \eqref{gamma.condition} holds. The constants $a,\nu$, $J$ and $\gamma$ determine $\epsilon_0$, $L$, and $\kappa_1,...,\kappa_4$, and $\epsilon$ and $M$ are selected so that $\epsilon \in (0, \epsilon_0)$
 and $M \geq L / \epsilon$.

Denote by $\mathcal I_M$ and $\mathcal I_{M'}$  the sets of vectors with queue lengths satisfying \eqref{InitalCondition} and \eqref{Init}, respectively.
	By  (\ref{GMbound}) and Proposition \ref{MainProp}, for any initial state 
	$\mathbf Q_0 := (Q_{\mA_0}(0),..., Q_{\mA_J} (0),Q_{\mB_0} (0),...,Q_{\mB_J} (0)) \in \mathcal I_M$, there exists a stopping time $V\leq T$ with
	\[
	\mathbb P_{\mathbf Q_0}
	\left(
		(Q_\mA (V),Q_\mB (V))
		\in \mathcal I_{ M'} \,,
	\,\,\,
	\min_{0 \leq t \leq V}\Big\{ Q_\mA ^\Sigma(t) + Q_\mB ^\Sigma(t) \Big\} \geq  \frac{1}{2} \frac{a}{a + \nu}M
	\right)
	\geq 1 - e^{-\eta T} .
	\]
 Iteration using the Strong Markov property implies that
	\[
	\mathbb P_{\mathbf Q_0} \left(   Q_\mA ^\Sigma(t) + Q_\mB ^\Sigma(t) <  \frac{1}{2} \frac{a}{a + \nu} M \quad \text{for some }t\ge 0 \right)
	\leq
	\sum_{k=0}^\infty e^{-\eta \gamma^k T}\, .
	\]
	(More careful iteration in fact shows that $Q_\mA ^\Sigma(t) + Q_\mB ^\Sigma(t) \rightarrow \infty$ as $t \rightarrow \infty$ off of the above exceptional set.)
	
Since the right-hand side of the last inequality is less than $1$ for sufficiently large $M$, there is positive probability that the queueing system is never empty when starting from any state in $\mathcal I_M$, such as in \eqref{InitalCondition}.
Since all states communicate, this implies the process $\vecQ(t)$ is transient.
\end{proof}

\subsection{Proofs of Lemma \ref{HighProb} and  Proposition \ref{MainProp}.}\label{sec:PropProof}  The proof of Lemma \ref{HighProb} employs elementary large deviation bounds.

\smallskip

\begin{proof}[Proof of Lemma \ref{HighProb}.]
For any $j\in \{ \mathcal A_1,..., \mathcal A_J, \mathcal B_1,..., \mathcal B_J \}$,
\begin{align} \label{expbound}
	\mathbb P \left(
	\sup_{0\leq s\leq t\leq T}
	\left| 
	A_j(t)-A_j(s) - \frac{a}{J} (t-s) 
	\right|
	> \delta T  
	\right) 
	& 
	\leq 
	\mathbb P \left( 
	\sup_{0\leq t\leq T}
	\left| 
	A_j(t) - \frac{a}{J} t
	\right| 
	> \delta T /2   
	\right)   \notag
	\\[.1em]
& 
	\leq \sum_{t=0}^T
	\mathbb P \left( 
	\left| 
	A_j(t) - \frac{a}{J} t
	\right| 
	> \delta T /2   
	\right) \,.  
\end{align}

Each term $A_j(t) - a t/J$ is the sum of $t$ i.i.d. random variables $X_i$, $i=1,\dots, t$, with mean $0$ and $\bE e^{\theta X_i}<\infty$, for $\theta > 0$.  
Setting $\Lambda(\theta)=\log \bE [ \exp\{\theta X\}]$, we note that $\Lambda(0)=0$ and $\Lambda'(0)=\mathbb E X=0$. So, $\alpha:=\delta \theta /2 - \Lambda(\theta) > 0$ for small enough $\theta >0$.  Choosing one such value of $\theta$ and using Markov's Inequality, it follows that
\begin{align} \label{Markov} \notag
\mathbb P\left(\sum_{i=0}^t X_i  > \delta T/2\right) 
& \le e^{  \delta \theta (t-T)/2} \bE \exp \left\{\theta \sum_{i=0}^t (X_i -\delta /2) \right\}  
\\[.2em]
& = e^{  \delta \theta (t-T)/2} \left( \bE e^{\theta (X_1 -\delta /2)}\right)^t 
= e^{  \delta \theta (t-T)/2} e^{-\alpha t}  \le e^{-\eta T}
\end{align}
for $\eta = \min\{\delta \theta /2 , \alpha\} >0$.	 The same argument and (\ref{Markov}) 
continue to hold when $X_i$ is replaced by $-X_i$.
Applying these two bounds to each term on the right-hand side of (\ref{expbound}), it follows that the left-hand side of  (\ref{expbound}) is at most $2T e^{-\eta T}$, and the lemma follows with a new choice of $\eta$.
\end{proof}

\smallskip


We now begin the demonstration of Proposition \ref{MainProp}.  
Set
\begin{equation}\label{eq:tildeT}
U
=
\min\big\{ t \geq 0 : Q_{\mB}^{\Sigma}(t)\leq \nu^2 \big\} \wedge T \,, \quad
V
=
\min
\big\{
t \geq U : Q_{\mA_0}(t) \le \, \nu \! \! \max_{j=1,...,J} Q_{\mA_j}(t)  
\big\} \wedge T \ ,
\end{equation}
where $T$ is chosen as above.
The stopping time $U$ can be thought of the first time at which Component $\mB$ ``is almost empty"; $V$ is the first time after this
 that a queue, other than $\mA_0$, might be served at Component $\mA$.  

The demonstration of (\ref{Init1})--(\ref{Init3}) and (\ref{Init4}) of Proposition  \ref{MainProp} employs six lemmas that track the flow of jobs through the network over the time interval $[0,V]$, on the event $G_M$.  The main steps are  as follows.

\smallskip

\begin{itemize}
\item 
 $U < T$ 
(Lemma \ref{LemmaEmpty}):
The number of jobs in Component $\mB$ at any time is bounded above by the sum of the number of jobs originally in Component $\mB$ and Queues $\mA_j$, $j=1,\ldots,J$, together with the number of arrivals in $\mA_j$,  $j=1,
\ldots,J$, up to this time, less the number of departures from $\mB$ by then.  Since the component $\mB$ is subcritical,  $U$ must occur by a computable time 
before $T$.

\smallskip

\item  
Up until time $U$, jobs arrive at  Queue $\mA_0$ from Queues $\mB_j$, $j=1,\dots,J$, at a rapid linear rate (Lemma \ref{A0 arrivals}):
Because Component $\mB$ is initially balanced and, by the MaxWeight policy, remains balanced, a fixed fraction of the service at Component $\mB$ occurs at the ``quick service" Queues $\mB_j$, $j=1,\ldots,J$.

\smallskip

\item 
For appropriate $\kappa_2 > 0$, 
$Q_{\mA_0} (t) > \nu \max_{j=1,...,J} Q_{\mA_j}(t)$
on $[U \wedge \epsilon \kappa_2   M , V)$ (Lemma \ref{LemmaStopService}): The inequality is immediate on $[U,V)$ from the definition of $V$, but we require the stronger lower bound.  This part employs Lemma \ref{A0 arrivals}, together with the lower rate at which jobs arrive at $\mA_j$, $j=1,\ldots,J$, from outside the network.
Note that the MaxWeight policy, together with the above inequality,  implies that, on $(U \wedge \epsilon \kappa_2   M , V]$, the Queues $\mA_j$, $j=1,\dots, J$ do not receive service.

\smallskip

\item
For all $t\in [0,T]$ and $j,j'=1,\ldots,J$, $|Q_{\mA_j}(t) - Q_{\mA_{j'}}(t) |$ is small (Lemma \ref{Lemon9}):  This is an elementary consequence of the MaxWeight policy.

\smallskip

\item
During $[U,V]$, there are few jobs in Component $\mB$ (Lemma \ref{LemmaStillEmpty}):  This is the case at time $U$.   Because of the MaxWeight policy and Lemma \ref{LemmaStopService}, there are no arrivals at Queue $\mB_0$ over $(U,V]$.  Moreover, the ``quick service" queues $\mB_j$, $j=1,\ldots,J$, serve jobs much faster than they arrive.  Note that (\ref{Init2})  of Proposition \ref{MainProp} is an immediate consequence of Lemma \ref{LemmaStillEmpty}.

\smallskip

\item
$V <T$ ((\ref{eq:Lem8A}) of Lemma \ref{Lemma8}):  
On $(U,V]$, Queue $\mA_0$ receives all of the service at Component $\mA$, which is greater than the rate at which jobs arrive along the route leading to $\mA_0$.  But the number of jobs at the other queues in $\mA$ is increasing linearly because of external arrivals.  Comparison gives an explicit upper bound on $V$ that implies $V < T$.  Note that $|Q_{\mA_0} (V) - \nu \max_{j=1,...,J} Q_{\mA_j}(V)|\le \nu$ is immediate from $V<T$ and the definition of $V$.  It follows from this and Lemma \ref{Lemon9} that (\ref{Init3}) of Proposition \ref{MainProp} is  satisfied.

\smallskip

\item
$\mQ^{\Sigma}_A(V) \ge M'$, where $M'$ is as in
Proposition \ref{MainProp} ((\ref{eq:Lem8B}) of  Lemma \ref{Lemma8}):
One can show that the above upper bound on $V$ is also the lower bound, up to a lower order term.  The linear growth in the number of jobs at the Queues $\mA_j$, $j=1.\ldots,J$, over times $t \in (\epsilon \kappa_2 M,V]$, together with $Q_{\mA_0} (V) = \nu \max_{j=1,...,J} Q_{\mA_j}(V)$, implies $\mQ^{\Sigma}_A(V) \ge M'$, which  is the claim in (\ref{Init1}) of Proposition \ref{MainProp}.  

\smallskip

\item
For all $t\in [0,V]$, the number of jobs in the network is at least $aM/2(a +\nu)$, i.e., (\ref{Init4}) of Proposition \ref{MainProp} holds (proof of  Proposition \ref{MainProp}):  The number of jobs at the Queues $\mA_j$, $j=1,\dots,J$ increases linearly over $(\epsilon \kappa_2 M, V]$.  But the number of jobs at Component $\mB$ does not decrease from its initial value $M$ at more than an explicit linear rate.  The minimum of the sum will be at least
 $aM/2(a +\nu)$.
\end{itemize}

\begin{figure}
	\centering
	\includegraphics[width=\textwidth]{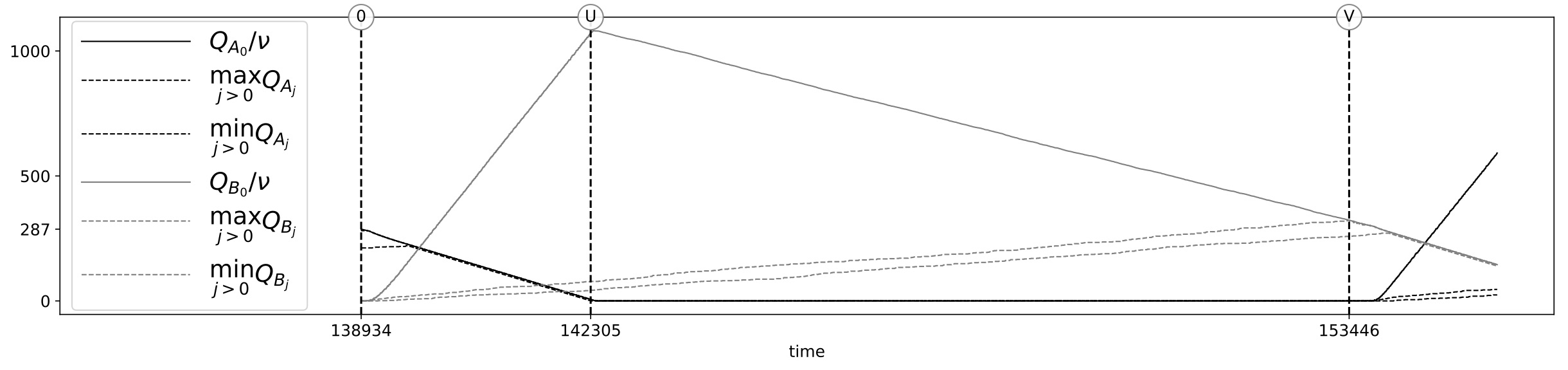}
	\caption{A snapshot of the network simulated in
Figure \ref{Fig:Sim1}
(with parameters
	$a=7/12$, $\nu=6$, and $J=30$), starting at time $138,934$ and labeled by $0$ at the top of the diagram. The stopping times $U$ and $V$, in  Proposition \ref{MainProp} and the accompanying outline of its proof, correspond to times $142,305$ and $153,446$ in the simulation in Figure \ref{Fig:Sim1}.  Note that, by (\ref{gamma.condition}), Proposition \ref{MainProp} is valid for $\gamma \in (1, 35/32)$, and the growth in the total number of jobs from time $138,934$ to time $153,446$ in Figure \ref{Fig:Sim1} and the diagram is relatively small.
	\label{Fig:Sim1Cut}}
\end{figure}


%



\medskip

Figure \ref{Fig:Sim1Cut} illustrates the evolution over a single cycle of the switched queueing network in Figure \ref{Fig:Sim1}}.  The relative sizes of the queues at and between times $0$, $U$, and $V$ can be compared with the bounds given in the above outline.

We first demonstrate Lemma \ref{LemmaEmpty}.

\begin{lemma}\label{LemmaEmpty}
Assume (\ref{superqueueload}), and  assume \eqref{InitalCondition} with $M$ satisfying \eqref{lw.bound.M}.
On the event $G_M$,
\begin{equation}\label{T.tilde.bounds}
 U
 \leq
 \frac{
 	2 M
 }{
 1-a(1+{1}/{\nu})
 }\, .
\end{equation}
In particular, $U < T$.
\end{lemma}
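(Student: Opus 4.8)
The plan is to track not the raw occupancy $Q^{\Sigma}_{\mB}$ but the \emph{weighted} occupancy
\[
W_{\mB}(t) := Q_{\mB_0}(t) + \frac{1}{\nu}\sum_{j=1}^{J} Q_{\mB_j}(t),
\]
which is exactly the quantity matching the capacity constraint (\ref{Sfortheorem1}) on Component $\mB$. Writing a conservation identity, the increase of $W_{\mB}$ up to time $t$ equals the weighted arrivals into $\mB$ minus the weighted departures from $\mB$. Arrivals into $\mB$ consist of external arrivals at $\mB_1,\dots,\mB_J$ (each of weight $1/\nu$) together with internal arrivals at $\mB_0$ (each of weight $1$), the latter being exactly the departures $\sum_{j=1}^J D_{\mA_j}(t)$ routed in from Component $\mA$; departures from $\mB$ contribute the weighted service $\sigma_{\mB_0}+\tfrac1\nu\sum_{j\ge1}\sigma_{\mB_j}$ actually executed at each step. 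The point of using $W_{\mB}$ rather than $Q^{\Sigma}_{\mB}$ is that its weighted arrival rate is $r_{\vecrho}<1$ while its weighted service is essentially $1$ per step, producing a strictly negative drift; the unweighted count does not have this property because MaxWeight may remove up to $\nu$ jobs at a time.

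First I would bound the weighted arrivals on $G_M$. Since $D_{\mA_j}(t)\le Q_{\mA_j}(0)+E_{\mA_j}(t)$ for each $j\ge1$ (the queue content being nonnegative, and $\mA_j$ receiving only external input), the weighted internal input to $\mB_0$ is at most $\sum_{j\ge1} Q_{\mA_j}(0) + \sum_{j\ge1} E_{\mA_j}(t)$, which by (\ref{InitialA}) and the definition (\ref{Event}) of $G_M$ is at most $\epsilon M/\nu + at + J\epsilon b T$. The weighted external input is $\tfrac1\nu\sum_{j\ge1} E_{\mB_j}(t)\le \tfrac1\nu(at+J\epsilon bT)$. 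Adding these and using $W_{\mB}(0)\le Q^{\Sigma}_{\mB}(0)=M$, the total weighted arrivals over $[0,t]$ are at most $r_{\vecrho}\,t$ plus an error of order $\epsilon M$; here the choices of $b$ in (\ref{Event}) and $T$ in (\ref{defofT}) make $J\epsilon bT$ a fixed multiple of $\epsilon M$.

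The heart of the argument is a lower bound of $1$ per step on the weighted departures while $Q^{\Sigma}_{\mB}>\nu^2$. Because the $\mB$-constraint in (\ref{Sfortheorem1}) decouples from $\mA$, MaxWeight serves Component $\mB$ by comparing the value per unit of weighted capacity of $\mB_0$, namely $Q_{\mB_0}$, with that of the fast queues, namely $\nu\max_{j\ge1}Q_{\mB_j}$: it either serves one job from $\mB_0$ (weighted service $1$) or serves greedily up to $\nu$ jobs from the fast queues (weighted service $\min(\nu,\sum_{j\ge1}Q_{\mB_j})/\nu$). The weighted service can fall below $1$ only if the fast queues are selected yet $\sum_{j\ge1}Q_{\mB_j}<\nu$ (the alternative failure, serving an empty $\mB_0$, is excluded once $Q^{\Sigma}_{\mB}>0$); but then $\sum_{j\ge1}Q_{\mB_j}\le\nu-1$, so $\max_{j\ge1}Q_{\mB_j}\le\nu-1$ and $Q_{\mB_0}\le\nu\max_{j\ge1}Q_{\mB_j}\le\nu(\nu-1)$, forcing $Q^{\Sigma}_{\mB}\le\nu^2-1<\nu^2$. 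Hence at every step $\tau\le U$, where $Q^{\Sigma}_{\mB}(\tau-1)>\nu^2$ by (\ref{eq:tildeT}), the weighted service is exactly $1$, so the weighted departures over $[0,t]$ equal $t$ (up to an $O(1)$ timing correction) for $t\le U$.

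Combining the two bounds, on $G_M$ we obtain $W_{\mB}(t)\le M-(1-r_{\vecrho})t+O(\epsilon M)$ for all $t\le U$. Since $W_{\mB}\ge \tfrac1\nu Q^{\Sigma}_{\mB}$, every $t<U$ satisfies $\nu<W_{\mB}(t)$, and solving the resulting inequality gives $t<M\bigl(1+O(\epsilon)\bigr)/(1-r_{\vecrho})$; choosing $\epsilon_0$ small enough that the $O(\epsilon)$ term is below $1$ yields $U\le 2M/(1-r_{\vecrho})=2M/\bigl(1-a(1+1/\nu)\bigr)$, which is (\ref{T.tilde.bounds}). Finally $U<T$ is immediate, since $T=2M/\bigl((1-a)(1-r_{\vecrho})\bigr)$ and $1-a<1$. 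I expect the genuinely delicate step to be the departure bound of the third paragraph: verifying that MaxWeight's schedule on Component $\mB$ is exactly the binary comparison described, and that integrality then forces full weighted utilization \emph{precisely} on the region $Q^{\Sigma}_{\mB}>\nu^2$, is where the threshold $\nu^2$ in (\ref{eq:tildeT}) is used, and some care is required with the tie-breaking rule in (\ref{eq1}) and with the timing convention for arrivals and services.
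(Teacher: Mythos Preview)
Your approach is essentially identical to the paper's: both track the weighted occupancy $Q_{\mB_0}(t)+\tfrac1\nu\sum_{j\ge1}Q_{\mB_j}(t)$, bound the weighted arrivals using (\ref{InitialA}) and the event $G_M$ exactly as you do, assert that weighted departures equal $t$ on $[0,U]$, and combine to get (\ref{T.tilde.bounds}); the paper in fact justifies the departure step more tersely than you do, writing only that $Q^{\Sigma}_{\mB}(t)>0$ implies $D_{\mB_0}(t)+\tfrac1\nu\sum_j D_{\mB_j}(t)=t$. One small remark on your more detailed justification: the implication ``weighted service $<1$ forces $\sum_{j\ge1}Q_{\mB_j}<\nu$'' tacitly assumes the tie-breaking spreads the $\nu$ units across the maximal fast queues rather than concentrating them on one, so strictly speaking you only get $\max_{j\ge1}Q_{\mB_j}<\nu$; but this is a negligible point (the resulting $O(1)$ cumulative deficit is swallowed by the factor $2$ in (\ref{T.tilde.bounds})), and the paper's own one-line assertion glosses over the same issue.
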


\begin{proof}
 On $t\le U$, $Q^{\Sigma}_{\mB}(t) > 0$, and so
	$D_{\mB_0}(t) + \frac{1}{\nu}\sum_{j=1}^J D_{\mB_j}(t) = t$.
On the other hand, arrivals at Queue $\mB_0$ are due to departures from the Queues $\mA_j$, and so
\begin{equation} \label{displayinlemma2}
A_{\mB_0}(t) 
=
\sum_{j=1}^J D_{\mathcal A_j}(t)
	\leq \sum_{j=1}^J Q_{\mA_j}(0) + \sum_{j=1}^J  A_{\mA_j}(t) 
	\leq \epsilon \frac{M}{\nu}  + at + \epsilon b  J  T  
\quad \text{ on } G_M \, ,
\end{equation}
with \eqref{InitialA} and (\ref{Event}) being employed in (\ref{displayinlemma2}). 
It follows from \eqref{InitalCondition} and the above two inequalities that, on $G_M$, for $t\le U$,
\begin{align}
\frac{1}{\nu} Q^{\Sigma}_{\mB}(t)
&\leq
	Q_{\mB_0}(t) + \frac{1}{\nu} \sum_{j=1}^J Q_{\mB_j}(t)
\leq 
	M + A_{\mB_0}(t) +  \frac{1}{\nu} \sum_{j=1}^J A_{\mB_j}(t) - t \notag 
\\[.2em]
&\leq 
	M 
	+ \Big( at + \epsilon b J  T\Big)
	+ \frac{J}{\nu} \Big(\frac{a}{J} t + \epsilon b  T\Big) - t \, . \label{eq:QBupper}
\end{align}
Setting $t = U$ and noting that $Q^{\Sigma}_{\mB}(U) \ge 0$, it follows from (\ref{eq:QBupper}) that
\begin{equation}\label{constraint.lemma.3}
U
\leq
	\frac{(1+{\epsilon}/{\nu})M + (1+{1}/{\nu})  \epsilon b   J  T }{1-a(1+{1}/{\nu})} 
\end{equation}
since, by (\ref{superqueueload}), $a(1+{1}/{\nu}) < 1$.  For small enough $\epsilon > 0$,  
the numerator of (\ref{constraint.lemma.3}) is less that $2M$, which implies (\ref{T.tilde.bounds}). 
 Also by (\ref{superqueueload}), the denominator equals $1 - r_{\vecrho} > 0$, and so $U < T$ follows from
$T = 2M/(1-a)(1 - r_{\vecrho})$.
\end{proof}

\medskip

Lemma \ref{A0 arrivals} shows that, for $t\le U$, the number of arrivals at {Queue} $\mA_0$ is bounded below by $(1-a)\nu t$, up to smaller order terms.  

\begin{lemma}\label{A0 arrivals}  
Assume (\ref{superqueueload}), and assume \eqref{InitalCondition} with $M$ satisfying \eqref{lw.bound.M}.
On the event $G_M$,
there exists a constant $\kappa_1 > 0$ such that, for $t\leq U$,
\begin{equation} \label{lemma3eq}
A_{\mA_0} (t)
\geq\nu (1 - a ) \left(1 - 
  \frac{\nu^2}{\nu^2+J}  \right)t
- \epsilon  \kappa_1 M  \, .
\end{equation}
\end{lemma}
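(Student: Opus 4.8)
The quantity to control is $A_{\mA_0}(t)$, the cumulative number of arrivals at $\mA_0$, which by the routing in Figure \ref{Fig2} equals the total service completed at the fast queues of Component $\mB$, i.e.\ $A_{\mA_0}(t)=\sum_{j=1}^J D_{\mB_j}(t)$. The plan is to convert a lower bound on this sum into an upper bound on $D_{\mB_0}(t)$. As noted in the proof of Lemma \ref{LemmaEmpty}, for $t\le U$ the component $\mB$ is nonempty, so its service budget is saturated at every step, giving $D_{\mB_0}(t)+\tfrac1\nu\sum_{j=1}^J D_{\mB_j}(t)=t$ and hence $A_{\mA_0}(t)=\nu\bigl(t-D_{\mB_0}(t)\bigr)$. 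It therefore suffices to prove $D_{\mB_0}(t)\le \tfrac{\nu^2+aJ-\nu a}{\nu^2+J}\,t+O(\epsilon M)$; the asserted bound then follows after discarding the nonnegative term $\tfrac{\nu^2 a}{\nu^2+J}t$ and absorbing the error into $\epsilon\kappa_1 M$, with $\kappa_1$ depending only on $a,J,\nu,\gamma$.

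The key input is a \emph{balance} lower bound: on $G_M$ one has, for all $t\le U$,
\[
Q_{\mB_0}(t)\ \ge\ \nu\max_{1\le j\le J}Q_{\mB_j}(t)-O(\epsilon M)\ \ge\ \frac{\nu}{J}\sum_{j=1}^J Q_{\mB_j}(t)-O(\epsilon M),
\]
the second inequality being free since the maximum dominates the average. To prove the first inequality I would track the deviation $\Delta(t):=\nu\max_{1\le j\le J}Q_{\mB_j}(t)-Q_{\mB_0}(t)$ and argue it stays $O(\epsilon M)$. The MaxWeight rule compares the weight $Q_{\mB_0}$ of serving $\mB_0$ against $\nu\max_j Q_{\mB_j}$ of serving the largest fast queue: whenever $Q_{\mB_0}\le\nu\max_j Q_{\mB_j}$ (i.e.\ $\Delta\ge 0$) a fast queue is served, which weakly decreases $\max_j Q_{\mB_j}$, while $Q_{\mB_0}$ only accumulates arrivals; hence $\Delta$ is nonincreasing on $\{\Delta\ge 0\}$, so the deviation is self-correcting and cannot grow. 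The only ways $\Delta$ can become positive are (i) the initial imbalance permitted by \eqref{Gap}, of size at most $\epsilon M$, and (ii) an overshoot at a step crossing from $\Delta<0$ to $\Delta\ge 0$, whose size is bounded by one service increment $O(\nu)$ (together with an $O(\nu^2)$ edge case when the largest fast queue holds fewer than $\nu$ jobs, which can occur only while $\nu\max_j Q_{\mB_j}<\nu^2$ and so keeps $\Delta$ itself $O(\nu^2)$). Since $M\ge L/\epsilon$, all of these are $O(\epsilon M)$, giving the claim.

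With the balance in hand, the remaining step is routine flow bookkeeping, all on $G_M$ for $t\le U$. Conservation at the fast queues reads $\sum_{j=1}^J D_{\mB_j}(t)=\sum_{j=1}^J Q_{\mB_j}(0)+\sum_{j=1}^J E_{\mB_j}(t)-\sum_{j=1}^J Q_{\mB_j}(t)$, where $G_M$ forces $\sum_{j=1}^J E_{\mB_j}(t)=at+O(\epsilon M)$ (the fast $\mB$ queues receive only external arrivals). Combined with the saturation identity this expresses $\sum_{j=1}^J Q_{\mB_j}(t)$ linearly in $D_{\mB_0}(t)$. Conservation at $\mB_0$ gives $D_{\mB_0}(t)=Q_{\mB_0}(0)+A_{\mB_0}(t)-Q_{\mB_0}(t)$, into which I substitute the balance lower bound on $Q_{\mB_0}(t)$, the estimate $A_{\mB_0}(t)\le at+O(\epsilon M)$ from \eqref{displayinlemma2}, and the initial balance $Q_{\mB_0}(0)=\tfrac{\nu}{J}\sum_{j=1}^J Q_{\mB_j}(0)+O(\epsilon M)$ from \eqref{Gap}. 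This yields the single linear inequality $D_{\mB_0}(t)\,\tfrac{\nu^2+J}{J}\le \tfrac{aJ+\nu^2-\nu a}{J}\,t+O(\epsilon M)$, which rearranges to the desired bound on $D_{\mB_0}(t)$ and hence, via $A_{\mA_0}(t)=\nu(t-D_{\mB_0}(t))$, to \eqref{lemma3eq}.

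I expect the balance step, not the bookkeeping, to be the main obstacle: one must show that the $O(\epsilon M)$ slack survives uniformly over the whole interval $[0,U]$, which has length of order $M$, so that discrete jumps and external arrivals neither accumulate nor amplify the initial imbalance. The monotonicity of $\Delta$ on $\{\Delta\ge 0\}$ is what rules this out, and making the overshoot and edge-case estimates precise is the delicate part of the argument.
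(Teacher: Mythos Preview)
Your overall plan is sound and your bookkeeping in the last paragraph is correct: granted the balance inequality $Q_{\mB_0}(t)\ge \tfrac{\nu}{J}\sum_{j}Q_{\mB_j}(t)-O(\epsilon M)$, the flow identities combine exactly as you describe to give $D_{\mB_0}(t)\le \tfrac{aJ-a\nu+\nu^2}{\nu^2+J}\,t+O(\epsilon M)$, and then $A_{\mA_0}(t)=\nu(t-D_{\mB_0}(t))$ yields \eqref{lemma3eq}. The gap is in the balance step itself.

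Your claim that, whenever $\Delta(t)=\nu\max_j Q_{\mB_j}(t)-Q_{\mB_0}(t)\ge 0$, serving the maximal fast queue ``weakly decreases $\max_j Q_{\mB_j}$'' is false: the fast queues $\mB_1,\dots,\mB_J$ receive external arrivals, and a non-served fast queue that was tied (or within one) of the maximum can overtake after receiving an arrival. Thus $\max_j Q_{\mB_j}$ can increase by $1$ at such a step, so $\Delta$ is \emph{not} step-wise nonincreasing on $\{\Delta\ge 0\}$, and the monotonicity argument does not close. The balance bound you want is in fact true on $G_M$, but proving it uniformly over $[0,U]$ would require first showing that the fast $\mB$-queues stay within $O(\epsilon M)$ of one another (the Component-$\mB$ analog of Lemma~\ref{Lemon9}), and then arguing via the average $\tfrac{1}{J}\sum_j Q_{\mB_j}$ rather than the max. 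That is additional work you have not supplied.

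The paper avoids tracking $\Delta$ over time altogether. For fixed $t\le U$ it takes $S$ to be the \emph{last} time up to $t$ at which $\mB_0$ is served. At that single instant MaxWeight forces $\nu\max_j Q_{\mB_j}(S)\le Q_{\mB_0}(S)$, and since $\mB_0$ is not served on $(S,t]$ one has $Q_{\mB_0}(S)\le Q_{\mB_0}(t)+1$. Every fast queue has therefore served at least $\min_j Q_{\mB_j}(0)-\max_j Q_{\mB_j}(S)$ jobs by time $S$, giving
\[
A_{\mA_0}(t)\ \ge\ J\Big[\min_j Q_{\mB_j}(0)-\tfrac{1}{\nu}\big(Q_{\mB_0}(t)+1\big)\Big].
\]
Writing $Q_{\mB_0}(t)=Q_{\mB_0}(0)+A_{\mB_0}(t)-D_{\mB_0}(t)$ with $D_{\mB_0}(t)=t-A_{\mA_0}(t)/\nu$, together with \eqref{Gap} and \eqref{displayinlemma2}, produces the self-referential inequality
\[
A_{\mA_0}(t)\ \ge\ \frac{J}{\nu}\Big[(1-a)t-\frac{A_{\mA_0}(t)}{\nu}\Big]-O(\epsilon M),
\]
which rearranges directly to \eqref{lemma3eq}. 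This one-shot use of the time $S$ replaces your uniform-in-$t$ balance control and is what makes the argument short.
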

\begin{proof}
We first demonstrate the inequalities (\ref{eq:Service}), (\ref{step1Delta}), and (\ref{longline}),  which will be used to show (\ref{longinequality}).  The inequality (\ref{longinequality}) will then be rearranged to complete the proof.

We note that, since Component $\mathcal B$ is not empty over $[0,U]$ and the departure of a job at any Queue $\mB_1,..., \mB_J$ corresponds to an arrival at Queue $\mA_0$, 
\begin{equation}
\label{eq:Service}
D_{\mB_0}(t) + \frac{1}{\nu} A_{\mA_0}(t) 
= D_{\mB_0}(t) + \frac{1}{\nu} \sum_{j=1}^J D_{\mB_j}(t)
= t \ , \quad  \text{for } t \leq U\,.
\end{equation}


Let $S$ be the last time up to time $t$ at which the Queue $\mB_0$ is served, that is, 
$S = \max\{u \leq t : D_{\mB_0}(u+1) = 1 + D_{\mB_0}(u) \}$.
By the MaxWeight policy, 
\begin{equation}\label{eq.upp.max}
	\max_{j=1,...,J} \nu Q_{\mB_j}(S) 
	\leq Q_{\mB_0}(S) 
	\leq Q_{\mB_0}(S+1) + 1 
	\leq Q_{\mB_0}(t) + 1\ ,
\end{equation}
with the last inequality following from the definition of $S$.  Consequently,
\begin{equation}\label{step1Delta}
	\Delta := \min_{j=1,...,J} Q_{\mB_j}(0) - \max_{j=1,...,J} Q_{\mB_j}(S) \geq  \min_{j=1,...,J} Q_{\mB_j}(0) - \frac{1}{\nu} (Q_{\mB_0}(t) + 1) \ .
\end{equation}

The inequality 
\begin{equation}
\label{longline}
  A_{\mB_0}(t)
  =  \sum_{j=1}^J D_{\mA_j}(t)
  \leq Q^{\Sigma}_{\mA}(0) + \sum_{j=1}^J A_{\mA_j}(t) 
  \leq \epsilon \frac{M}{\nu} + a t + \epsilon  b J  T  
\end{equation}
follows by applying 
 \eqref{eq:QN1}, and then the initial condition \eqref{InitialA} together with our restriction to the event $G_M$.

Employing the above inequalities, we obtain
\begin{align}
\label{longinequality}
\notag
A_{\mA_0}(t)
&\geq J \Delta \\  \notag
  & \geq \frac{J}{\nu}  \Big[  
  \min_{j=1,...,J} \nu Q_{\mB_j}(0)
  - Q_{\mB_0}(0)
  +Q_{\mB_0}(0)
   - Q_{\mB_0}(t) - 1 \Big] \\
  &\geq\frac{J}{\nu} \Big[ 
  -\epsilon M
   - A_{\mB_0}(t) + D_{\mB_0}(t) - 1 \Big] 
\notag 
\\[.25em]
  &\geq \frac{J}{\nu} \Bigg[ -\epsilon M - \Big( \epsilon \frac{M}{\nu} + a t + \epsilon b J   T \Big) + \bigg( t - \frac{A_{\mA_0}(t)}{\nu} \bigg) -1  \Bigg] \, .
\end{align}
The first inequality in (\ref{longinequality}) follows from 
the observation that at least $\Delta$ jobs have been served by time $S\le t$ at each of the $J$ queues in Component $\mB$, the second inequality follows from
\eqref{step1Delta},
the third inequality follows from \eqref{eq:QN1} and \eqref{Gap}, and the
 fourth inequality follows from \eqref{eq:Service} and (\ref{longline}).
 
Notice by together collecting smaller order terms, we can write \eqref{longinequality} as a follows
 \[
 A_{\mathcal A_0} (t) 
 \geq 
 \frac{J}{\nu}
 \left[
 (1-a)t 
 - \frac{A_{\mathcal A_0}(t)}{\nu}
 - \epsilon \kappa M
 \right] \, ,
 \]
where $\kappa >0$ is a constant that does not depend on $\epsilon$ or $M$. (Recall that $T = 2M/(1-a)(1 - r_{\vecrho})$.) Combining the terms for $A_{\mathcal A_0}(t)$ gives 
\begin{align*}
 A_{\mathcal A_0} (t)  
&
 \geq 
 \frac{\nu J}{\nu^2 +J}
 \left[
 (1-a) t - \epsilon \kappa M
 \right]
\\
&
=\nu (1-a) t \left( 1 - \frac{\nu^2}{\nu^2 + J } \right) - \epsilon \kappa_1 M 	\, ,
\end{align*}
where $\kappa_1 =  \nu J \kappa /(\nu^2 +J)$. This gives the required bound in $A_{\mathcal A_0}(t)$.
\end{proof}

\medskip

The following lemma asserts that, over a time interval of interest to us, only Queue $\mathcal A_0$ is served at Component $\mathcal A$. 
\begin{lemma} \label{LemmaStopService} 
Assume (\ref{superqueueload}) and  \eqref{thm1condition}, and assume  \eqref{InitalCondition} with $M$ satisfying \eqref{lw.bound.M}.
On the event $G_M$,
there exists a sufficiently large $\kappa_2$ such that,
 for $t \in [U \wedge \epsilon \kappa_2   M , V)$,
	\begin{equation} \label{displayforlemm4}
	Q_{\mA_0} (t)
	>
	\nu \max_{j=1,...,J} Q_{\mA_j}(t)\, .
	\end{equation}
In particular, on $(U \wedge \epsilon \kappa_2   M , V]$,   only Queue $\mathcal A_0$ is served at Component $\mathcal A$.
\end{lemma}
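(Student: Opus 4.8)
The plan is to split the interval $[U\wedge\epsilon\kappa_2 M,V)$ into the two pieces $[U,V)$ and $[U\wedge\epsilon\kappa_2 M,U)$ and handle them separately. On $[U,V)$ the inequality $Q_{\mA_0}(t)>\nu\max_{j}Q_{\mA_j}(t)$ is immediate from the very definition of $V$ in (\ref{eq:tildeT}) as the first time at or after $U$ at which this strict inequality fails. Hence all the real work lies in the earlier piece $[U\wedge\epsilon\kappa_2 M,U)$, which is nonempty only when $\epsilon\kappa_2 M<U$; on it $t<U$, so Component $\mathcal B$ is nonempty and Lemma \ref{A0 arrivals} applies.

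For the quantitative estimate on $[\epsilon\kappa_2 M,U)$, restricted to $G_M$, I would bound $Q_{\mA_0}(t)$ from below and $\nu\max_j Q_{\mA_j}(t)$ from above. For the lower bound I use $Q_{\mA_0}(t)=Q_{\mA_0}(0)+A_{\mA_0}(t)-D_{\mA_0}(t)\ge A_{\mA_0}(t)-t$, where $D_{\mA_0}(t)\le t$ because the constraint (\ref{Sfortheorem1}) forces $\sigma_{\mA_0}\le 1$; combined with Lemma \ref{A0 arrivals} this gives $Q_{\mA_0}(t)\ge\big(\tfrac{\nu J(1-a)}{\nu^2+J}-1\big)t-\epsilon\kappa_1 M$. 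For the upper bound, $Q_{\mA_j}(t)\le Q_{\mA_j}(0)+A_{\mA_j}(t)$, and the initial condition (\ref{InitialA}) together with the definition of $G_M$ in (\ref{Event}) yields $\nu\max_j Q_{\mA_j}(t)\le\epsilon M+\tfrac{\nu a}{J}t+\nu\epsilon b T$. Since $T$ is a fixed multiple of $M$ by (\ref{defofT}) and $b$ is a constant, every additive term is $O(\epsilon M)$, so subtracting gives
\[
Q_{\mA_0}(t) - \nu\max_{j=1,\dots,J} Q_{\mA_j}(t) \;\ge\; c\,t - \epsilon\kappa_2' M,
\qquad
c := \frac{\nu J(1-a)}{\nu^2+J} - 1 - \frac{\nu a}{J},
\]
for a constant $\kappa_2'$ depending only on $a,J,\nu,\gamma$. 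The coefficient $c$ has the transparent reading ``arrival rate into $\mA_0$ minus the service rate $1$ minus the growth rate of $\nu Q_{\mA_j}$.''

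The crux is to show $c>0$, and I expect this algebraic identification to be the main obstacle, since it is the single place where the precise upper bound on $a$ in (\ref{thm1condition}) is used. Placing $c$ over the common denominator $J(\nu^2+J)$ produces the numerator $\nu J^2-\nu^2 J-J^2-a\nu(J^2+\nu^2+J)$, so that $c>0\iff a<\frac{\nu J^2-\nu^2 J-J^2}{\nu(J^2+\nu^2+J)}$; a short computation shows the right-hand side equals exactly $1-\frac{(J+\nu)(J+\nu^2)}{\nu(J^2+J+\nu^2)}$, which is precisely the upper bound imposed on $a$ in (\ref{thm1condition}). Hence $c>0$ under the hypotheses. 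With $c>0$ in hand I choose $\kappa_2>\kappa_2'/c$, so that for $t\ge\epsilon\kappa_2 M$ we have $c\,t\ge c\epsilon\kappa_2 M>\epsilon\kappa_2' M$ and the displayed difference is strictly positive; this completes the estimate on $[\epsilon\kappa_2 M,U)$ and hence (\ref{displayforlemm4}) on all of $[U\wedge\epsilon\kappa_2 M,V)$.

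Finally, for the ``in particular'' clause I would observe that the MaxWeight objective restricted to Component $\mathcal A$, maximized over schedules obeying (\ref{Sfortheorem1}), attains the value $\max\{Q_{\mA_0},\,\nu\max_j Q_{\mA_j}\}$: the constraint permits $\sigma_{\mA_0}\le 1$ and $\sum_j\sigma_{\mA_j}\le\nu$, and forces $\sum_j\sigma_{\mA_j}=0$ whenever $\sigma_{\mA_0}=1$, so the linear objective is maximized at a vertex, namely either by serving $\mA_0$ alone or by devoting all $\nu$ units to a single $\mA_j$ of largest $Q_{\mA_j}$. Consequently, whenever $Q_{\mA_0}(t)>\nu\max_j Q_{\mA_j}(t)$ holds, every optimal schedule serves only $\mA_0$, which gives the stated conclusion on $(U\wedge\epsilon\kappa_2 M,V]$.
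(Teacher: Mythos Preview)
Your proof is correct and follows essentially the same approach as the paper: split the interval at $U$, invoke the definition of $V$ on $[U,V)$, and on $[\epsilon\kappa_2 M,U)$ compare the linear growth of $Q_{\mA_0}$ (via Lemma~\ref{A0 arrivals} and $D_{\mA_0}(t)\le t$) against that of $\nu\max_j Q_{\mA_j}$ (via \eqref{InitialA} and $G_M$), reducing the comparison of slopes to exactly the upper bound on $a$ in \eqref{thm1condition}. Your treatment is slightly more explicit than the paper's in carrying out the algebraic identification $c>0\iff a<1-\frac{(J+\nu)(J+\nu^2)}{\nu(J^2+J+\nu^2)}$ and in spelling out the choice $\kappa_2>\kappa_2'/c$, but the structure and ingredients are the same.
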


Recall that service at time $t$ is governed by $\vecQ(t-1)$.

\smallskip

\begin{proof}[Proof of Lemma \ref{LemmaStopService}.]  The last statement in the lemma follows from (\ref{displayforlemm4}) and the MaxWeight policy.  

To prove (\ref{displayforlemm4}), note that (\ref{displayforlemm4}) follows automatically from the definition of $V$ for $t\in [U,V)$, and so it suffices to demonstrate (\ref{displayforlemm4}) for $t\in [\epsilon \kappa_2 M, U)$.   First note that, by
		Lemma \ref{A0 arrivals},
		\[
A_{\mA_0} (t)
\geq\nu (1 - a ) \left(1 - 
  \frac{\nu^2}{\nu^2+J}  \right)t
- \epsilon  \kappa_1 M \quad	\text{for all } t\leq U	\,.
\]
Since
$
		D_{\mA_0}(t) \leq  t  \, ,
$
it follows that
\begin{equation}\label{LemBound1}
	Q_{\mA_0}(t)
	=
	Q_{\mA_0}(0)
	+
	A_{\mA_0}(t)
	-
	D_{\mA_0}(t)
\geq
	\left(\nu(1-a)   -
\nu(1-a) \frac{\nu^2}{\nu^2+J} -1\, 
\right) t  - \epsilon\kappa_1 M\,,
\end{equation}
for all $t\leq U$.  On the other hand, by \eqref{InitialA},  on the event $G_M$,
\begin{equation}\label{LemBound2}
	\nu Q_{\mA_j}(t) \leq (a \nu /J)t + \epsilon (M + b \nu T)\,  ,	
\end{equation}
for $ j=1,...,J$ 
	 and  $t \leq T $.

To demonstrate (\ref{displayforlemm4}), it therefore suffices to show that
 the right-hand side of \eqref{LemBound1} is greater than the right-hand side of \eqref{LemBound2} for $t\in [\epsilon \kappa_2 M, U)$, for $\kappa_2$ chosen sufficiently large.  It suffices to compare the coefficients of $t$ in these two expressions, that is, to show
\begin{equation} \label{equationfor_a}
\nu(1-a)    -
\nu(1-a) \frac{\nu^2}{\nu^2+J} -1 \, 
> \frac{a \nu }{J}\, .
\end{equation}
After some calculation, one can check that (\ref{equationfor_a}) is equivalent to the upper bound for $a$ in (\ref{thm1condition}), which completes the proof of the lemma.  (With additional work, one can show that $\epsilon \kappa_2M \le U$ on $G_M$, by properly quantifying $\kappa$, $\kappa_1$, and $\kappa_2$.)
\end{proof}

\medskip

 Lemma \ref{Lemon9} bounds the queue size differences for $\mA_1$,...,$\mA_J$ up to time $V$.  Because of the MaxWeight optimization, this difference will be 
 bounded above by the corresponding difference in arrivals, which is elementary to bound.

\begin{lemma}\label{Lemon9}
Assume \eqref{InitalCondition}.  On the event $G_M$, 
\[
\max_{j,j'=1,...,J}   \max_{0\leq t \leq T}|Q_{\mA_j}(t) - Q_{\mA_{j'}}(t) |
\leq
\epsilon M/\nu + 3\epsilon bT \, .  
\]
\end{lemma}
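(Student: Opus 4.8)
The plan is to bound the difference $|Q_{\mathcal{A}_j}(t) - Q_{\mathcal{A}_{j'}}(t)|$ for any pair $j,j' \in \{1,\dots,J\}$ by tracking separately the contributions from arrivals and departures. Since arrivals at each $\mathcal{A}_j$ are external and governed by $G_M$, the difference in cumulative arrivals $|A_{\mathcal{A}_j}(t) - A_{\mathcal{A}_{j'}}(t)|$ is small. The subtler point is the difference in cumulative departures (equivalently service), which is controlled by the MaxWeight policy: because the weights $Q_{\mathcal{A}_j}$ enter the objective in \eqref{eq1}, the policy tends to serve whichever of these queues is currently largest, preventing any one of them from running away from the others. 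I would make this precise by observing that whenever $\mathcal{A}_j$ is served but $\mathcal{A}_{j'}$ is not at some time $\tau$, MaxWeight forces $Q_{\mathcal{A}_j}(\tau-1) \ge Q_{\mathcal{A}_{j'}}(\tau-1)$ (up to the schedule structure in \eqref{Sfortheorem1}).

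First I would set $\Delta_{jj'}(t) := Q_{\mathcal{A}_j}(t) - Q_{\mathcal{A}_{j'}}(t)$ and decompose its increment over a single time step into an arrival part and a service part. Using \eqref{eq:QN1}, we have $\Delta_{jj'}(t) = \Delta_{jj'}(0) + \bigl(A_{\mathcal{A}_j}(t) - A_{\mathcal{A}_{j'}}(t)\bigr) - \bigl(D_{\mathcal{A}_j}(t) - D_{\mathcal{A}_{j'}}(t)\bigr)$. The initial condition is controlled by \eqref{Gap} and \eqref{InitialA}, which together imply $|\Delta_{jj'}(0)|$ is at most of order $\epsilon M/\nu$. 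On $G_M$, the arrival difference is bounded by $2\epsilon b T$ since each term deviates from $(a/J)(t-s)$ by at most $\epsilon b T$. The key monotonicity claim is that the service difference cannot make $\Delta_{jj'}$ grow beyond the arrival-driven fluctuation: at any time when MaxWeight would increase $\Delta_{jj'}$ by serving $\mathcal{A}_j$ (while not serving $\mathcal{A}_{j'}$), the queue $\mathcal{A}_j$ must already be the larger of the two, so such service acts to restore balance rather than amplify the gap.

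The main obstacle, and where I would spend the most care, is converting the MaxWeight optimality into a clean pathwise inequality on $\Delta_{jj'}$. The complication is that service at these queues is not independent — the schedule constraint \eqref{Sfortheorem1} ties the service of all of $\mathcal{A}_1,\dots,\mathcal{A}_J$ together with $\mathcal{A}_0$, and service of $\mathcal{A}_0$ competes with the bulk queues. One must argue that whenever the difference $\Delta_{jj'}$ would otherwise exceed the accumulated arrival discrepancy, the MaxWeight objective strictly prefers a schedule that serves the currently-larger queue, driving the difference back down; this is a standard ``self-balancing'' argument for MaxWeight but requires attention to the discrete (integer-valued, single-step) dynamics and to ties. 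I would formalize it by showing that $\max_{j,j'}\Delta_{jj'}(t)$ can increase only when driven by the arrival term, so that $\max_{j,j'}|\Delta_{jj'}(t)|$ stays within $|\Delta_{jj'}(0)|$ plus twice the maximal arrival fluctuation, yielding the bound $\epsilon M/\nu + 3\epsilon b T$ after absorbing a single unit-service discretization term into the constant. The factor $3$ (rather than $2$) in the stated bound presumably accounts for this extra discretization slack together with the initial-gap contribution, so I would track constants loosely and verify that they combine into the claimed form.
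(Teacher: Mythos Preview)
Your approach is essentially the same as the paper's: bound the initial gap by \eqref{InitialA}, bound the arrival discrepancy by $2\epsilon bT$ on $G_M$, and use MaxWeight's self-balancing to control the departure contribution. One wording slip: serving $\mathcal{A}_j$ \emph{decreases} $\Delta_{jj'}=Q_{\mathcal{A}_j}-Q_{\mathcal{A}_{j'}}$, not increases it; what you mean is that service of $\mathcal{A}_{j'}$ (which would push $\Delta_{jj'}$ up) only occurs when $\Delta_{jj'}\le 0$, so service never enlarges $|\Delta_{jj'}|$.

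The paper formalizes the self-balancing step slightly more cleanly than your running-maximum idea: it fixes $t$ and a pair with $Q_{\mathcal{A}_j}(t)>Q_{\mathcal{A}_{j'}}(t)$, takes $\tau$ to be the \emph{last} time before $t$ with $Q_{\mathcal{A}_j}(\tau)\le Q_{\mathcal{A}_{j'}}(\tau)$ (or $\tau=0$), notes that on $(\tau,t]$ queue $\mathcal{A}_{j'}$ is strictly smaller and hence receives no service, so $Q_{\mathcal{A}_{j'}}$ evolves purely by arrivals there while $Q_{\mathcal{A}_j}$ is bounded above by its arrivals. This reduces the bound to $(\nu+1+\epsilon M/\nu)+2\epsilon bT$, with the $\nu+1$ absorbed into the extra $\epsilon bT$ for large $T$ --- exactly the discretization slack you anticipated. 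Your plan would work too; the last-crossing-time device just avoids having to argue about increments of a maximum step by step.
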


\begin{proof}
We show that the above bound holds at any time $t$ and any pair of queues, $\mA_j$ and $\mathcal A_{j'}$. 
Without loss of generality, assume $Q_{\mA_j}(t) > Q_{\mA_{j'}}(t)$. 
Let $\tau$ denote the last time before time $t$ such that $Q_{\mA_j}(\tau) \leq  Q_{\mA_{j'}}(\tau)$; if no such time exists, set $\tau=0$. Then
\begin{equation}\label{Q_Tau}
	Q_{\mA_j}(\tau+1) - Q_{\mA_{j'}}(\tau+1) \leq  \nu + 1 + \epsilon M/\nu\, ,
\end{equation}
where the term $\epsilon M/ \nu$ is due to the initial condition \eqref{InitialA}, for the case where $\tau = 0$.

From time $\tau+1$ until time $t$, Queue $\mA_j$ has strictly more jobs than $\mA_{j'}$, and so only Queue $\mA_j$ can receive service. Therefore,
\begin{align}\label{Q_st}
Q_{\mA_j}(t) &\leq Q_{\mA_j}(\tau+1)+ A_{\mA_j}(t)-A_{\mA_{j}}(\tau+1) \,,
\\[.2em]
Q_{\mA_{j'}}(t) &= Q_{\mA_{j'}}(\tau+1)+ A_{\mA_{j'}}(t)-A_{\mA_{j'}}(\tau+1)\,. \nonumber
\end{align}
Applying \eqref{Event}, \eqref{Q_Tau}, and \eqref{Q_st}, it follows that 
\begin{align*}
	&
	\left| Q_{\mA_j}(t) - Q_{\mA_{j'}}(t) \right|  \\
	 &
	\le Q_{\mA_j}(\tau+1) - Q_{\mA_{j'}}(\tau+1)
	+ \Big| A_{\mA_j}(t)-A_{\mA_{j}}(\tau+1)\Big|
	+ \Big| A_{\mA_{j'}}(t)-A_{\mA_{j'}}(\tau+1)  \Big| 
\\[.1em]
	& \le
\nu +1 + \epsilon M/\nu + 2 \epsilon  b T \le \epsilon M/\nu + 3\epsilon bT
\end{align*}
for large $T$, from which the lemma follows.
\end{proof}

\medskip

Lemma \ref{LemmaStillEmpty} shows that Component $\mB$ is approximately empty for $t \in [U,V]$.

\begin{lemma}\label{LemmaStillEmpty}
Assume (\ref{superqueueload}) and \eqref{thm1condition}, and assume \eqref{InitalCondition} with $M$ satisfying \eqref{lw.bound.M}. On the event $G_M$,
\begin{equation}
	Q^{\Sigma}_\mB (t)
	\leq
	\nu^3  +  \epsilon bJ T
\quad \text{ for } t \in [U,V]
	\ . \label{QBSum.upper.bound}
\end{equation}
\end{lemma}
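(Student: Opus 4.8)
The plan is to control the two contributions to $Q^{\Sigma}_{\mB}$ separately: the internal-arrival queue $Q_{\mB_0}$, and the quick-service sum $W(t):=\sum_{j=1}^J Q_{\mB_j}(t)$. By the definition of $U$ in (\ref{eq:tildeT}) together with Lemma \ref{LemmaEmpty} (so that $U<T$ and the bound is attained), we have $Q^{\Sigma}_{\mB}(U)\le\nu^2$, and hence in particular both $Q_{\mB_0}(U)\le\nu^2$ and $W(U)\le\nu^2$. First I would dispose of $Q_{\mB_0}$. By Lemma \ref{LemmaStopService}, only Queue $\mA_0$ is served in Component $\mA$ throughout $(U,V]$, so there are no departures from the Queues $\mA_j$, $j\ge1$; since arrivals at $\mB_0$ equal the departures from these queues (as in (\ref{longline})), there are no arrivals at $\mB_0$ on $(U,V]$. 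Consequently $Q_{\mB_0}$ is nonincreasing on $[U,V]$, giving $Q_{\mB_0}(t)\le Q_{\mB_0}(U)\le\nu^2$ there.

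The main work is to bound $W$. The point is that, because $Q_{\mB_0}(t)\le\nu^2$, the MaxWeight comparison at Component $\mB$ — serve $\mB_0$ (weight $Q_{\mB_0}$) versus serve the quick queues (weight $\nu\max_{j}Q_{\mB_j}$, all $\nu$ units being placed on the largest queue) — favors the quick queues whenever $\max_j Q_{\mB_j}\ge\nu$, since then $\nu\max_j Q_{\mB_j}\ge\nu^2\ge Q_{\mB_0}$ and exactly $\nu$ jobs are removed. In particular, whenever $W(t)\ge J\nu$ we have $\max_j Q_{\mB_j}\ge W(t)/J\ge\nu$ (here $\nu<J$ is used), so the quick queues are served at the full rate $\nu$. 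Since the external arrival rate into the quick queues is $a<1<\nu$, this produces a strict negative drift above the level $J\nu$. I would make this precise with a last-exit argument: fix $t\in(U,V]$ and let $s$ be the last time in $[U,t]$ with $W(s)\le J\nu$, which is well defined since $W(U)\le\nu^2\le J\nu$. On $(s,t]$ one has $W>J\nu$, hence full service at each step after $s+1$, so the departures from the quick queues over $(s,t]$ are at least $\nu(t-s-1)$, while the external arrivals there are at most $a(t-s)+\epsilon b J T$ by summing the bound in (\ref{Event}) over $j=1,\dots,J$. Writing $W(t)=W(s)+(\text{arrivals})-(\text{departures})$ then gives
\[
W(t)\le J\nu+(a-\nu)(t-s)+\nu+\epsilon b J T\le J\nu+\nu+\epsilon b J T .
\]

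Combining the two bounds yields $Q^{\Sigma}_{\mB}(t)=Q_{\mB_0}(t)+W(t)\le\nu^2+(J+1)\nu+\epsilon b J T$ for all $t\in[U,V]$; absorbing these $O(1)$ constants (all depending only on $\nu$ and $J$) gives the stated bound $\nu^3+\epsilon b J T$ in (\ref{QBSum.upper.bound}). This already delivers (\ref{Init2}) of Proposition \ref{MainProp}: the $\nu^3$ term is $O(1)$, while $\epsilon b J T=\epsilon(\gamma-1)M/3\nu$ by the choices of $T$ and $b$, and both are at most $\epsilon M'/\nu=\epsilon\gamma M/\nu$ once $M$ is large. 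The hard part is the bound on $W$: one must rule out a build-up of the quick queues during the initial sub-phase of $(U,V]$ in which $Q_{\mB_0}$ is still large enough to occasionally preempt quick service. The excursion (last-exit) argument from the level $J\nu$, combined with the uniform arrival control available on $G_M$, is exactly what converts the step-wise negative drift into the desired uniform-in-$t$ upper bound.
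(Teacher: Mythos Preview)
Your argument is correct in substance and uses the same core mechanism as the paper (negative drift of the quick-service queues above a threshold, made rigorous by a last-exit argument), but the organization differs. The paper does not split $Q^{\Sigma}_{\mB}$ into $Q_{\mB_0}$ and $W$; it works directly with the total, uses the weaker threshold $Q^{\Sigma}_{\mB}\ge\nu$, and then corrects for the at most $\nu^2$ time units that may be spent serving $\mB_0$ by adding a $(\nu-1)\nu^2$ term to the drift bound. Your decomposition is arguably cleaner: by raising the threshold on $W$ to $J\nu$ you force $\max_j Q_{\mB_j}\ge\nu+1$, which makes the MaxWeight comparison \emph{strict} (no tie-breaking ambiguity) and guarantees that the selected quick queue actually holds $\ge\nu$ jobs, so exactly $\nu$ depart. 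This sidesteps the ``stolen service'' bookkeeping that the paper needs.

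One genuine slip: your final constant $\nu^2+(J+1)\nu$ cannot in general be ``absorbed'' into $\nu^3$. Since the standing hypothesis is $\nu<J$, one typically has $\nu^2+(J+1)\nu>\nu^3$ (e.g.\ $\nu=6$, $J=30$ gives $222>216$). So you have proved
\[
Q^{\Sigma}_{\mB}(t)\le \nu^2+(J+1)\nu+\epsilon b J T,
\]
not the exact display \eqref{QBSum.upper.bound}. As you correctly observe, this is harmless for the application: the constant is $O(1)$ in $M$ and $\epsilon$, and \eqref{Init2} only needs $Q^{\Sigma}_{\mB}(V)\le\epsilon\gamma M/\nu$, which follows just as well from your bound for large $M$. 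Just don't claim you have reproduced the stated constant.
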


\begin{proof}
By definition, $Q_{\mB_0}(U) \le Q_{\mB}^{\Sigma}(U) \le \nu^2$. On the other hand, by Lemma \ref{LemmaStopService}, Queues $\mA_1,...,\mA_J$ receive no service over the time interval $(\epsilon \kappa_2  M, V]$, and so
Queue $\mB_0$ has no arrivals then.
Hence, with the exception of at most $\nu^2$ units of service at Queue $\mB_0$, all service at component $\mB      $ over $(\epsilon \kappa_2  M, V]$ is devoted to  Queues $\mB_1,...,\mB_J$.

Suppose that $Q^{\Sigma}_{\mB}(u)\leq \nu^2$
at a given time $u \in [\epsilon \kappa_2  M, V)$.
	On $G_M$, the arrivals to Queues $\mathcal B_1, ..., \mathcal B_J$ are bounded as in \eqref{Event}, and jobs are served at rate $\nu$ there. 
Together with the previous paragraph, this implies that, for any $s \le V-u$ chosen so that
$Q^{\Sigma}_{\mB} (t) \ge \nu$ for all $t \in [u,u+s)$,
\begin {equation} \label{boundonQsigma}
Q^{\Sigma}_{\mB}
(u+s)
\leq
\nu^2 + (a - \nu)s + (\nu - 1) \nu^2 + \epsilon b  JT\, 
\leq 
\nu^3  + \epsilon b  JT\,,
\end{equation}
since $a < \nu$.
(The $(\nu - 1) \nu^2$ term accounts for the at most $\nu^2$ units of service that are devoted to serving Queue $\mB_0$ rather than Queues $\mathcal B_1, ..., \mathcal B_J$.)
Since $Q^{\Sigma}_{\mB} (U) \leq \nu^2$,  (\ref{QBSum.upper.bound}) follows by applying  (\ref{boundonQsigma}) whenever $Q^{\Sigma}_{\mB}(u) \in (\nu (\nu - 1), \nu^2]$ occurs.
\end{proof}
\medskip

Lemma \ref{Lemma8} provides explicit upper and lower  bounds on the time $V$ and a lower bound on the total queue size at Component $\mA$.  The upper bound on $V$ implies that $V < T$.

\begin{lemma}\label{Lemma8}
Assume (\ref{superqueueload}) and \eqref{thm1condition}, and
assume \eqref{InitalCondition} with $M$ satisfying \eqref{lw.bound.M}.
On the event $G_M$,
 there exist constants $\kappa_3 >0$ and $\kappa_4 >0$ satisfying
\begin{align}
&
\left|
	V-\frac{JM}{J+\nu} \cdot \frac{1}{1 - a + {a\nu }/{J} }
\right|
\leq
\epsilon \kappa_3   M \, ,\label{eq:Lem8A}
\\[.5em]
&
Q^{\Sigma}_{\mA}(V)
\geq
 \frac{aM}{1 -a + {a\nu}/{  J}}
-
\epsilon  \kappa_4  M\,. \label{eq:Lem8B}
\end{align}
By (\ref{eq:Lem8A}), $V < T$ for sufficiently small $\epsilon >0$.  
\end{lemma}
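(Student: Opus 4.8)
The plan is to reduce the lemma to two essentially independent computations: the state of Component $\mA$ at the intermediate time $U$, and the nearly linear evolution of $Q_{\mA_0}$ and of $\nu\max_j Q_{\mA_j}$ on $[U,V]$, whose first crossing defines $V$. Throughout I write $O(\epsilon M)$ for any quantity bounded in absolute value by $\epsilon$ times a constant depending only on $a,J,\nu,\gamma$, and I set $c:=1-a+a\nu/J$, so that the asserted location of $V$ is $t^{\star}:=JM/((J+\nu)c)$. All estimates are taken on the event $G_M$.

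First I would pin down $A_{\mA_0}(U)$ and hence $Q_{\mA_0}(U)$. Since each $\mB_j$ ($j\ge 1$) receives only external arrivals and departs only into $\mA_0$, queue conservation gives $D_{\mB_j}(U)=Q_{\mB_j}(0)+A_{\mB_j}(U)-Q_{\mB_j}(U)$; summing,
\[
A_{\mA_0}(U)=\sum_{j=1}^{J}D_{\mB_j}(U)=\sum_{j=1}^{J}Q_{\mB_j}(0)+\sum_{j=1}^{J}A_{\mB_j}(U)-\sum_{j=1}^{J}Q_{\mB_j}(U).
\]
The initial balance \eqref{Gap} together with $Q^{\Sigma}_{\mB}(0)=M$ forces $\sum_j Q_{\mB_j}(0)=JM/(J+\nu)+O(\epsilon M)$; the event \eqref{Event} gives $\sum_j A_{\mB_j}(U)=aU+O(\epsilon M)$; and, since $U<T$ by Lemma \ref{LemmaEmpty}, the definition of $U$ yields $\sum_j Q_{\mB_j}(U)\le Q^{\Sigma}_{\mB}(U)\le\nu^2=O(\epsilon M)$. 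Hence $A_{\mA_0}(U)=JM/(J+\nu)+aU+O(\epsilon M)$. On the other hand, by Lemma \ref{LemmaStopService} Queue $\mA_0$ dominates and is served at full rate on $[\epsilon\kappa_2 M,U]$, staying positive there by the lower bound of Lemma \ref{A0 arrivals}, so $D_{\mA_0}(U)=U+O(\epsilon M)$. Subtracting yields the single quantity I will need,
\[
Q_{\mA_0}(U)+(1-a)U=A_{\mA_0}(U)-D_{\mA_0}(U)+(1-a)U=\frac{JM}{J+\nu}+O(\epsilon M).
\]

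Next I would analyse $[U,V]$. For $t\in[U,V]$, Lemma \ref{LemmaStopService} says only $\mA_0$ is served in $\mA$, so each $Q_{\mA_j}$ merely accrues its external arrivals; using \eqref{Event} and Lemma \ref{Lemon9} this gives $Q_{\mA_j}(t)=(a/J)t+O(\epsilon M)$ uniformly in $j$, whence $\nu\max_j Q_{\mA_j}(t)=(a\nu/J)t+O(\epsilon M)$ and $\sum_j Q_{\mA_j}(t)=at+O(\epsilon M)$. Meanwhile Lemma \ref{LemmaStillEmpty} keeps Component $\mB$ near-empty, so the jobs entering $\mA_0$ over $(U,t]$ match the external input to the $\mB_j$ up to $O(\epsilon M)$, i.e.\ arrivals to $\mA_0$ occur at rate $a$; as $\mA_0$ is served at rate $1$, we get $Q_{\mA_0}(t)=Q_{\mA_0}(U)-(1-a)(t-U)+O(\epsilon M)$. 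Writing $g(t):=Q_{\mA_0}(t)-\nu\max_j Q_{\mA_j}(t)$ and inserting the previous paragraph,
\[
g(t)=\bigl(Q_{\mA_0}(U)+(1-a)U\bigr)-ct+O(\epsilon M)=\frac{JM}{J+\nu}-ct+O(\epsilon M),\qquad t\in[U,V].
\]

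Finally I would read off $V$ and the two bounds. The function $g$ decreases at the definite rate $c>0$ and crosses zero at $t^{\star}+O(\epsilon M)$. These identities hold on all of $[U,V]$ by the stopping-time definition of $V$ in \eqref{eq:tildeT}; were $V=T$, they would hold up to $T$, and since the definition of $T$ in \eqref{defofT} together with \eqref{superqueueload} makes $t^{\star}<T$ by a fixed margin (indeed $t^{\star}<M/(1-a)<T$), they would force $g<0$ strictly before $T$, contradicting $g>0$ on $[U,V)$. Hence $V<T$, and $g(V)\le 0<g(t)$ for $t<V$ pins $|V-t^{\star}|\le\epsilon\kappa_3 M$, which is \eqref{eq:Lem8A}. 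At $t=V$ the definition of $V$ gives $Q_{\mA_0}(V)=\nu\max_j Q_{\mA_j}(V)+O(1)=(a\nu/J)V+O(\epsilon M)$, so
\[
Q^{\Sigma}_{\mA}(V)=Q_{\mA_0}(V)+\sum_{j=1}^{J}Q_{\mA_j}(V)=\frac{J+\nu}{J}\,aV+O(\epsilon M)=\frac{aM}{c}+O(\epsilon M),
\]
which gives \eqref{eq:Lem8B}. The main obstacle is bookkeeping: marshalling the several $O(\epsilon M)$ errors (from \eqref{Event}, the initial conditions, the short transient $[0,\epsilon\kappa_2 M]$, and the $O(\nu^3)$ residual mass in $\mB$) into the single constants $\kappa_3,\kappa_4$, and arranging the $V<T$ step so that it does not circularly invoke the very fluid identities used to locate $V$.
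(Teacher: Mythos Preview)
Your proposal is correct and follows essentially the same approach as the paper. The paper works with the single identity $Q_{\mA_0}(t)=Q_{\mA_0}(0)+\sum_j Q_{\mB_j}(0)-\sum_j Q_{\mB_j}(t)+\sum_j A_{\mB_j}(t)-D_{\mA_0}(t)$ on $[U,V]$ and bounds each term directly, whereas you split at $U$ and package the unknown $U$ into the neat combination $Q_{\mA_0}(U)+(1-a)U=JM/(J+\nu)+O(\epsilon M)$; the underlying estimates (initial balance, Lemma~\ref{LemmaStopService} for $D_{\mA_0}$, Lemma~\ref{LemmaStillEmpty} for $\sum_j Q_{\mB_j}$, and the arrival bounds from $G_M$) are identical, and your derivation of \eqref{eq:Lem8B} via $Q_{\mA_0}(V)+\sum_j Q_{\mA_j}(V)$ is equivalent to the paper's route via $(J+\nu)\min_j Q_{\mA_j}(V)$.
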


We remark that $V < T$, with Lemma \ref{LemmaStopService}, implies that $|Q_{\mA_0} (V) - \nu \max_{j=1,...,J} Q_{\mA_j}(V)| \le \nu$.

\smallskip

\begin{proof}[Proof of Lemma \ref{Lemma8}.]
We first demonstrate \eqref{eq:Lem8A}, which requires most of the work. Since all arrivals at Queue $\mathcal A_0$ are from Queues $\mathcal B_1,..., \mathcal B_J$,
$
Q_{\mathcal A_0}(t)
=
Q_{\mathcal A_0}(0)
+
\sum_{j=1}^J D_{\mB_j}(t)
-
D_{\mathcal A_0}(t)
$, and consequently,
\begin{equation}
\label{eq:QA5}
Q_{\mathcal A_0}(t)
=
Q_{\mathcal A_0}(0)
+
\sum_{j=1}^J Q_{\mB_j}(0) 
-
\sum_{j=1}^J 
Q_{\mathcal B_j}(t)
+
\sum_{j=1}^J
A_{\mB_j}(t)
-
D_{\mathcal A_0}(t) \ .
\end{equation}

We proceed to bound the five terms on the right-hand side of (\ref{eq:QA5}).  By \eqref{InitialA}, $Q_{\mA_0}(0) \leq \epsilon M /\nu$, and by 
\eqref{InitialB} and \eqref{Gap},
	\[
	\Big| \sum_{j=1}^J Q_{\mB_j}(0) - JM/ (J+\nu)\Big|
	 \leq
 \epsilon JM\, .
	\]
By Lemma \ref{LemmaStillEmpty} and \eqref{Event}, respectively, on $G_M$,
\[
\sum_{j=1}^J 
Q_{\mB_j} (t)
\leq 
\nu^3  + \epsilon b  JT\,, \quad \Big|\sum_{j=1}^J A_{\mB_j} (t) - at \Big|
	\leq
	 \epsilon b  J T \qquad  \text{ for } t\in [U,V] \,.
\]
Also, by Lemma \ref{LemmaStopService}, all service at Component $\mA$ over $(U \wedge \epsilon \kappa_2 M, V]$ occurs at Queue $\mA_0$, and so
\[
		t-  \epsilon \kappa_2   M \leq D_{\mA_0}(t) \leq t
\qquad  \text{ for } t\in [U,V]   \ .
	\]

Applying the above inequalities to \eqref{eq:QA5} implies the following bounds on $Q_{\mA_0}(t)$, for $ t\in [U,V]$:
\begin{equation}\label{eq:QABounds_a}
Q_{\mA_0}(t)
\leq
 \epsilon M/\nu  
+
\left[ JM/(J+\nu)  + \epsilon J M \right]
-
0
+
\left[ at + \epsilon b  {J} {T}\right]
-
\left[ t- \epsilon \kappa_2   M \right] \, ,
\end{equation}
and
\begin{equation}  \label{eq:QABounds_b}
Q_{\mA_0}(t)
\geq
 0  
+
\left[ JM/(J+\nu)  - \epsilon J M \right]
-
[
\nu^3   +  \epsilon b JT
]
+
\left[ at - \epsilon b  {J} {T}\right]
-
 t \ .
\end{equation}

On the other hand, by Lemma \ref{LemmaStopService}, the Queues $\mA_1,...,\mA_J$ will not be served over $(\epsilon \kappa_2   M,V]$, and so $D_{\mA_j}(t) \le \epsilon \nu \kappa_2 M$  for $t\in [U \wedge \epsilon \kappa_2   M,V]$. Together with  \eqref{InitalCondition} and \eqref{Event}, this implies that, on $G_M$,
\begin{equation} \label{changedfromalign}
\left[at/J - \epsilon b  T \right]  -\epsilon \nu \kappa_2   M 
\,\leq  \, Q_{\mA_j}(t) \, = \, Q_{\mA_j}(0) + A_{\mA_j}(t)  - D_{\mA_j}(t)
\, \leq \, \epsilon M /\nu + \left[at/J + \epsilon  b T \right]   \ .
\end{equation}
for  $t \in [U, V]$ and $j=1,...,J$.

Comparison of the upper and lower bounds in (\ref{eq:QABounds_a})--(\ref{eq:QABounds_b}) with those of (\ref{changedfromalign}) (after multiplying the latter by $\nu$) provides upper and lower bounds on $V$:   
For any $t > U$ at which the bound in \eqref{eq:QABounds_a} is at most the left-hand bound in (\ref{changedfromalign}), $V\le t$ must hold.
Similarly, at the last $t > U$ at which         the bound in \eqref{eq:QABounds_b}  is at least the right-hand side of (\ref{changedfromalign}), $V \geq t$.  Combining these upper and lower bounds for $V$, it is easy to check that
\begin{equation}\label{theinterval}  
V \in
\left[
	\frac{JM}{J+\nu} \cdot \frac{1}{1-a+ {a\nu }/{J}}
	- \epsilon \kappa_3 M
	, \,\,
			\frac{JM}{J+\nu} \cdot \frac{1}{1-a+ {a\nu }/{J}}
	+ \epsilon \kappa_3 M 
	\right]
\end{equation}
for large $M$, where $\kappa_3> 0$ does not depend on  
$\epsilon$ or $M$.  This implies \eqref{eq:Lem8A}.  Since $T > M/(1-a)$, $V < T$ follows immediately from the upper bound in (\ref{theinterval}), for sufficiently small $\epsilon >0$.

The inequality \eqref{eq:Lem8B} will now follow quickly.
By Lemma \ref{LemmaStopService}, all service  at any
Queue $\mA_j$, $j=1,\ldots,J$, by time $V$ must occur by time  $\epsilon \kappa_2 M$, which implies at most
$\epsilon \nu \kappa_2 M$ jobs can be served at any such queue by time $V$.   On the other hand, denoting by $t_*$ the lower bound in (\ref{theinterval}) and applying \eqref{Event}, at least  $at_*/J - \epsilon b T$ jobs arrive at each such queue by time $V$.  It follows from this and the definition of $V$ that
\begin{align*}
Q^{\Sigma}_{\mA}(V)
\geq
\big(J+\nu\big)
\min_{j=1,...,J} Q_{\mA_j}(V)
&
\geq
\big(J+\nu\big)
\Big[
at_*/J - \epsilon b T  -\epsilon \nu \kappa_2 M
\Big]
\geq
\frac{aM}{1 -a + a\nu /J}
-
\epsilon \kappa_4   M \,,
\end{align*}
where $\kappa_4 > 0$ is defined in terms of $\kappa_2$ and $\kappa_3$. 
%
\end{proof}

\smallskip
\smallskip

Employing Lemmas \ref{LemmaStopService}--\ref{Lemma8}, we now demonstrate Proposition \ref{MainProp}.

\smallskip

\begin{proof}[{Proof of Proposition \ref{MainProp}.}]

We need to show that, at time $V$, the conditions (\ref{Init1})-(\ref{Init3}) and (\ref{Init4}) hold.

Since $M':=\gamma \, M$, with 
$1 < \gamma < a/(1-a+{a \nu}/{J})$,  \eqref{Init1} follows immediately from (\ref{eq:Lem8B}) of
Lemma \ref{Lemma8},  
for $\epsilon > 0$ chosen sufficiently small.

To show \eqref{Init2} and (\ref{Init3}), we recall that
$ b = (\gamma - 1)(1-a)(1-r_{\vecrho})/6 \nu  J$.
Hence, by Lemma \ref{LemmaStillEmpty}, 
\begin{equation}\label{1st.ineq.epsilon.espisol_prime}
	Q_{\mB}^{\Sigma}(V) \le  \nu^3  +  \epsilon bJ T  \leq \epsilon \gamma M
\end{equation}
for large $M$,
and so \eqref{Init2} holds.
By the remark below Lemma \ref{Lemma8},  $|Q_{\mA_0} (V) - \nu \max_{j=1,...,J} Q_{\mA_j}(V)|\le \nu$.  It follows from this, Lemma \ref{Lemon9}, and the definition of $b$ that 
$$ \max_{j=1,...,J} \left| Q_{\mA_0}(V) - \nu Q_{\mA_j} (V) \right| \le
\epsilon M +  3 \epsilon b \nu  T + \nu
\leq \epsilon \gamma M$$
for large $M$, and so (\ref{Init3}) holds.

We still need to show \eqref{Init4}.
By Lemma  \ref{LemmaStopService}, Queues $\mA_j$, $j=1,\ldots, J$, are not served over times $t\in (\epsilon\kappa_2  M, V]$ and so, on $G_M$,
\begin{equation*}
	Q^{\Sigma}_\mA (t)
	\geq
	\sum_{j=1}^J Q_{\mA_j}(t)
	\geq
	\sum_{j=1}^J
		\left(
		A_{\mA_j}(t)
		-
		A_{\mA_j}(\epsilon \kappa_2   M) \right) 	
		\geq
	 a (t-\epsilon \kappa_2   M) - \epsilon b  J T
\quad \text{ for } t\le V \, .
	\end{equation*}
Since Component $\mB $, serves at most $\nu$ jobs per unit of time, $Q_{\mB }^{\Sigma} (t) \geq M - t   \nu$ for all $t$.
	
These two inequalities imply that, for $t \le V$,
\begin{align}\label{eq:Qsums}
	Q^\Sigma_\mA (t) + Q^\Sigma_\mB (t)
	&\geq
	[ a (t-\epsilon \kappa_2   M) -  \epsilon b J  T]
	 \vee
	[M - t   \nu ]
	\\[.2em]
	&\geq
	\frac{aM}{a +\nu}
	- \frac{\epsilon \nu ( \kappa_2 aM + b J T)}{a+ \nu}\, ,\notag
\end{align}
where the right-hand side of \eqref{eq:Qsums} follows
by weighting the terms on either side of $\vee$ by 
$\nu / (a+\nu)$ and $a / (a +\nu)$, respectively. 
Condition \eqref{Init4} follows by choosing $\epsilon >0$ sufficently small.
\end{proof}


\section{Stability Results.}\label{sec:stable}
Our main result in this section is Theorem \ref{thrm:p-MW}, which states that the queueing network process associated with the 
weighted MaxWeight optimization
(\ref{eq1wtmaxwt}) is positive recurrent under $\vecrho \in \mC$.

The proof of Theorem \ref{thrm:p-MW} employs the standard tool of fluid models. In Subsection \ref{sec:Fluid_Model}, we present a set of fluid model equations that are deterministic analogs of the queueing network equations corresponding to the switched network. 
Using the Lyapunov function
\begin{equation} \label{defofh}
h(t): 
=
\max_{{\vecsigma} \in {\mS}}\; \sum_{j=1}^J Q_{j}(t) 
\left( 
\frac{\sigma_{j}}{\rho_{j}}
- 1
\right)
=
\sum_{j\in \mathcal J} 
	Q_j (t)
		\left(\frac{D'_j(t)}{\lambda_j} -1 \right),
\end{equation}
 we show that, for $\bm {\rho} \in \mC$, the fluid model is stable.  We then follow standard reasoning to show that the positive recurrence of the queueing network process follows from the stability of the fluid model.  Details of the reasoning are given in the Appendix.

Since the proof of maximum stability is easier for the queueing network process associated with the  MaxWeight optimization
in the setting of tandem networks with constant mean work rates, we first prove the corresponding Theorem \ref{tandemtheorem} in Subsection \ref{sec.re-entrant.lines}.  We also state the analogous result, Theorem \ref{purebranching}, for networks with only branching, which is proved in the Appendix. In Subsection \ref{subsectionweightmaxweight}, we prove Theorem  \ref{thrm:p-MW}.

\subsection{Fluid Model.}\label{sec:Fluid_Model}


We employ here the \emph{fluid model equations}
\begin{subequations}\label{eq:FN}
\begin{align}
	\bar{Q}_j(t) 
&= 
\bar{Q}_j(0) 
+ \bar{A}_j(t) 
-\bar{D}_j(t)\, , \label{eq:FN1}
\\[.1em]
\bar{A}_j(t) 
&=
a_j t
+
\sum_{j'\in \mathcal J}
	\bar{D}_{j'}(t)P_{j'j} , \label{eq:FN2}
\\
{\bar{D}_j(t) } 
&= p_j{ \bar{\Pi}_j(t)}   \, \label{eq:FN3}
\end{align}
\end{subequations}
for $t\in \mathbb R_+$, $j,j' \in \mathcal J$, where $a_j$ and $p_j$ are as in Section \ref{sec:model} .  We denote the corresponding \emph{fluid model solutions} by
$({\bf \bar{Q}}(t),{\bf \bar{A}}(t),{\bf \bar{D}}(t),{\bf \bar{\Pi}}(t) : t\geq 0)$, where the components are the vectors corresponding to the terms in (\ref{eq:FN}).

As in Section  \ref{sec:model}, one can interpret $\bar{Q}_j(t)$ as the queue size of Queue $j$, $\bar{A}_j(t)$ as the cumulative number of arrivals, $\bar{D}_j(t)$ as the cumulative number of departures, and $\bar{\Pi}_j(t)$ as the cumulative amount of service scheduled at Queue $j$ by time $t$. One requires that $\bar{Q}_j(t)$ be non-negative, that $\bar{A}_j(t)$, $\bar{D}_j(t)$ and $\bar{\Pi}_j(t)$ be non-decreasing and non-negative, and
that $\bar{\Pi}_j(t)$ be Lipschitz continuous; the last implies that $\bar{D}_j(t)$, $\bar{A}_j(t)$ and $\bar{Q}_j(t)$ are also Lipschitz continuous. Lipschitz continuity implies these functions are almost everywhere differentiable.

We further assume ${\bf \bar{\Pi}}(t)$ satisfies the weighted MaxWeight optimization

\begin{equation}\label{argmax}
\bar{\bm{\Pi}}'(t) \in  \argmax_{{\vecsigma} \in {\mS}}\; \sum_{j\in\mJ} \bar{Q}_j(t)\frac{\sigma_j}{\rho_j}   
\end{equation}
corresponding to (\ref{eq1wtmaxwt}).

%

We say that a set of fluid model equations is \emph{stable} if there exists a time $t_0 >0$ such that, for all initial states $\bar{\bm Q}(0)$ with $| \bar{\bm Q}(0) | \leq 1$, all solutions of the fluid model equations satisfy
\[
\bar{\bm Q} (t) =0,\qquad \forall \, t \geq t_0 \,.
\]
It has been well-known since Dai \cite{Da95} (see also \cite{Br08}) that, under appropriate conditions, the stability of a fluid model will imply positive recurrence of the associated queueing network. In our case, we employ the following proposition, whose proof is given in Section \ref{Sec:PosRec} of the Appendix. 

\begin{proposition}
\label{propFMQN}
Suppose that the weighted MaxWeight fluid model (\ref{eq:FN})--(\ref{argmax}) is stable.  Then the corresponding weighted MaxWeight queueing network given in (\ref{eq1wtmaxwt})--(\ref{eq:PI}) is positive recurrent.
\end{proposition}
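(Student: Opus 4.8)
The plan is to prove Proposition \ref{propFMQN} by the standard fluid-limit method of Dai \cite{Da95} and Bramson \cite{Br08}: I construct fluid limits of the rescaled weighted MaxWeight queueing network, verify that every such limit solves the fluid model equations (\ref{eq:FN})--(\ref{argmax}), and then invoke the general principle that stability of the fluid model forces a contractive drift on the Markov chain, yielding positive recurrence. The only genuinely policy-specific content is showing that the limit inherits the weighted MaxWeight optimization (\ref{argmax}); the rest is routine once the requisite moment bounds are available.

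\emph{Fluid-limit construction.} Fix a sequence of initial states with $|\vecQ(0)| = x_n \to \infty$ and set $\bar{\vecQ}^{(n)}(t) = \vecQ(\lfloor x_n t\rfloor)/x_n$, with $\bar A^{(n)}, \bar D^{(n)}, \bar\Pi^{(n)}$ defined analogously. Because $\mS \subset \bZ_+^{|\mJ|}$ is finite and downward closed, the cumulative scheduled service increases by at most $\sigmamax$ per time step, so $\bar\Pi^{(n)}$ is uniformly Lipschitz; this transfers to $\bar A^{(n)}, \bar D^{(n)}, \bar{\vecQ}^{(n)}$ through the queueing equations (\ref{eq:QN}). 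The functional strong law of large numbers for the primitive data---the external arrival processes $E_j$, the routing sequences $\Phi_{jj'}$, and the geometric service renewal processes $S_j$ (IID with finite mean, and $S_j$ possessing all moments)---together with equi-Lipschitz continuity and the Arzel\`a--Ascoli theorem gives, almost surely along a subsequence, uniform-on-compacts convergence to Lipschitz limits $(\bar{\vecQ}, \bar A, \bar D, \bar\Pi)$. Passing to the limit in (\ref{eq:QN1})--(\ref{eq:QN3}) and (\ref{eq:PI}) yields (\ref{eq:FN1})--(\ref{eq:FN3}); the truncation $\pi_j \wedge Q_j$ is inactive in the limit wherever $\bar{Q}_j > 0$ and merely enforces $\bar{Q}_j \ge 0$ elsewhere, so it causes no difficulty.

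\emph{The policy constraint.} The crux is (\ref{argmax}). At each integer time $\tau$ the schedule $\vecpi(\vecQ(\tau))$ maximizes $\sum_j Q_j(\tau)\sigma_j/\rho_j$ over $\vecsigma\in\mS$. Fix a differentiability point $t$ of $\bar\Pi$. Over a short window $[t, t+\delta]$, continuity of $\bar{\vecQ}$ and uniform convergence force the pre-limit weights $Q_j(\tau)/\rho_j$ to lie within $o(1)$ of $\bar{Q}_j(t)/\rho_j$ for all $\tau$ in the corresponding block of times; hence each selected schedule is an approximate maximizer for the limiting weight vector. Since $\mS$ is finite and the objective is linear in $\vecsigma$, the argmax correspondence is upper hemicontinuous, so averaging the selected schedules over $[t,t+\delta]$ and letting $\delta\downarrow 0$ shows $\bar{\bm\Pi}'(t)$ lies in the convex hull of $\argmax_{\vecsigma\in\mS}\sum_j \bar{Q}_j(t)\sigma_j/\rho_j$, which is precisely (\ref{argmax}). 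Thus every fluid limit is a fluid model solution.

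\emph{From fluid stability to positive recurrence.} By hypothesis the fluid model is stable, so there is an emptying time $t_0$ with $\bar{\vecQ}(t)=0$ for $t\ge t_0$ whenever $|\bar{\vecQ}(0)|\le 1$. Fix $T>t_0$. Upgrading the fluid-limit convergence to an $L^1$ statement via uniform integrability of $|\vecQ(\lfloor |x| T\rfloor)|/|x|$ (which holds under the standing moment assumptions on the external arrivals together with the fact that geometric service has all moments) gives $\lim_{|x|\to\infty} \bE_x\big[\,|\vecQ(\lfloor |x| T\rfloor)|\,\big]/|x| = 0$. In particular $\bE_x[|\vecQ(\lfloor |x|T\rfloor)|] \le \tfrac12 |x|$ for all $|x|\ge x_0$, a state-dependent multiplicative drift condition; since the chain is irreducible on a countable state space with bounded one-step increments, the criterion of Dai \cite{Da95} (see also \cite{Br08}) then yields positive recurrence. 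I expect the main obstacle to be the policy-constraint verification of the previous paragraph: one must pass the discrete $\argmax$ property to the continuous limit while simultaneously controlling ties among near-optimal schedules and the near-empty-queue truncation, whereas the uniform-integrability upgrade and the final drift argument are standard.
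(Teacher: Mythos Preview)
Your proposal is correct and follows essentially the same approach as the paper: establish fluid limits via Arzel\`a--Ascoli and the functional SLLN, verify that every limit satisfies (\ref{eq:FN})--(\ref{argmax}), upgrade almost-sure convergence to $L^1$ via uniform integrability of $|\bar{\vecQ}^{(n)}(T)|$ (which the paper obtains from the bound $|\bar{\vecQ}^{c}(t)| \le |\bar{\vecQ}^{c}(0)| + \sum_j \bar E_j^c(t)$), and conclude positive recurrence from the resulting multiplicative drift via the Dai/Bramson criterion. The only notable technical difference is in the verification of (\ref{argmax}): you argue via upper hemicontinuity of the $\argmax$ correspondence over the finite set $\mS$, whereas the paper proceeds by an integral comparison, introducing an auxiliary untruncated cumulative schedule $\hat{\bm\Pi}^c$, showing $\int_s^t (\bar{\vecQ}^c/\vecrho)\cdot d\hat{\bm\Pi}^c - \int_s^t (\bar{\vecQ}^c/\vecrho)\cdot d\bar{\bm\Pi}^c \to 0$, and then passing the pre-limit optimality inequality $\int_s^t (\bar{\vecQ}^c/\vecrho)\cdot d\hat{\bm\Pi}^c \ge \int_s^t (\bar{\vecQ}^c/\vecrho)\cdot \bar{\vecsigma}\,du$ to the limit before differentiating; both arguments are valid and yield the same conclusion.
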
 
 
In this subsection, we have used the bar superscript to distinguish the fluid model from its associated queueing model; in the remainder of Section \ref{sec:stable}, we will remove this superscript from the notation and work exclusively with the fluid model.

\subsection{Stability in Tandem Switched Networks.}\label{sec.re-entrant.lines}
A \emph{tandem network} is a queueing network with a single route, with each queue having a single class (see Figure \ref{Fig4}).
Tandem networks  are often used as models for  manufacturing systems.
Theorem \ref{tandemtheorem} shows that, when the mean work per job required at each queue (i.e., mean job size) is the same at all queues, MaxWeight is maximally stable.

\begin{figure}[ht!]
		\centering

\begin{tikzpicture}

	\tikzset{
		pic shift/.store in=\shiftcoord,
		pic shift={(0,0)},
		queue/.pic = {
			\begin{scope}[shift={\shiftcoord}]
				\coordinate (_arr) at (-0.25,0);
				\coordinate (_in) at (0.5,0);
				\coordinate (_dep) at (1,0);
				\draw (0,0.25) -- (1,0.25) -- (1, -0.25) -- (0,-0.25);
			 \end{scope}
		}
	}
	
		\path pic (A1) at (0.25,0) {queue} (A1_in) node {};
		\path pic (A2) at (2.5,0) {queue} (A2_in) node {};
		\path pic (AbJ) at (5.75,0) {queue} (AbJ_in) node {};
		\path pic (AJ) at (8,0) {queue} (AJ_in) node {};
		\draw [->] (A1_dep.east) -- (A2_arr.west) node [above, midway]{$\phantom{\nu}$};
		\draw [dotted, ->] (A2_dep.east)  -- ++(0.75,0) node [right] {$\cdots$} ++(0.75,0) -- (AbJ_arr.west);
		\draw [->] (AbJ_dep.east) -- (AJ_arr.west) node [above, midway]{$\phantom{\nu}$};
		\draw [->] (AJ_dep.east)  -- ++(1,0) node [above, midway]{$\phantom{\nu}$} -- ++(0.5,0) ;
		\draw [<-] (A1_arr.west) -- ++(-1,0) node [left]{$a$};
		
		\end{tikzpicture}

		\caption{A typical tandem network with external arrival rate $a$. 	\label{Fig4}}
\end{figure}
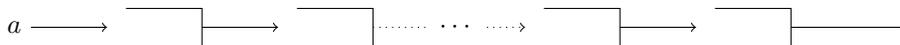

\smallskip

\begin{proof}[{Proof of Theorem \ref{tandemtheorem}.}]
The topology of the network and the unit size of the jobs imply, by \eqref{rel.rho.lambda.p}, that $\rho_{j}=a$ for all $1 \leq j \leq J$;
the condition $a < \nu$ is therefore equivalent to assuming $\bm {\rho} \in \mC$.
To analyze the fluid model defined in (\ref{eq:FN})--(\ref{argmax}), 
we employ the Lyapunov function
\begin{equation} \label{defofhfortandem}
h(t)
=
\sum_{j=1}^J 
Q_{ j}(t) 
\bigg(\frac{D'_{ j}(t)}{a} - 1 \bigg) \ ,
\end{equation}
which is the restriction of $h(t)$ in (\ref{defofh}) under
$\rho_{ j} = a$, for all $1 \leq j \leq J$.

Since $\rho \in \mathcal C$,  $h(t)$ is non-negative and is zero only when $\bm Q(t)=0$.  It follows from the definition of $h(t)$ in (\ref{defofh}) and the Lipschitz continuity of $\bm Q(t)$ that
 $h(t)$ is also Lipschitz continuous, and is thus almost everywhere differentiable.  

Most of the proof of the theorem consists of obtaining the upper bound (\ref{h'bound}) on the left derivative of $h'(t)$ (which equals the derivative almost everywhere).
It follows from (\ref{h'bound}) that $h(t) = 0$ for $t\ge Ch(0)$ for some constant $C$.  Since $h(0) \leq C' \sum_j Q_{ j}(0)$ for appropriate $C'$, this implies that the fluid model is stable. 
It then follows from Proposition \ref{propFMQN} that the queueing network is positive recurrent.

To bound the left derivative of $h(t)$, we first observe that, for $\delta < 0$, 
\begin{align*} 
\frac{h(t+\delta) - h(t)}{\delta}  
&
\leq 
\frac{1}{\delta}
\left[ 
	\sum_{j=1}^J 
		Q_{ j}(t+\delta) 
		\bigg(\frac{D'_{ j}(t)}{a} - 1 \bigg) 
 - 
 \sum_{j=1}^J  
 		Q_{ j}(t) 
 		\bigg(\frac{D'_{ j}(t)}{a} - 1 \bigg) 
\right]
\\
&
=
\sum_{j=1}^J 
	\frac{ Q_{ j}(t+\delta)  - Q_{ j}(t) }{\delta}
\bigg(\frac{D'_{ j}(t)}{a} - 1 \bigg) .
\end{align*}
The above inequality follows from the optimality for MaxWeight of 
$({D}'_{ j}(t+\delta): 1 \leq j \leq J)$ for queue sizes  
$(Q_{ j}(t+\delta) : 1 \leq j \leq J)$, and the suboptimality of $({D}'_{ j}(t): 1 \leq j \leq J)$.
Taking limits implies that
\begin{align*}
h'(t) 
= 
\lim_{\delta \nearrow 0} 
	 \frac{h(t+\delta) - h(t)}{\delta}  
&
\leq 
\lim_{\delta  \nearrow 0} 
\sum_{j=1}^J
	\frac{ Q_{ j}(t+\delta)  - Q_{ j}(t) }{\delta}
\bigg(\frac{D'_{ j}(t)}{a} - 1 \bigg) 
\\
&
=
\sum_{j=1}^J Q_{ j}'(t) \bigg(\frac{D'_{ j}(t)}{a} - 1 \bigg)  .
\end{align*}

 Expanding the last term in the above display implies
\begin{align}
{h}' (t)
&
\leq 
\sum_{j=1}^J Q_{ j}'(t) \bigg(\frac{D'_{ j}(t)}{a} - 1 \bigg) 
\notag 
 = 
 \sum_{j=1}^J Q_{ j}'(t) \frac{D_{ j}'(t)}{a}  
- (a - D'_{ J}(t))
\notag 
\\[.15em]
&
=  
\sum_{j=1}^{J+1} \Big( D'_{ {j-1}}(t)-D'_{ j}(t)\Big)
\frac{
 D_{ j}'(t)
}{a}
\notag 
= 
- \frac{1}{a} \sum_{j=1}^{J+1}
\left[
	\frac{1}{2} D'_{ j}(t)^2
	-
	D'_{ j}(t) D'_{ {j-1}}(t)
	+
	\frac{1}{2} D'_{ {j-1}}(t)^2
\right]	
\notag 
\\
&=
-\frac{1}{2 a} \sum_{j=1}^{J+1} \Big( D'_{ {j-1}}(t)-D'_{ j}(t)\Big)^2 \, ,
\label{eq:h_bound}
\end{align}
where we are setting $D'_{ 0}(t)=a$ and $D'_{ {J+1}}(t)=a$. 
The first equality in (\ref{eq:h_bound}) holds since $\sum_j Q'_{ j}(t)$ telescopes, and the third equality employs the identity
\[
\sum_{j=1}^{J+1} D'_{ j}(t)^2 
=
\sum_{j=1}^{J+1}
\left[
	\frac{1}{2}D'_{ j}(t)^2
	+
	\frac{1}{2}D'_{ {j-1}}(t)^2
\right]\ .
\]

To bound the last term in (\ref{eq:h_bound}), we employ $\bm \rho = (a,...,a) \in \mC$ and $\bm D'(t)\in\partial\mC$, 
which state that the traffic intensity lies in the interior of the stability region,
and that the departure rate vector lies on its boundary whenever queue sizes are non-zero.
In particular, there exists a constant $\epsilon>0$, not depending on time, such that $|a - D'_{ {j_0}}(t) | \geq \epsilon $ 
for some $j_0 \in \{1, \ldots,J\}$, whenever $\bm Q(t) \neq 0$.
It follows that
\begin{equation}\label{eq:epsi_bound}
\sum_{j=1}^{J+1}
\Big( D'_{ {j-1}}(t)-D'_{ j}(t) \Big)^2
\geq 
\sum_{j=1}^{j_0}
\Big( D'_{ {j-1}}(t)-D'_{ j}(t) \Big)^2
\geq 
\frac{
\epsilon^2
}{
j_0^2
}
\geq 
\frac{\epsilon^2 }{J^2}\,,
\end{equation}
since at least one of the differences must be at least $\epsilon / j_0$.

When $\bm Q(t) \neq 0$, \eqref{eq:h_bound} and \eqref{eq:epsi_bound} imply that 
\begin{equation} \label{h'bound}
h'(t) \leq - \frac{\epsilon^2}{2a J^2}\,.
\end{equation}
It follows that $h(t)=0$  for all $t>t_0:=2a J^2 h(0)/\epsilon^2$, which implies that $\bm Q(t)=0$ for all $t\ge t_0$, as desired. 
\end{proof}

\medskip

Generalizations of Theorem \ref{tandemtheorem} hold for certain other networks, such as switched networks consisting of parallel sequences of queues in tandem that do not interact, if one assumes that the mean required work is constant over queues along individual sequences.  Maximum stability of the MaxWeight policy for such switched networks is a simple application of Theorem \ref{thrm:p-MW}, since scaling the required work along different sequences does not affect the associated queueing network.

More significant examples are  \emph{pure branching switched networks}, i.e., open switched networks with a single arrival stream at each queue.  An example is given in Figure \ref{Fig5}. 
As before, the mean job sizes at different queues are assumed to be equal.  The stability result for pure branching networks is stated in Theorem \ref{purebranching} and its proof is given in the Appendix.  Interestingly, the result cannot be derived from Theorem \ref{thrm:p-MW}.

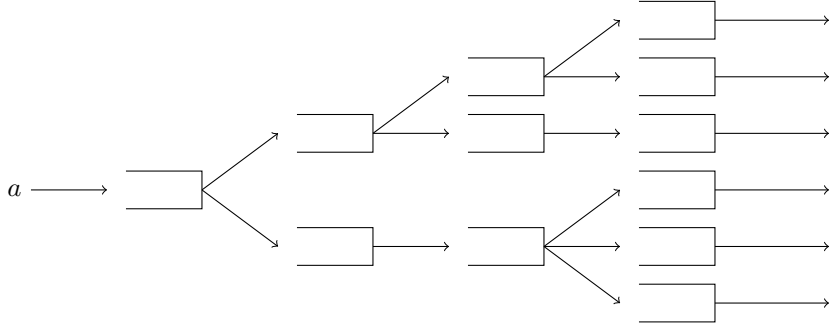
\begin{figure}[h]
	\centering

\begin{tikzpicture}

\tikzset{
    pic shift/.store in=\shiftcoord,
    pic shift={(0,0)},
    queue/.pic = {
        \begin{scope}[shift={\shiftcoord}]
            \coordinate (_arr) at (-0.25,0);
            \coordinate (_in) at (0.5,0);
            \coordinate (_dep) at (1,0);
            \draw (0,0.25) -- (1,0.25) -- (1, -0.25) -- (0,-0.25);
         \end{scope}
    }
}

    at (-0.5,1.15) {\raisebox{10em}{}};
\path pic (A1) at (0.25,0.75) {queue} (A1_in) node {};
\path pic (A11) at (2.5,0) {queue} (A11_in) node {};
\path pic (A12) at (2.5,1.5) {queue} (A12_in) node {};
\path pic (A111) at (4.75,0) {queue} (A111_in) node {};
\path pic (A121) at (4.75,1.5) {queue} (A121_in) node {};
\path pic (A122) at (4.75,2.25) {queue} (A122_in) node {};
\path pic (A1111) at (7,-0.75) {queue} (A1111_in) node {};
\path pic (A1112) at (7,0) {queue} (A1112_in) node {};
\path pic (A1113) at (7,0.75) {queue} (A1113_in) node {};
\path pic (A1211) at (7,1.5) {queue} (A1211_in) node {};
\path pic (A1221) at (7,2.25) {queue} (A1221_in) node {};
\path pic (A1222) at (7,3) {queue} (A1222_in) node {};
\draw [->] (A1_dep.east) -- (A11_arr.west);
\draw [->] (A1_dep.east) -- (A12_arr.west);
\draw [->] (A11_dep.east) -- (A111_arr.west);
\draw [->] (A12_dep.east) -- (A121_arr.west);
\draw [->] (A12_dep.east) -- (A122_arr.west);
\draw [->] (A111_dep.east) -- (A1111_arr.west);
\draw [->] (A111_dep.east) -- (A1112_arr.west);
\draw [->] (A111_dep.east) -- (A1113_arr.west);
\draw [->] (A121_dep.east) -- (A1211_arr.west);
\draw [->] (A122_dep.east) -- (A1221_arr.west);
\draw [->] (A122_dep.east) -- (A1222_arr.west);
\draw [->] (A1111_dep.east)  -- ++(1.5,0);
\draw [->] (A1112_dep.east)  -- ++(1.5,0);
\draw [->] (A1113_dep.east)  -- ++(1.5,0);
\draw [->] (A1211_dep.east)  -- ++(1.5,0);
\draw [->] (A1221_dep.east)  -- ++(1.5,0);
\draw [->] (A1222_dep.east)  -- ++(1.5,0);

\draw [<-] (A1_arr.west) -- ++(-1,0) node [left]{$a$};

\end{tikzpicture}

	\caption{A pure branching network with external arrival rate $a$.\label{Fig5}}
\end{figure}

\begin{theorem}\label{purebranching}
For a pure branching switched network with
$
\bm \rho  \in {\mathcal C}$ and 
the same mean work per job required at each queue,
 the associated queueing network process $\mQ$ is positive recurrent under the MaxWeight policy.
\end{theorem}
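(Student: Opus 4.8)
The plan is to mirror the proof of Theorem \ref{tandemtheorem}: pass to the fluid model for the \emph{unweighted} MaxWeight policy, produce a Lyapunov function whose derivative is strictly negative while the fluid queue is nonzero, and then invoke the fluid-to-positive-recurrence reduction. Concretely, I would work with the fluid equations \eqref{eq:FN1}--\eqref{eq:FN3} but with \eqref{argmax} replaced by the unweighted optimization $\bar{\bm\Pi}'(t)\in\argmax_{\vecsigma\in\mS}\sum_{j\in\mJ}\bar Q_j(t)\sigma_j$, and note that the reduction underlying Proposition \ref{propFMQN} (Dai \cite{Da95}) applies equally to this policy, so it suffices to prove fluid stability. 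Because the mean work per job is constant, $p_j\equiv p$, and after rescaling time I may take $p=1$, so that $\bar D_j'(t)=\sigma^*_j(t)$ and $\rho_j=\lambda_j$, where $\sigma^*(t):=\bar{\bm\Pi}'(t)$. I propose the unweighted analog of the Lyapunov function in \eqref{defofh}, namely
\[
h(t)=\max_{\vecsigma\in\mS}\sum_{j\in\mJ}\bar Q_j(t)(\sigma_j-\rho_j)=\sum_{j\in\mJ}\bar Q_j(t)(\sigma^*_j(t)-\rho_j).
\]
Since $\vecrho\in\mC\subset\regS$ one has $\sum_j \bar Q_j\rho_j\le\max_{\vecsigma}\sum_j \bar Q_j\sigma_j$, so $h\ge 0$, with $h(t)=0$ only when $\bar{\bm Q}(t)=0$; moreover $h$ is Lipschitz.

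Exactly as in the proof of Theorem \ref{tandemtheorem}, the optimality of $\sigma^*(t)$ for $\bar{\bm Q}(t)$ and its feasibility (hence suboptimality) for $\bar{\bm Q}(t+\delta)$ gives, for $\delta<0$, the left-derivative bound $h'(t)\le\sum_{j\in\mJ}\bar Q_j'(t)(\sigma^*_j(t)-\rho_j)$. Writing $e_j:=\sigma^*_j-\rho_j$ and substituting the flow balance $\bar Q_j'=a_j+\sum_{j'}\sigma^*_{j'}P_{j'j}-\sigma^*_j$ together with the traffic identity $a_j=\rho_j-\sum_{j'}\rho_{j'}P_{j'j}$ yields $\bar Q_j'(t)=\sum_{j'\in\mJ}e_{j'}P_{j'j}-e_j$, and therefore
\[
h'(t)\le\sum_{j\in\mJ}e_j\Big(\sum_{j'\in\mJ}e_{j'}P_{j'j}-e_j\Big)=\mathbf e^{\T}\Big(\tfrac12(P+P^{\T})-I\Big)\mathbf e=:\mathbf e^{\T}M\mathbf e,
\]
where $P$ is the internal routing matrix. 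The crucial point is that the pure-branching (no-merging) hypothesis says precisely that each column of $P$ has at most one nonzero entry, i.e., the internal routing graph is a forest. I would emphasize here that the constant-work assumption is genuinely used: with nonconstant $p_j$ the substitution produces a reweighted matrix that need not be substochastic, and the argument below breaks down — this is why the result does not follow from Theorem \ref{thrm:p-MW} by reweighting.

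The heart of the proof is to show $M\prec 0$, equivalently $\lambda_{\max}(\tfrac12(P+P^{\T}))<1$. Since $P$ is substochastic, $\|P\|_\infty\le 1$ (row sums are routing probabilities) and, by the no-merging property, each column has a single entry of size at most one, so $\|P\|_1\le 1$; hence $\lambda_{\max}(\tfrac12(P+P^{\T}))\le\tfrac12(\|P\|_\infty+\|P\|_1)\le 1$. Strictness is exactly where acyclicity enters: by the arithmetic--geometric mean inequality, $\mathbf e^{\T}\tfrac12(P+P^{\T})\mathbf e\le\|\mathbf e\|^2$ with equality only if $\mathbf e$ is constant on each connected component, and then the value equals (number of edges)$/$(number of nodes), which is $<1$ on each tree component since a tree on $m$ nodes has $m-1$ edges. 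Thus $\lambda_{\max}(\tfrac12(P+P^{\T}))<1$ and $h'(t)\le-c\|\mathbf e\|^2$ for a fixed $c>0$. Finally, whenever $\bar{\bm Q}(t)\ne 0$ the maximizer $\sigma^*(t)$ maximizes a nonzero linear functional over $\regS$ and so lies on $\partial\regS$, while $\vecrho\in\mC$ is interior; hence $\|\mathbf e\|=\|\sigma^*-\vecrho\|\ge\epsilon$ for a fixed $\epsilon>0$ equal to the distance from $\vecrho$ to $\partial\regS$. This gives $h'(t)\le-c\epsilon^2<0$, so $h$ (and hence $\bar{\bm Q}$) reaches $0$ in finite time, and Proposition \ref{propFMQN} yields positive recurrence.

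I expect the main obstacle to be the strict spectral bound $\lambda_{\max}(\tfrac12(P+P^{\T}))<1$ — that is, upgrading negative \emph{semi}definiteness of $M$ to negative definiteness, and recognizing that the forest structure forced by the absence of merging is exactly what supplies strictness. This is the structural feature that distinguishes branching networks from general ones: for a network with cycles the symmetrized routing matrix can have spectral radius $\ge 1$, so $M$ fails to be negative definite, which is consistent with the instability established in Theorem \ref{thrm:MW}. A secondary point worth spelling out carefully is the boundary gap $\|\mathbf e\|\ge\epsilon$, where I would argue that the MaxWeight maximizer sits on $\partial\regS$ whenever $\bar{\bm Q}\ne 0$ while $\vecrho$ stays a fixed interior point of $\regS$.
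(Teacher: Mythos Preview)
Your proposal is correct and uses the same Lyapunov function as the paper, $g(t)=\max_{\vecsigma\in\mS}\sum_j Q_j(t)(\sigma_j-\rho_j)$, together with the same optimality-based bound $g'(t)\le \mathbf e^{\T}(P^{\T}-I)\mathbf e$ with $e_j=D_j'(t)-\lambda_j$. The only genuine difference is how you extract a uniform strictly negative bound from this quadratic form. The paper completes the square via the polarization identity to get
\[
\mathbf e^{\T}(P^{\T}-I)\mathbf e=-\tfrac12\|(P^{\T}-I)\mathbf e\|^2-\tfrac12\|\mathbf e\|^2+\tfrac12\|P^{\T}\mathbf e\|^2,
\]
uses pure branching to show $\|P^{\T}\mathbf e\|^2\le\|\mathbf e\|^2$ (each column of $P$ has a single entry, so $\|P^{\T}\mathbf e\|^2=\sum_j\big(\sum_{j':b(j')=j}P_{jj'}^2\big)e_j^2$), and then argues by tracing back along the unique path to an external-arrival root that $\|(P^{\T}-I)\mathbf e\|$ cannot be small while $\mathbf D'(t)\in\partial\!<\!\mS\!>$ and $\vecrho\in\mC$. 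You instead prove the spectral statement $\tfrac12(P+P^{\T})-I\prec 0$ directly, using $\|P\|_\infty\le 1$, $\|P\|_1\le 1$ (the latter being exactly the no-merging hypothesis), and the forest structure for strictness, which lets you apply the boundary gap to $\|\mathbf e\|$ itself rather than to $\|(P^{\T}-I)\mathbf e\|$.

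Both routes are valid; yours is a clean linear-algebraic repackaging of the same inequality. One small point to tighten: your tree edge-count sentence (``the value equals (number of edges)/(number of nodes)'') is literally correct only for deterministic routing; with fractional $P_{jj'}$ the relevant quantity is the sum of edge weights over the tree, which is still at most $|C|-1<|C|$ since a tree on $|C|$ nodes has $|C|-1$ edges each of weight $\le 1$. Alternatively, strictness follows in one line from the paper's identity above: $\mathbf e^{\T}M\mathbf e\le -\tfrac12\|(P^{\T}-I)\mathbf e\|^2$, and $(I-P^{\T})$ is invertible because the network is open, so $\mathbf e\ne 0$ forces $\mathbf e^{\T}M\mathbf e<0$.
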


\subsection{Stability of Weighted MaxWeight.}
\label{subsectionweightmaxweight}
We now consider the stability of the weighted MaxWeight policy in \eqref{eq1wtmaxwt}.
Since the scheduling policy depends on $\bm \rho$, weighted MaxWeight is less applicable in practice than is MaxWeight.  Theorem \ref{thrm:p-MW}  shows, however, that the queueing network process associated with the weighted MaxWeight policy is positive recurrent under $\vecrho \in \mC$, without any restriction on the network topology or mean workload. 
The main tool in the proof of Theorem \ref{thrm:p-MW} will be the Lyapunov function $h(t)$ defined in (\ref{defofh}), which  indicates why the weighting by $\bm \rho$ is needed mathematically to 
stabilize MaxWeight in multihop networks.
%



Much of the work needed for Theorem \ref{thrm:p-MW} is done in the following lemma.  We write $h(t) = f(t) - g(t)$, and analyze the two parts separately.
\begin{lemma}\label{lem:technical}
Define the functions $f(t)$, $g(t)$ and $h(t)$ by
\[
f(t)
=
\sum_{j\in \mathcal J} 
	Q_j (t)
		\frac{D'_j(t)}{\lambda_j}\,,
\qquad
g(t) =\sum_{j\in \mathcal J} 
	Q_j (t) \,,
\qquad 
\text{and}
\qquad 
h(t) 
= 
f(t)-g(t)\,.
\]
Then
	\begin{equation*}
	h'(t)
	\leq 
	-
	\frac{1}{2}
	\sum_{j,j'\in \mathcal J} 
		\lambda_j P_{jj'} 
		\left( \frac{D'_j(t)}{\lambda_j}-  \frac{D'_{j'}(t)}{\lambda_{j'}} \right)^2
	-
	\frac{1}{2}
	\sum_{j\in \mathcal J } 
		a_j
		\left( 
			\frac{D'_j(t)}{\lambda_j} - 1 
		\right)^2 
	-
	\frac{1}{2}
	\sum_{j\in \mathcal J}
		\lambda_j
		P_{j\omega} 
		\left( 
			\frac{D'_{j}(t)}{\lambda_j}
			-
			1
		\right)^2\, . 
\end{equation*}
\end{lemma}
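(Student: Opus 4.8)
The plan is to mirror the left-derivative computation from the proof of Theorem~\ref{tandemtheorem}, now carried out in the general routing setting, and then to execute a completion-of-squares identity that is driven by the traffic equations. Write $u_j(t) := D'_j(t)/\lambda_j - 1$, so that $h(t) = \sum_{j\in\mathcal J} Q_j(t)\,u_j(t)$; note that the weighted objective appearing in \eqref{argmax} equals $\sum_{j} Q_j\, D'_j/\lambda_j$, since the scheduled rate satisfies $D'_j/\lambda_j = \sigma_j/\rho_j$.

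First I would bound the left derivative of $h$. For $\delta < 0$, the optimality of $\bm D'(t+\delta)$ for the queue vector $\bm Q(t+\delta)$, together with the feasibility (hence suboptimality) of $\bm D'(t)$ as a schedule, gives $f(t+\delta) \ge \sum_{j} Q_j(t+\delta)\,D'_j(t)/\lambda_j$, whence
\[
\frac{h(t+\delta) - h(t)}{\delta} \le \sum_{j\in\mathcal J} \frac{Q_j(t+\delta) - Q_j(t)}{\delta}\, u_j(t),
\]
the direction of the inequality being reversed because $\delta < 0$. Letting $\delta \nearrow 0$ yields the almost-everywhere bound $h'(t) \le \sum_{j\in\mathcal J} Q'_j(t)\, u_j(t)$, exactly as in the passage leading to \eqref{eq:h_bound}.

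Next I would substitute the fluid equations. From \eqref{eq:FN1}--\eqref{eq:FN2} we have $Q'_j = a_j + \sum_{j'} D'_{j'} P_{j'j} - D'_j$, and by definition $D'_j = \lambda_j(1 + u_j)$. Expanding $\sum_j Q'_j u_j$ and relabelling the double sum via $j \leftrightarrow j'$ separates it into a part that is linear in $\bm u$ and a part that is quadratic. The linear part is $\sum_j a_j u_j + \sum_{j'}(\lambda_{j'} - a_{j'}) u_{j'} - \sum_j \lambda_j u_j$, which vanishes identically by the traffic equation in the form $\sum_j \lambda_j P_{jj'} = \lambda_{j'} - a_{j'}$. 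This cancellation is the essential use of $\bm\lambda$ solving \eqref{eq:Traffic}, and it leaves $\sum_j Q'_j u_j = \sum_{j,j'} \lambda_j P_{jj'}\, u_j u_{j'} - \sum_j \lambda_j u_j^2$.

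Finally I would complete the squares. Expanding the claimed right-hand side and using both $\sum_{j'} P_{jj'} = 1 - P_{j\omega}$ and $\sum_j \lambda_j P_{jj'} = \lambda_{j'} - a_{j'}$, one checks that the three negative square terms collapse to precisely $\sum_{j,j'} \lambda_j P_{jj'}\, u_j u_{j'} - \sum_j \lambda_j u_j^2$, matching the expression just obtained; the $a_j u_j^2$ and $\lambda_j P_{j\omega} u_j^2$ contributions arising from the expansion cancel against the two separate square terms in the target. I expect the main obstacle to be the bookkeeping in this quadratic identity---correctly tracking which index carries the weight $\lambda_j P_{jj'}$ and invoking the two routing identities in the right places---rather than any conceptual difficulty, since the derivative inequality itself is immediate from weighted MaxWeight optimality.
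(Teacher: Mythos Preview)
Your proposal is correct and follows essentially the same approach as the paper: the optimality-based left-derivative bound $h'(t)\le\sum_j Q'_j(t)\,u_j(t)$, substitution of the fluid equations, and completion of squares using the traffic equations~\eqref{eq:Traffic}. Your centering $u_j=D'_j/\lambda_j-1$ is a mild streamlining of the paper's argument---the paper bounds $f'(t)$ and computes $g'(t)$ separately before combining, applying the polarization identity to the uncentered ratios $D'_j/\lambda_j$, whereas your substitution causes the linear terms to cancel immediately and reduces the computation to verifying a single quadratic identity---but the underlying structure is identical.
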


\begin{proof}
Using reasoning similar to that in the proof of Theorem \ref{tandemtheorem}, we bound the left
derivative of $f(t)$ from above by first observing that, for $\delta < 0$,
\begin{align*} 
\frac{f(t+\delta) - f(t)}{\delta}  
 \leq 
\sum_{j\in \mathcal J} 
	\frac{ Q_{j}(t+\delta)  - Q_{j}(t) }{\delta} \cdot
\frac{{D}'_{j}(t)}{\lambda_j} \, ,
\end{align*}
which follows from the optimality for weighted MaxWeight of $({D}'_{j}(t+\delta): j\in\mathcal J)$ for queue sizes  $(Q_{j}(t+\delta) : \in\mathcal J)$ and the suboptimality of $({D}'_{j}(t): j\in\mathcal J)$. 
Taking limits implies that
\begin{align*}
f'(t) 
&
= 
\lim_{\delta \nearrow 0} 
	 \frac{f(t+\delta) - f(t)}{\delta}  
\leq  \,
\lim_{\delta \nearrow 0} 
\sum_{j\in \mathcal J}
	\frac{ Q_{j}(t+ \delta)  - Q_{j}(t) }{\delta}
\cdot \frac{{D}'_{j}(t) }{\lambda_j}
= \,
\sum_{j\in \mathcal J} Q_{j}'(t) \frac{D'_{j}(t)}{\lambda_j} \, .
\end{align*}

Rewriting this upper bound, we obtain

\begin{align}
f'(t) 
& \leq
\sum_{j \in \mathcal J}
 \Big( a_j + \sum_{j'\in \mathcal J} D'_{j'}(t) P_{j' j }  - D'_j(t)   \Big) \frac{D'_j(t)}{\lambda_j} 
 \notag
 \\[.1em]
& =
\sum_{j \in \mathcal J}
	a_j \frac{D'_j(t)}{\lambda_j}
-
\sum_{j\in\mathcal J} \lambda_j \left( \frac{D'_j(t)}{\lambda_j} \right)^2
+
\sum_{j, j'\in\mathcal J} \lambda_j {P_{jj'}} \left[ \frac{D'_{j}(t)}{\lambda_j} \cdot
\frac{D'_{j'}(t) }{\lambda_{j'}} \right]
\notag
\\[.05em]
& =
\sum_{j \in \mathcal J}
	a_j \frac{D'_j(t)}{\lambda_j}
-
\sum_{j\in\mathcal J} \lambda_j \left( \frac{D'_j(t)}{\lambda_j} \right)^2
\notag
\\[.25em]
 &\quad +
\sum_{j, j'\in\mathcal J} \lambda_j  {P_{jj'}}
\left[
-\frac{1}{2}\left(
	\frac{D'_{j}(t)}{\lambda_j}
	-
	\frac{D'_{j'}(t) }{\lambda_{j'}} 
\right)^2
+
\frac{1}{2}
\left(
\frac{D'_j(t)}{\lambda_j}
\right)^2
+
\frac{1}{2}
\left(
\frac{D'_{j'}(t)}{\lambda_{j'}}
\right)^2
\right] \ .
\label{f_Dash_1}
\end{align}
The inequality is obtained by applying the identities \eqref{eq:FN1} and \eqref{eq:FN2} to the right-hand side of the previous display, and the two equalities follow by rearranging the terms and applying the identity
 $x \, y = [x^2 + y^2-(x-y)^2]/2$  with $x={D'_{j}(t)}/{\lambda_{j}}$ and $y={D'_{j'}(t)}/{\lambda_{j'}}$ . 

 We sum the second term in brackets over $j'$ by introducing 
$P_{j \omega}:= 1 - \sum_{j' \in \mathcal J }  {P_{jj'}}$,
 then adding and subtracting the quantity $\frac{1}{2} \, \sum_{j\in \mathcal J} a_j \left({D'_j(t)}/{\lambda_j}\right)^2$, and relabeling the
 indexes $(j,j')$ as $(j',j)$ in the last term in the brackets to obtain 

\begin{align}
f'(t)
& 
\leq 
\sum_{j \in \mathcal J}
	a_j \frac{D'_j(t)}{\lambda_j}
-\frac{1}{2}
\sum_{j, j'\in\mathcal J} \lambda_j  {P_{jj'}}
\left(
	\frac{D'_{j}(t)}{\lambda_j}
	-
	\frac{D'_{j'}(t) }{\lambda_{j'}} 
\right)^2
- \frac{1}{2}
\sum_{j\in \mathcal J} 
	a_j 
	\left( 
		\frac{D'_j(t)}{\lambda_j}
	\right)^2
\notag 
\\
&
\quad 
-
\frac{1}{2} 
\sum_{j \in \mathcal J}
\lambda_j P_{j \omega} 
\left( 
\frac{D'_j(t)}{\lambda_j}
\right)^2
+
\frac{1}{2}
\sum_{j\in \mathcal J} 
\left\{
-\lambda_j
+
\sum_{j' \in \mathcal J} \lambda_{j'} P_{j'j}
+
a_j
\right\} 
\left( \frac{D'_j(t)}{\lambda_j}\right)^2
\notag
\\[.25em]
&
=
\sum_{j \in \mathcal J}
	a_j \frac{D'_j(t)}{\lambda_j}
-\frac{1}{2}
\sum_{j, j'\in\mathcal J} \lambda_j  {P_{jj'}}
\left(
	\frac{D'_{j}(t)}{\lambda_j}
	-
	\frac{D'_{j'}(t) }{\lambda_{j'}} 
\right)^2 
- \frac{1}{2}
\sum_{j\in \mathcal J} 
	a_j 
	\left( 
		\frac{D'_j(t)}{\lambda_j}
	\right)^2
\notag 
\\[.2em] 
& \quad 
-
\frac{1}{2} 
\sum_{j \in \mathcal J}
\lambda_j P_{j \omega} 
\left( 
\frac{D'_j(t)}{\lambda_j}
\right)^2 \ .
\label{f_Dash_2}
\end{align}
For the equality, we note that the term in braces is zero due the traffic equations \eqref{eq:Traffic}. 

On the other hand, since $\sum_{j'\in \mathcal J }  {P_{jj'}} = 1-P_{j \omega}$,
\begin{equation}\label{g_Dash}
	g'(t) = \sum_{j\in \mathcal J} Q'_j(t) 
=
\sum_{j\in \mathcal J}
 \Big( a_j + \sum_{j'\in \mathcal J} D'_{j'}(t) P_{j' j }   - D'_j(t)   \Big) \
=
\sum_{j\in \mathcal J} a_j 
- 
\sum_{j\in \mathcal J} D'_j(t) P_{j \omega} \ .
\end{equation}
Combining \eqref{f_Dash_2} with \eqref{g_Dash} implies that
\begin{align*}
	 h'(t) 
	&
	=
		f'(t) - g'(t)
	\\
	\leq 
	&
	\sum_{j \in \mathcal J}
	a_j \frac{D'_j(t)}{\lambda_j}
-\frac{1}{2}
\sum_{j, j'\in\mathcal J} \lambda_j  {P_{jj'}}
\left(
	\frac{D'_{j}(t)}{\lambda_j}
	-
	\frac{D'_{j'}(t) }{\lambda_{j'}} 
\right)^2
- 
\frac{1}{2}
\sum_{j\in \mathcal J} 
	a_j 
	\left( 
		\frac{D'_j(t)}{\lambda_j}
	\right)^2
\\
 & 
 -
\frac{1}{2} 
\sum_{j \in \mathcal J}
\lambda_j P_{j \omega} 
\left( 
\frac{D'_j(t)}{\lambda_j}
\right)^2
-
\sum_{j\in \mathcal J} a_j 
+
\sum_{j\in \mathcal J} D'_j(t) P_{j \omega} 
\\[.25em]
				=
				&
	-
	\frac{1}{2}
	\sum_{j,j'\in \mathcal J} 
		\lambda_j P_{jj'} 
		\left( \frac{D'_j(t)}{\lambda_j}-  \frac{D'_{j'}(t)}{\lambda_{j'}} \right)^2
	-
	\frac{1}{2}
	\sum_{j\in \mathcal J } 
		a_j
		\left( 
			\frac{D'_j(t)}{\lambda_j} - 1 
		\right)^2 
	-
	\frac{1}{2}
	\sum_{j\in \mathcal J}
		\lambda_j
		P_{j\omega} 
		\left( 
			\frac{D'_{j}(t)}{\lambda_j}
			-
			1
		\right)^2 \,;
\end{align*}
the equality uses  $\sum_{j \in \mathcal J }  a_j = \sum_{j \in \mathcal J }  \lambda_j P_{j\omega }$, which is obtained by summing \eqref{eq:Traffic} over $j$.
This gives the
 desired expression.
\end{proof}

\medskip

We now prove Theorem \ref{thrm:p-MW}. The proof is similar to that of Theorem \ref{tandemtheorem}. 

\medskip

\begin{proof}[{Proof of Theorem \ref{thrm:p-MW}.}]
We define $h(t)$ as in  (\ref{defofh}) and Lemma \ref{lem:technical}.  
Provided $\bm Q(t) \neq 0$, the weighted MaxWeight schedule $( \sigma_j: j\in \mathcal J )$ belongs to the boundary of the set $< \mS >$, whereas the vector $\bm \rho$ does not.  Therefore, $h(t) >0 $ whenever $\bm Q(t) \neq 0$ and,
for some fixed $\epsilon$ (depending only on $\bm \rho$ and $\mS$), there exists  $j^*$ such that
\begin{equation}\label{pareto.optimality}
\frac{D'_{j^*}(t)}{\lambda_{j^*}} - 1  > \epsilon\, .
\end{equation}

There exists a path $j_1, \ldots, j_L = j^*$, with $L \le |\mathcal J|$ and satisfying $a_{j_1} >0$ and $P_{j_{\ell}j_{\ell +1}}>0$ for $\ell = 1,\ldots L-1$.  Consequently, by
(\ref{pareto.optimality}),
%
\begin{equation} \label{pairofinequalities}
\text{either} \quad
	\left| 
	 \frac{D'_{j_{\ell^*-1}}(t)}{\lambda_{j_{\ell^*-1}}} 
	 -
	 \frac{D'_{j_{\ell^*}}(t)}{\lambda_{j_{\ell^*}}} 
	\right| 
	> 
	\frac{\epsilon}{|\mathcal J|}
	\,\,\,\text{ for some } \ell^* \in [2,j^*]\,,
	\quad\text{or}
	\quad 
	\left| 
	 \frac{D'_{j_1}(t)}{\lambda_{j_1}} 
	 -
	 1
	\right| 
	> 
	\frac{\epsilon}{|\mathcal J|}\, .
\end{equation}
Together with  Lemma \ref{lem:technical}, (\ref{pairofinequalities}) implies
%
%
 that,
for appropriate $c>0$ and all $t$ satisfying $\bm Q (t) \neq 0$, 
\begin{equation} \label{bound on h'}
h'(t) \leq  -c\frac{\epsilon^2}{ |\mathcal J|^2}\,.
\end{equation}

Recall that $h(t) \ge 0$, since $\rho \in \mathcal C$,   and $h'(t) \le 0$ by Lemma \ref{lem:technical}. Moreover, $h(0)$ is uniformly bounded for all
$\bm Q(0)$ satisfying $\sum_j Q_j(0)  = 1$.
For such $\bm Q(0)$,  (\ref{bound on h'}) implies
 a uniform bound  $t_0$ exists with  $h(t) = 0$ for all $t \geq t_0$.  Consequently, $\bm Q(t) = 0$ for all $t \geq t_0$.
By Proposition \ref{propFMQN}, the weighted MaxWeight queueing network is therefore positive recurrent.
\end{proof}

%

\section{Stability and Instability in Multiclass Switched Networks.} \label{sec:multiclass}

\begin{figure}

	\centering
%
\begin{tikzpicture}[scale=1]
\def\hsp{0.25} 
\def\vsp{0.5} 
\def\arrL{2} 
\def\stSep{4.75}

\tikzset{
    pic shift/.store in=\shiftcoord,
    pic shift={(0,0)},
    queue/.pic = {
        \begin{scope}[shift={\shiftcoord}]
            \coordinate (_arr) at (-0.25,0);
            \coordinate (_in) at (0.5,0);
            \coordinate (_dep) at (1,0);
            \draw (0,0.25) -- (1,0.25) -- (1, -0.25) -- (0,-0.25);
         \end{scope}
    }
}

	\node (A) [draw, thick, shape=rectangle, minimum width=3.5cm, minimum height=3.5cm, anchor=center] 
		at (0,0.5)  {\raisebox{6.5em}{$\mathcal{A}$}};
	\path pic (A0) at (-0.25,1) {queue} (A0_in) node {$\mathcal{A}_0$};
	\path pic [rotate=180] (A1) at (0.75,0) {queue} (A1_in) node {$\mathcal{A}_1$};
	
	\node (B) [draw, thick, shape=rectangle, minimum width=3.5cm, minimum height=3.5cm, anchor=center] 
		at (4.5,0.5) {\raisebox{-8em}{$\mathcal{B}$}};
	\path pic (B1) at (3.75,1) {queue} (B1_in) node {$\mathcal{B}_1$};
	\path pic [rotate=180] (B0) at (4.75,0) {queue} (B0_in) node {$\mathcal{B}_0$};

	\draw [<-] (A0_arr.west) -- ++(-1.5,0) node [left]{$1$};
	\draw [->] (A0_dep.east) -- (B1_arr.east);
	\draw [dotted, ->] (A1_dep.west) -| ++(-0.75,0) |- ([shift={(0.5,-.5)}]A1_arr.west) node [below, pos=0.75]{\tiny $K-1$ returns}  |- ([shift={(0,-.1)}]A1_arr.west);
	\draw [->] (A1_dep.west) ++(-0.75,0) -- ++(-1.25,0);
	\draw [<-] (B0_arr.east) -- ++(1.5,0) node [right]{$1$}; 
	\draw [dotted, ->] (B1_dep.west) -| ++(+0.75,0) |- ([shift={(-0.5,.5)}]B1_arr.west) node [above, pos=0.75]{\tiny $K-1$ returns}  |- ([shift={(0,.1)}]B1_arr.west);
	\draw [->] (B1_dep.east) ++(0.75,0) -- ++(1.25,0); 
	\draw [->] (B0_dep.west) -- (A1_arr.east);
		
\end{tikzpicture}
%
	\quad
%
\begin{tikzpicture}[scale=0.25]


	\draw[fill=gray!20] (0,0) -- 
		(3,0) node[below]{\tiny $1/\epsilon$} -- 
		(0,9) node[left]{\tiny $1/\epsilon^2$} -- 
		cycle;

	\draw[->, thick] (0,0)--(10,0) node[right]{$\sigma_{\mA_0}$};
	\draw[->, thick] (0,0)--(0,10) node[above]{$\sigma_{\mA_1}$};
	
	\node[circle, draw=white, fill=black,line width=0.2mm, inner sep=0pt, minimum size=2.5pt
	] at (0.29,7.9) {};
	\node[circle, draw=white, fill=white, inner sep=0pt, minimum size=0.5pt
	] at (0.29,7.9) (A) {};
	\draw [<-](0.7,7.9) -- (2,7.9) -- (4,6.9) -- (4.7,6.9) node[right]{\tiny $(\rho_{\mA_0},\rho_{\mA_1})=(1,K)$};

		
\end{tikzpicture}
%
	\caption{A multiclass switched queueing network. 
	On the right is the projection on the space $(\sigma_{\mA_0},\sigma_{\mA_1})\in\bR^2$ of the set $<\mS>$ given in (\ref{formulticlasscounterexample}).
	\label{Fig6}}
\end{figure}
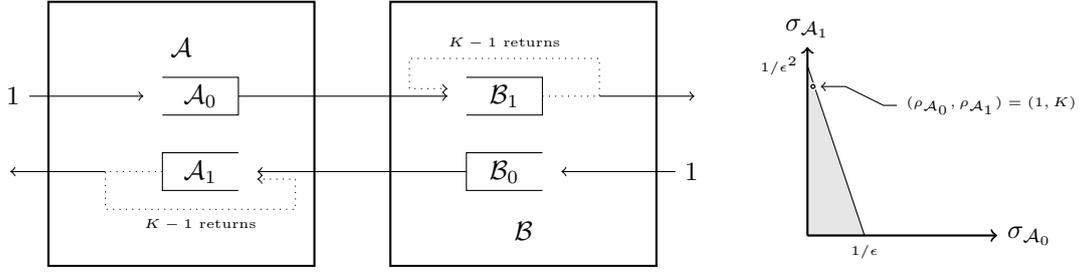

The networks that we have considered so far have been single class.
%
 In this section, 
we discuss results on the stability of MaxWeight for multiclass switched queueing networks.

%
%



A multiclass queueing network is a network that permits more than one class of job at a queue; we label the set of classes belonging to a queue $j$ by $\mCl(j)$ and by  ${\mathcal K} $ the set of all classes of jobs.   
Different classes at a queue can differ in the service times of their jobs and in the routes the jobs might take after service is completed at the queue.

Employing  definitions and notation that are analogous to those in Section \ref{sec:model}, we assume that the number of external arrivals of class $k$ at each time are independent identically distributed random variables with mean $a_{k}$. A class $k$ job has a size that is geometrically distributed on $\bZ_+$ with parameter $p_{k}$. (After receiving one unit of service at queue $j$, a class $k$ job, $k\in \mCl(j)$, has probability of departure $p_{k}$.)
 When a class $k$ job is served, the job is routed to class $k'$ with probability $
P_{k k'}$ and otherwise leaves the network. 
  The total arrival rate $\lambda_k$ of jobs at class $k$ is given by the solution to the traffic equations
\begin{equation*}
	\lambda_k
	= 
	a_k
	+
	\sum_{k'\in \mathcal K}
	\lambda_{k'} 
	P_{k' k}\, , \qquad k \in \mathcal K\,.
\end{equation*}

The traffic intensity is $\bm \rho = (\rho_j : j \in \mathcal J)$, where  $\rho_j = \sum_{k\in \mCl(j)} \tilde{\rho}_{k}$ and $\tilde{\rho}_{k} =\lambda_k/ p_k$.
The scheduling set $\mS \subset \bZ^{|\mJ|}_+$  is defined as in Section \ref{sec:intro}, with the subcritical region $\mC$ being the interior of 
$<\mS>$ and the associated switched queueing network process $\vecQ(t)$ being subcritical if $\bm \rho \in \mC$. The MaxWeight policy is again given in display (\ref{eq1}), where $Q_j = \sum_{k\in \mCl(j)}\tilde{Q}_{k}$ is the total number of jobs at queue $j$, with $\tilde{Q}_{k}$ being the number of jobs in class $k$, and  $\sigma_j = \sum_{k\in \mCl(j)}\tilde{\sigma}_{k}$ is the work devoted to queue $j$, with $\tilde{\sigma}_{k}$ being the work devoted to class $k$.


Unlike single class switched queueing networks, the order of service of jobs in a queue can influence the stability of a multiclass switched network.
In Subsection \ref{sec:multi-instab},  we will assume the policy is first-in, first-out (FIFO), with jobs being served at a queue in the order of their arrival at their current class.  We will furthermore assume that the switched queueing network processes are of Kelly type, i.e., the mean service times satisfy $m_k = m_{k'}$ for all classes $k,k' \in \mCl(j)$, $j\in \mJ$. We present an example (without proof) of such a subcritical switched queueing network for which the associated queueing network process is transient.  This behavior is distinct from that for the ProportionalScheduler policy in Section \ref{sec:Conc} and for queueing network processes without switching, where subcritical FIFO queueing networks of Kelly type must be positive recurrent.  The example is depicted in Figure \ref{Fig6}.

In Subsection \ref{stableMC}, we will consider the policy that discriminates between job classes within a queue by first serving the class with the greatest weighted number of jobs.   Reasoning similar to that in the proof of Theorem \ref{thrm:p-MW} shows that the subcritical switched queueing network process for this non-FIFO policy is always positive recurrent.

\subsection{A Family of Unstable Multiclass Switched Queueing Networks of Kelly Type.}\label{sec:multi-instab}

For the multiclass switched queueing network in Figure \ref{Fig6}, jobs arrive at rate $1$ at the single class queues $\mA_0$ and $\mB_0$, and require $1$ unit of service at each of these queues and $1$ unit of service at each of the $K$ classes of the queues 
$\mA_1$ and $\mB_1$.  So, the corresponding switched queueing network process ${\bf Q}(t)$ is of Kelly type; 
moreover, $\tilde{\rho}_k =1$ for all $k$.
We choose $\epsilon >0$ so that $K= (1-2\epsilon)/\epsilon^2$ and denote the $K$ classes at $\mA_1$ and $\mB_1$ by $\mA_1^{(1)}, \ldots, \mA_1^{(K)}$ and $\mB_1^{(1)},\ldots,\mB_1^{(K)}$, respectively.

The set $\mS$ of feasible service options is defined by
\begin{align}
\label{formulticlasscounterexample}
{\epsilon \sigma_{\mathcal{A}_0}}{}
+
\epsilon^2 \sigma_{\mA_1}
\leq 1
\qquad  \text{and} \qquad
{\epsilon \sigma_{\mathcal{B}_0}}
+
\epsilon^2 
	 \sigma_{\mB_1}
\leq 1\, .
\end{align}
It is easy to check that $\bm \rho \in \mC$ for the above switched network.
On the other hand, for $\epsilon$ close to $0$, the process ${\bf Q}(t)$ will be transient.  We do not prove this assertion here, but the proof is similar to that for the family of networks in Theorem \ref{thrm:MW}.  

\smallskip

REMARKS.
The networks in Figure \ref{Fig2} and Figure \ref{Fig6} are both modeled after the Rybko--Stolyar network; the Lu--Kumar reentrant line could be employed in an analogous manner in each case.  
An example of a subcritical single class switched queueing network that is unstable is obtained by modifying Figure \ref{Fig6} by ``collapsing'' the $K$ classes in $\mA_1$ and $\mB_1$ into single classes having geometric service times with mean 
$m = (1-2\epsilon)/\epsilon^2$, with $\mS$ again being given by  
(\ref{formulticlasscounterexample}).  
The queueing network in Figure \ref{Fig6} and its collapsed analog will evolve similarly.  

\begin{figure}[ht!]
	\begin{minipage}{0.47\textwidth}
		\centering
		\includegraphics[width=\textwidth]{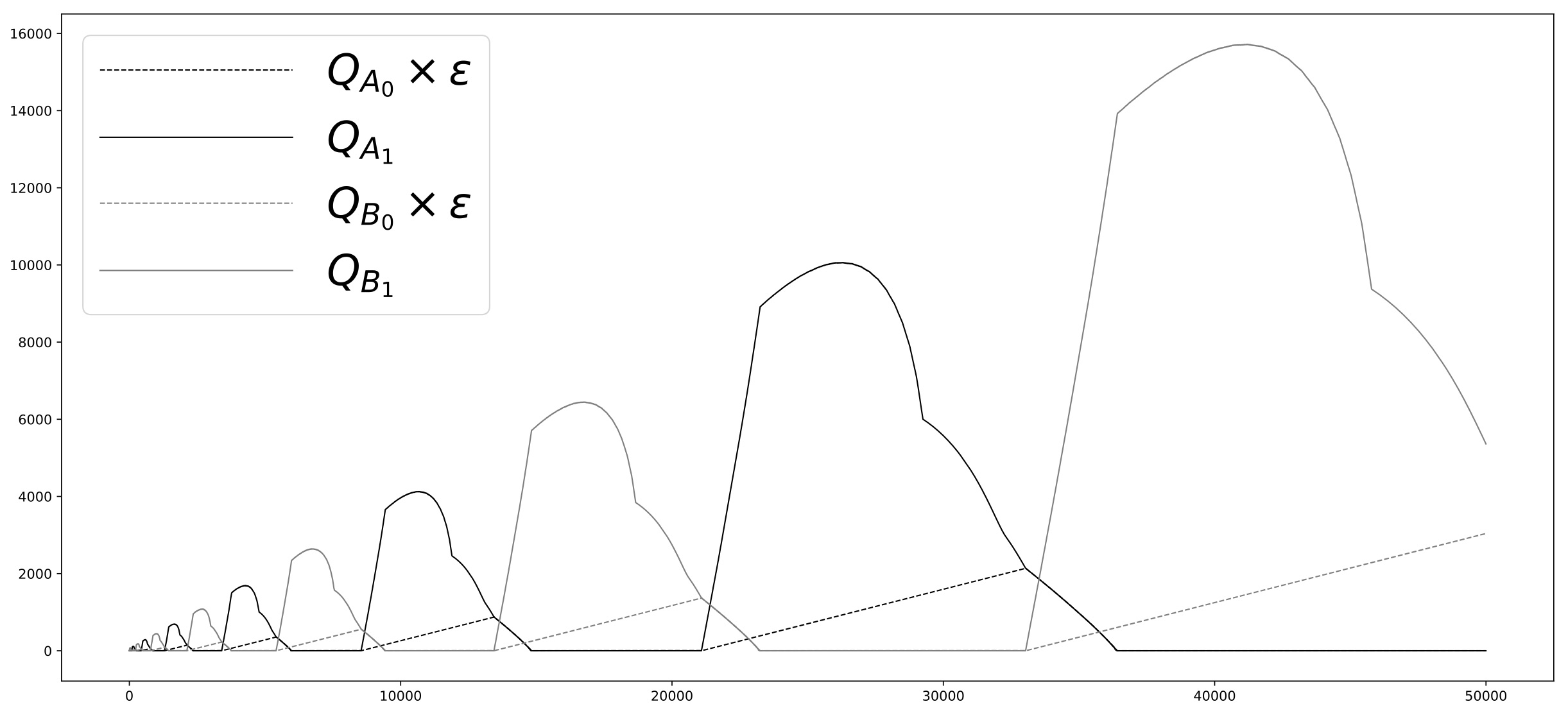}
		\caption{A simulation of the network depicted in Figure \ref{Fig6} for parameters
		$a = 1$, $\epsilon = 0.1791$ and $K = 20$. 
		\break
		The initial value is $Q_{\mA_0}(0)= 55$, with all other queues empty.
		\label{Fig7}}
	\end{minipage}
	\hfill
	\begin{minipage}{0.47\textwidth}
		\centering
		\includegraphics[width=\textwidth]{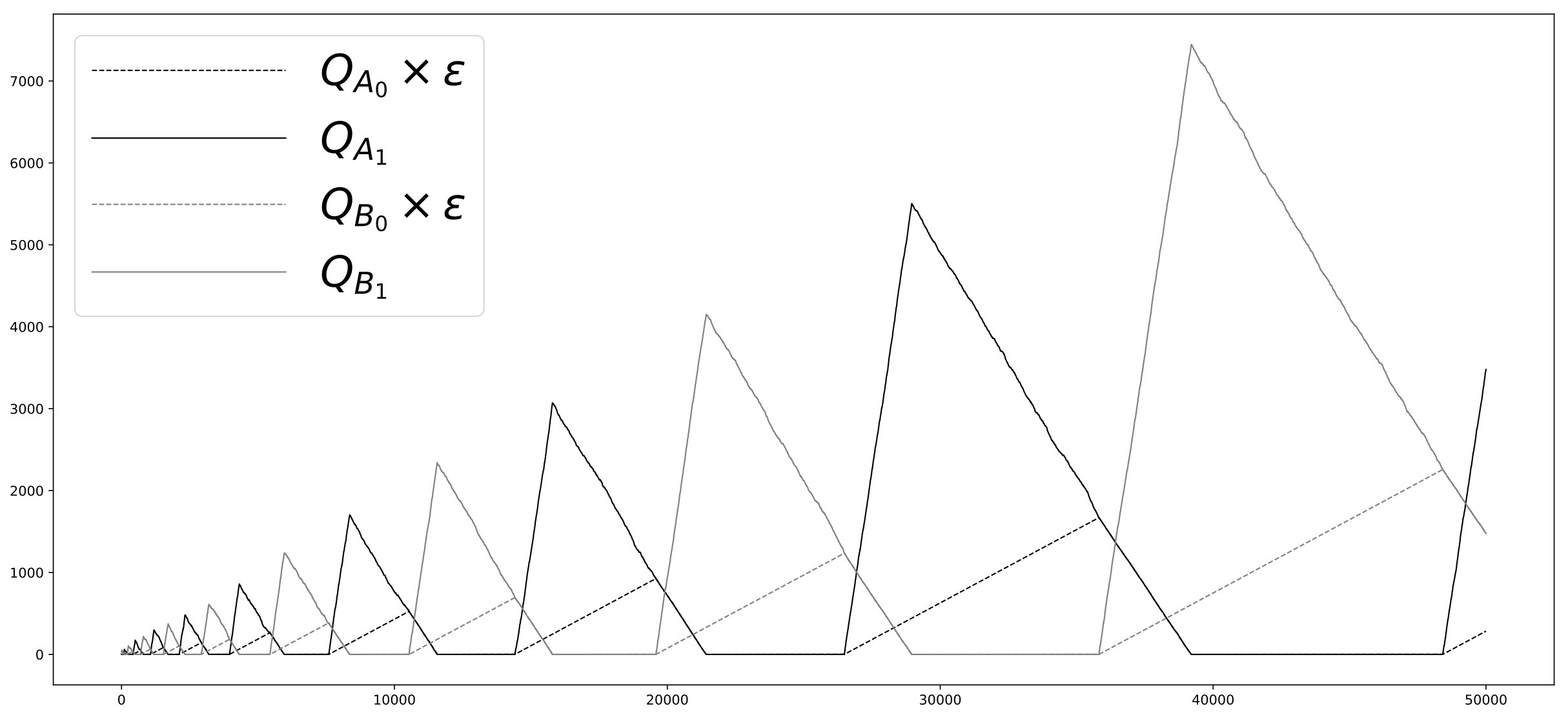}
		\caption{A simulation of the ``collapsed'' analog of the network simulated in Figure \ref{Fig7}
		for the same parameters, with $m = 20\, (= K)$.
		The initial value is the same, with $Q_{\mA_0}(0)= 55$ and all other queues empty. 
		\label{Fig8}}
	\end{minipage}
\end{figure}

Figure \ref{Fig7} shows a simulation for the switched queueing network in Figure \ref{Fig6}. 
The simulation is given for parameters $a = 1$, $\epsilon = 0.1791$ and $K = 20$. 
These simulations strongly suggest that the queueing network is unstable.

Figure \ref{Fig8} shows the evolution of the collapsed analog of the network simulated in Figure \ref{Fig7}.
The mean service times in $\mA_1$ and $\mB_1$ are equal to $m = 20$.
In this case as well, simulations strongly suggest that the single class switched queueing network is unstable.

For the reader interested in more carefully comparing Figure \ref{Fig7} and Figure \ref{Fig8}, note that the vertical scale in Figure \ref{Fig7} is approximately double that in Figure \ref{Fig8}, and consequently the loss of jobs from the $\mA_1$ and $\mB_1$ queues past their peak sizes during a cycle is also approximately twice as quick.  Both are due to the much shorter amount of service required by a job for individual classes of the $\mA_1$ and $\mB_1$ queues in the multiclass network relative to the service required by a job of the $\mA_1$ and $\mB_1$ queues in the collapsed analog ($1$ unit of time versus $(1-2\epsilon)/\epsilon^2$ units of time).  This makes it possible for large numbers of partially served jobs in the multiclass network to quickly complete their service at the $\mA_1$ and $\mB_1$ queues.


\subsection{A Family of Non-FIFO Stable Multiclass Switched Networks.}
\label{stableMC}
In this subsection, we show positive recurrence for multiclass switched networks under a modification of the weighted MaxWeight policy.
A severe drawback of this approach is that knowledge of the number of jobs in each class, rather than in each queue, is required.


The  \emph{largest-class weighted MaxWeight} optimization problem is given by:
\begin{align}\label{eq:MultiMax}
&\text{maximize}\quad\sum_{j\in\mJ} Q^*_j \sigma_j  \quad
\text{over} \quad  {\vecsigma} \in {\mS}\,,
\end{align}
where  
\begin{equation}\label{eq:MultiMax2}
Q^{*}_j := \max_{k\in \mCl(j)} \tilde{Q}_{k}/\tilde{\rho}_{k}\,.
\end{equation}
This optimization maximizes the objective function with respective to the weighted largest class $Q^{*}_j$ at queue $j$, rather than with respect to $Q_j$.  The entire work at queue $j$ is then devoted to jobs of the class $k\in \mCl (j)$ where $\tilde{Q}_{k}/\tilde{\rho}_{k}$  is maximized.

\begin{theorem}
Consider the largest-class weighted MaxWeight policy for a multiclass switched network.  If $\bm {\rho} \in \mC$, then the associated queueing network process $\bm{\mathcal{Q}}$ is positive recurrent.
\end{theorem}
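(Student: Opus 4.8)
The plan is to show that the largest-class weighted MaxWeight policy is, when read at the level of \emph{classes} rather than queues, nothing but an ordinary weighted MaxWeight policy of the type treated in Theorem \ref{thrm:p-MW}, and then to run the fluid-model argument of that theorem essentially verbatim with $\mathcal J$ replaced by the class set $\mathcal K$. Throughout I would restrict to classes $k$ with $\lambda_k > 0$; a class with $\lambda_k = 0$ receives no fluid and may be discarded. I would first write down the class-level fluid model with variables $(\tilde Q_k,\tilde A_k,\tilde D_k,\tilde\Pi_k)_{k\in\mathcal K}$ satisfying the analogs of (\ref{eq:FN1})--(\ref{eq:FN3}), namely $\tilde Q_k(t)=\tilde Q_k(0)+\tilde A_k(t)-\tilde D_k(t)$, $\tilde A_k(t)=a_k t+\sum_{k'}\tilde D_{k'}(t)P_{k'k}$, and $\tilde D_k(t)=p_k\tilde\Pi_k(t)$. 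The feasible class-level service set is the lift $<\tilde{\mS}>\,=\{\tilde{\bm\sigma}\ge 0:\,(\sum_{k\in\mCl(j)}\tilde\sigma_k)_{j\in\mJ}\in\,<\mS>\}$, obtained by distributing the work $\sigma_j$ at each queue among its classes; since the aggregation is linear, this set is convex.

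The key reduction is the following. Because $\tilde D_k'(t)=p_k\tilde\Pi_k'(t)=p_k\tilde\sigma_k$ and $\tilde\rho_k=\lambda_k/p_k$, we have $\tilde D_k'(t)/\lambda_k=\tilde\sigma_k/\tilde\rho_k$, so that
\[
\sum_{k\in\mathcal K}\tilde Q_k(t)\frac{\tilde D_k'(t)}{\lambda_k}=\sum_{j\in\mJ}\sum_{k\in\mCl(j)}\frac{\tilde Q_k(t)}{\tilde\rho_k}\,\tilde\sigma_k .
\]
For fixed queue-level work $\sigma_j=\sum_{k\in\mCl(j)}\tilde\sigma_k$, the inner sum is a linear function over a simplex and is maximized by placing all of $\sigma_j$ on the class attaining $Q_j^\ast=\max_{k\in\mCl(j)}\tilde Q_k/\tilde\rho_k$, which is exactly what the largest-class policy does in (\ref{eq:MultiMax})--(\ref{eq:MultiMax2}). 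Maximizing next $\sum_j Q_j^\ast\sigma_j$ over $\vecsigma\in\mS$ shows that the policy selects $\tilde{\bm\Pi}'(t)\in\argmax_{\tilde{\bm\sigma}\in<\tilde{\mS}>}\sum_{k}\tilde Q_k(t)\,\tilde\sigma_k/\tilde\rho_k$. Thus the largest-class weighted MaxWeight fluid model is precisely the weighted MaxWeight fluid model of Subsection \ref{sec:Fluid_Model}, with queues replaced by classes, routing $P$ replaced by $(P_{kk'})$, and $(\lambda_j,\rho_j)$ replaced by $(\lambda_k,\tilde\rho_k)$.

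With this identification I would take the class-level Lyapunov function $h(t)=\sum_{k\in\mathcal K}\tilde Q_k(t)\big(\tilde D_k'(t)/\lambda_k-1\big)$ and apply Lemma \ref{lem:technical} verbatim—its proof uses only the fluid balance equations, optimality of the schedule, and the traffic equations, all of which now hold at the class level—to obtain
\[
h'(t)\le-\tfrac12\sum_{k,k'\in\mathcal K}\lambda_k P_{kk'}\Big(\tfrac{\tilde D_k'(t)}{\lambda_k}-\tfrac{\tilde D_{k'}'(t)}{\lambda_{k'}}\Big)^2-\tfrac12\sum_{k\in\mathcal K} a_k\Big(\tfrac{\tilde D_k'(t)}{\lambda_k}-1\Big)^2-\tfrac12\sum_{k\in\mathcal K}\lambda_k P_{k\omega}\Big(\tfrac{\tilde D_k'(t)}{\lambda_k}-1\Big)^2 .
\]
To upgrade this to a uniform strict decrease I would check that $\tilde{\bm\rho}\in\operatorname{int}<\tilde{\mS}>$: indeed $\tilde\rho_k>0$ for every retained class and the aggregate equals $\bm\rho\in\mC=\operatorname{int}<\mS>$, so every sufficiently small perturbation of $\tilde{\bm\rho}$ stays nonnegative and aggregates into $<\mS>$, hence remains feasible. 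Since $\tilde{\bm\rho}$ is interior while the maximizing schedule lies on $\partial<\tilde{\mS}>$ whenever $\bm Q(t)\ne 0$, there is a fixed $\epsilon>0$ (depending only on $\bm\rho$ and $\mS$) and a class $k^\ast$ with $\tilde D_{k^\ast}'(t)/\lambda_{k^\ast}-1>\epsilon$. Exactly as after (\ref{pairofinequalities}) in the proof of Theorem \ref{thrm:p-MW}, I would trace a routing path of positively-loaded classes from an externally fed class to $k^\ast$ and use the pigeonhole inequality together with the displayed bound to conclude $h'(t)\le-c\,\epsilon^2/|\mathcal K|^2$ whenever $\bm Q(t)\ne 0$. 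This forces $h$, and hence $\bm Q$, to vanish by a uniform time $t_0$, giving fluid stability, and positive recurrence then follows from the multiclass analog of Proposition \ref{propFMQN}.

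I expect the main obstacle to be the structural bookkeeping that legitimizes working entirely at the class level: verifying that the lifted set $<\tilde{\mS}>$ is the correct feasible region, that $\tilde{\bm\rho}$ is genuinely interior to it (which relies on $\tilde\rho_k>0$ and on $\bm\rho$ lying in the \emph{queue-level} interior $\mC$), and that a routing path to $k^\ast$ through positively-loaded classes always exists via the traffic equations and openness of the network. Once these facts are in place, the analytic core is a direct transcription of Lemma \ref{lem:technical} and of the proof of Theorem \ref{thrm:p-MW}, which is why the paper can assert the result by analogy.
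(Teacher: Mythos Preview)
Your approach is correct and is essentially the same as the paper's: both reinterpret each class as a queue in an auxiliary single-class switched network with lifted schedule set $\tilde{\mS}$, observe that the largest-class weighted MaxWeight policy coincides with the ordinary weighted MaxWeight policy (\ref{eq1wtmaxwt}) on this auxiliary network, and then invoke the stability result for weighted MaxWeight. The only difference is packaging: the paper simply cites Theorem \ref{thrm:p-MW} as a black box after the reinterpretation, whereas you unpack that theorem and re-run Lemma \ref{lem:technical} and the path argument at the class level; your extra checks (that $\tilde{\bm\rho}$ lies in the interior of $<\tilde{\mS}>$ and that zero-traffic classes can be discarded) are exactly the bookkeeping implicit in the paper's one-line appeal to Theorem \ref{thrm:p-MW}.
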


\begin{proof}
The result is an elementary consequence of Theorem  \ref{thrm:p-MW}. Reinterpret each class $k$, $k\in \mK$, as an individual queue within a single class switched queueing network. The routing of jobs is given, as before, by the matrix $(P_{k_1 k_2})_{k_1,k_2 \in \mK}$. The set of feasible schedules is given by 
$\tilde{\bm \sigma} \in \tilde{\mS}$, $\tilde{\bm \sigma} = (\tilde{\sigma_1},\ldots, \tilde{\sigma}_K)$,  where $\tilde{\sigma_k}$ is constrained by $\sigma_j \in \mS$, with $\mS$ is as in (\ref{eq:MultiMax}), and by
$\sigma_j = \sum_{k\in \mCl (j)}\tilde{\sigma}_k$.


The weighted MaxWeight policy (\ref{eq1wtmaxwt}) for this single class switched queueing network is the same as the policy (\ref{eq:MultiMax})--(\ref{eq:MultiMax2}) for the original multiclass switched queueing network, and therefore the two policies generate the same queueing network process, after the above reinterpretation of classes as queues.
The desired result therefore follows by applying Theorem \ref{thrm:p-MW}.
%
\end{proof}


%

\section{Comparison of MaxWeight with the ProportionalScheduler Policy.}\label{sec:Conc}

%


The \textit{ProportionalScheduler Policy} is defined in the same way as MaxWeight, but with the optimization problem (\ref{eq1}) replaced by
\begin{align}
\label{eq:PS}
&\text{maximize}\quad\sum_{j\in\mJ} Q_j \, {\log \sigma_j}\quad
\text{over} \quad  {\vecsigma} \in <{\mS}>\ ,
\end{align}
where $<{\mS}>$ is the convex hull of $\mS$.  When such a solution is a weighted average of schedules in $\mS$, a schedule is chosen at random according to these weights.
It was shown in \cite{BDW17} that the ProportionalScheduler is maximally stable for multiclass multi-hop switched queueing networks of Kelly type; it is consequently also maximally stable for single class multi-hop switched queueing networks. 

Why is maximum stability so much more robust for the ProportionalScheduler than for MaxWeight?  
This can be paraphrased as asking why no terms corresponding to $\rho_j$ in the weighted MaxWeight optimization (\ref{eq1wtmaxwt}) are needed in the ProportionalScheduler setting (\ref{eq:PS}) for general mean service times $m_j$ for fixed $\lambda_j$. 

To explain why the terms $\rho_j$ are not needed in (\ref{eq:PS}), we will consider why the ProportionalScheduler is maximally stable for arbitrary choices of $m_j$, given that it is maximally stable for $m_j \equiv 1$.  For this, we compare the optimization problem for $m_j \equiv 1$ on $<S>$, with the optimization problem for arbitrary $m_j$ on $<S^m>$, where
\begin{equation} 
\label{equationforS}
S^m := \{(m_1 \sigma_1, \ldots, m_{|\mJ|} \sigma_{|\mJ|}) \,: \, {\bm \sigma} \in \mS \} 
\quad \text{for } {\bm \sigma} = (\sigma_1,\ldots,\sigma_{|\mJ|})\,,
\end{equation}
i.e., $\mS$ is stretched by the factor $m_j$ in each coordinate $j$.   Since, for $\sigma_j^m  := m_j \sigma_j$,
\begin{equation*} 
\sum_{j\in \mJ} Q_j \log \sigma_j^m = \sum_{j\in \mJ} Q_j \log \sigma_j  + Q({\bf m}) \,,
\end{equation*}
where $Q({\bf m}) := \sum_{j\in \mJ} Q_j \log m_j$ does not depend on ${\bm \sigma}$, it is easy to check that $\hat {\bm \sigma}^m = (\hat \sigma_1^m,\dots, \hat \sigma_{\mJ}^m)$ maximizes $\sum_{j\in\mJ} Q_j \, {\log \sigma_j^m}$
over ${\vecsigma^m} \in <{\mS^m}>$ exactly when $\hat {\bm \sigma} = (\hat \sigma_1,\dots, \hat \sigma_{\mJ})$ maximizes $\sum_{j\in\mJ} Q_j \, {\log \sigma_j}$
over ${\vecsigma} \in <{\mS}>$.   

On the other hand, service at each queue $j$ is completed at rate $1/m_j,$ per unit work, for the queueing network process ${\bf Q}^m (t)$.  Since $\hat \sigma_j^m / \hat \sigma_j = m_j$, the queueing network processes  ${\bf Q} (t)$ and ${\bf Q}^m (t)$ complete service at the same rate.  Consequently, ${\bf Q} (t)$ and ${\bf Q}^m (t)$ will  have the same evolution, except for the presumably minor difference due to the different discretizations needed for choosing a schedule on $\mS^m$ as opposed to $\mS$, and hence both or neither process should be positive recurrent.
 So, maximal stability in the $m_j \equiv 1$ setting should imply maximal stability for arbitrary $m_j$.

In contrast, the positive recurrence of a subcritical switched queueing network process under MaxWeight depends on the choice of $m_j$.  For example, the switched queueing network in Theorem \ref{thrm:MW} satisfying (\ref{superqueueload}) and (\ref{thm1condition}) is transient. Applying (\ref{equationforS}) to scale the set $S$ and mean service times $m_j$, so that $\rho_j \equiv 1$, now produces a switched queueing network process that is positive recurrent under MaxWeight, by Theorem \ref{thrm:p-MW}.  This scaling also explains the role the terms $\rho_j$ play in the weighted MaxWeight optimization (\ref{eq1wtmaxwt}), since scaling both $\mS$ and the objective function by $\rho_j$ produces the same solutions for the optimization problem as without this scaling.

\strut \\
\noindent \textbf{Acknowledgments.} The research of the M. Bramson was partially supported by NSF grant DMS-1203201. The research of the D'Auria was partially supported by the Spanish Ministry of Economy and Competitiveness Grants [MTM2017-85618-P via FEDER funds]. Part of this research was done while he was a visiting professor at the NYUAD (Abu Dhabi, United Arab Emirates).

\bibliographystyle{amsplain} 
\bibliography{REFERENCES} 

 \appendix


\section{Convergence to Fluid Models.} \label{fluidmodelconvergence}

In Proposition \ref{FluidLimit}, we derive the fluid model equations, given in (\ref{eq:FN})--(\ref{argmax}) of Section \ref{sec:Fluid_Model}, from scaled limits of the queueing network equations of switched queueing networks, operating under the weighted MaxWeight policy. 
Our approach follows the standard fluid limit approach in \cite{Br08} and  \cite{Da95}, and is similar to that in  Appendix C of \cite{BDW17}. 

Let ${\procQ^c}=(\vecQ^c(t):\, t\in \bZ_+)$, $c\in \bN$,
be a sequence of weighted MaxWeight queueing network processes; we
employ
notation $A^c_j$, $D^c_j$, $E^c_j$, $Q^c_j$, $S^c_j$,  $\Phi^c_{jj'}$ and $\Pi^c_j$, for $j,j'\in \mJ$, analogous 
to that introduced in Section \ref{sec:model}.  The evolution of the processes is assumed to be identical, except for their initial states, which satisfy
$c=|\vecQ^c(0)| = \sum_{j\in\mJ} Q^c_j(0)$,
and the processes are coupled on the same probability space, with external arrival, job size, and routing processes each being the same for different $c$:  
\begin{equation*}
E^c_j(t) =  E_j(t),
\quad
S^c_j(t) =  S_j(t),
\quad \text{and}\quad 
\Phi^c_{jj'}(t) = \Phi_{jj'} (t),\qquad j,j'\in \mathcal J, \,\, c\in\bN\,.
\end{equation*}
For $j,j'\in \mathcal J$, we introduce the 
scaled processes
\begin{subequations}\label{eq:fluid.scaled.processes}
\begin{align}
\bar{A}^c_j(t) =& A_j^c( ct )/c,\;\;\bar{D}^c_j(t) = D_j^c(ct)/c,\;\;\bar{E}^c_j(t) = E_j^c(ct)/c,\;\; \bar{Q}^c_j(t) = Q^c_j(ct)/c\, ,
\\[.35em]
&\bar{S}^c_j(t) = S^c_j(ct)/c,\;\; \bar{\Phi}^c_{jj'}(t) = \Phi_{jj'}^c( ct )/c,\;\;\bar{\Pi}^c_{j}(t) = \Pi_{j}^c(ct)/c,
\end{align}	
\end{subequations}
for $t\in\{0,c^{-1},2c^{-1}, 3c^{-1},...\}$, and we interpolate linearly for other values of $t\in\mathbb R_+$.

As $c\rightarrow \infty$, these scaled processes converge to 
the fluid model equations \eqref{eq:FN}--\eqref{argmax} in the following sense:

\begin{proposition}\label{FluidLimit}
	There exists a set $G$ with $\mathbb P( G)=1$ such that, for all $\omega \in G$,
	any scaled subsequence  $(\bar{A}^{c_i}_j, \bar{D}^{c_i}_j, \bar{E}^{c_i}_j, \bar{Q}^{c_i}_j, \bar{S}^{c_i}_j, \bar{\Phi}^{c_i}_{jj'}, \bar{\Pi}^{c_i}_j: j,j'\in \mJ)$, $c_1<c_2<\ldots$, of switched networks under the weighted MaxWeight policy, contains a further subsequence that converges uniformly on compact time intervals. 
	 Moreover, any such limit $(\bar{A}_j, \bar{D}_j, \bar{E}_j, \bar{Q}_j, \bar{S}_j, \bar{\Phi}_{jj'}, \bar{\Pi}_j: j,j'\in \mJ)$ is a Lipschitz continuous process satisfying the weighted MaxWeight fluid model equations (\ref{eq:FN})--(\ref{argmax}). 
\end{proposition}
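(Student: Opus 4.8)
The plan is to follow the standard functional-law-of-large-numbers route to fluid limits, as in Dai \cite{Da95} and Bramson \cite{Br08}, adapting the argument in Appendix C of \cite{BDW17} to the weighted MaxWeight policy. First I would construct the almost sure set $G$ on which the driving primitives converge. Since $E^c_j$, $S^c_j$, and $\Phi^c_{jj'}$ are built from the same unscaled i.i.d.\ increment and renewal sequences for every $c$, the functional strong law of large numbers gives, off a single null set, the uniform-on-compacts convergences $\bar{E}^c_j(t)\to a_j t$, $\bar{S}^c_j(u)\to p_j u$, and $\bar{\Phi}^c_{jj'}(d)\to P_{jj'}d$. Taking $G$ to be the intersection of these full-probability events over the finitely many $j,j'\in\mJ$ yields $\mathbb{P}(G)=1$, and all subsequent analysis is carried out pathwise on $G$.

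Second, I would establish relative compactness of the scaled trajectories. Because each feasible schedule lies in the bounded set $\mS$, the cumulative scheduled service $\Pi^c_j$ increases by a uniformly bounded amount per time step, so the interpolated $\bar{\Pi}^c_j$ are Lipschitz with a constant independent of $c$; through the queueing network equations \eqref{eq:QN} this transfers to uniform Lipschitz bounds on $\bar{A}^c_j$, $\bar{D}^c_j$, and $\bar{Q}^c_j$. The Arzel\`a--Ascoli theorem then guarantees that every subsequence $c_1<c_2<\cdots$ admits a further subsequence converging uniformly on compact time intervals to a Lipschitz limit, which is therefore almost everywhere differentiable.

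Third, I would pass to the limit in the prelimit relations to recover \eqref{eq:FN1}--\eqref{eq:FN3}. The only delicate identity is \eqref{eq:QN3}, $D^c_j=S^c_j(\Pi^c_j)$, in which the limit must be taken through a composition; here I would invoke the random-time-change (composition) lemma, combining the uniform convergence of $\bar{S}^c_j$ to the linear map $u\mapsto p_j u$ with the convergence and equicontinuity of $\bar{\Pi}^c_j$ to conclude $\bar{D}_j=p_j\bar{\Pi}_j$. Equations \eqref{eq:FN1}--\eqref{eq:FN2} then follow by linearity from the convergence of the primitives, after noting that the truncation $\pi_j(\vecQ)\wedge Q_j$ in \eqref{eq:PI} does not bind in the limit wherever $\bar{Q}_j>0$.

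The hard part will be verifying the weighted MaxWeight optimality condition \eqref{argmax}, namely that at almost every $t$ the limiting service rate $\bar{\bm{\Pi}}'(t)$ attains $\argmax_{\vecsigma\in\mS}\sum_{j\in\mJ}\bar{Q}_j(t)\,\sigma_j/\rho_j$ (equivalently, lies in the convex hull of these maximizers). The idea is to fix a regular time $t$ with a short window $[t,t+\eta]$, which on the prelimit scale corresponds to the $O(c\eta)$ steps over $[ct,c(t+\eta)]$, during each of which the policy selects a schedule maximizing the instantaneous weighted objective $\sum_{j}Q^c_j(\tau)\,\sigma_j/\rho_j$. Since $\bar{Q}_j$ is Lipschitz, the scaled weights $Q^c_j(\tau)/c$ vary by at most the Lipschitz modulus across the window, so every schedule chosen there is within an error tending to $0$ (as $\eta\to0$) of maximizing the fixed objective with weights $\bar{Q}_j(t)$. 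Averaging over the window, dividing by $c$, and letting first $c\to\infty$ and then $\eta\to0$ shows that $\bar{\bm{\Pi}}'(t)$ is a convex combination of maximizers of the limiting objective over $\langle\mS\rangle$, hence attains the maximum value; an upper-semicontinuity argument for the $\argmax$ correspondence closes the step. This is the only place where the specific structure of the weighted MaxWeight rule enters, beyond generic fluid-limit bookkeeping, and is where I expect the main technical effort.
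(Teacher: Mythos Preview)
Your proposal is correct and follows essentially the same route as the paper: functional strong law for the primitives to define the full-probability set, uniform Lipschitz bounds plus Arzel\`a--Ascoli for relative compactness, and passage to the limit in \eqref{eq:QN} to obtain \eqref{eq:FN}. For the optimality condition \eqref{argmax}, your ``short window around a regular point'' argument and the paper's ``integral inequality then differentiate'' argument are two formulations of the same idea; the paper shows
\[
\int_s^t \frac{\bar{\bm Q}(u)}{\bm\rho}\cdot\bar{\bm\sigma}\,du \;\le\; \int_s^t \frac{\bar{\bm Q}(u)}{\bm\rho}\cdot\bar{\bm\Pi}(du)
\]
for every fixed $\bar{\bm\sigma}\in\mS$ and then differentiates, which is the Lebesgue-differentiation version of your $\eta\to0$ window limit.

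The one place you are slightly lighter than the paper is the treatment of the truncation $\pi_j(\vecQ)\wedge Q_j$. Your remark that it ``does not bind in the limit wherever $\bar Q_j>0$'' is the right intuition but is not by itself enough at times where some $\bar Q_j(t)=0$. The paper handles this uniformly by introducing the \emph{untruncated} cumulative schedule $\hat\Pi^c_j$ and proving the quantitative estimate
\[
\int_s^t \frac{\bar{\bm Q}^c(u)}{\bm\rho}\cdot\hat{\bm\Pi}^c(du) - \int_s^t \frac{\bar{\bm Q}^c(u)}{\bm\rho}\cdot\bar{\bm\Pi}^c(du) \;\longrightarrow\; 0,
\]
which works because truncation only occurs when $Q^c_j(i)<\sigma_{\max}$, so each such step contributes $O(1/c^2)$ to the weighted integral. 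You will need the analogous observation in your window argument to pass from the averaged \emph{chosen} schedules (which are the MaxWeight optimizers) to the averaged \emph{applied} schedules (which define $\bar\Pi^c$).
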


Note that, since almost sure convergence implies convergence in distribution, Proposition \ref{FluidLimit} implies tightness/relative compactness and characterizes the weak convergent limits of the sequence $\{ (\bar{A}^{c}_j, \bar{D}^{c}_j, \bar{E}^{c}_j, \bar{Q}^{c}_j, \bar{S}^{c}_j, \bar{\Phi}^{c}_{jj'}, \bar{\Pi}^{c}_j: j,j'\in \mJ) \}_{c\in \bN}$.  

The proof of Proposition \ref{FluidLimit} is  straightforward with the exception of showing the subsequential limits of
$ (\bar{\bm{\Pi}}^{c}, \bar{\bm{Q}}^{c})$
 satisfy (\ref{argmax}), which is rather tedious.

\smallskip


\begin{proof}[{Proof of Proposition \ref{FluidLimit}.}]

In order to demonstrate Proposition \ref{FluidLimit}, we recall that, for each $j,j'\in \mathcal J$,  
\begin{equation*}
(E_j(t)-E_j(t-1): t\in \mathbb N), \quad
(S_j(t)-S_j(t-1): t\in \mathbb N)\,,
\quad
\text{and}
\quad 
(\Phi_{jj'}(t)-\Phi_{jj'}(t-1): t\in \mathbb N) 	
\end{equation*}
 are collections of i.i.d. random variables with respective  
means $a_j$, $p_j^{-1}$ and $P_{jj'}$. Therefore, by
the (Functional) Strong Law of Large Numbers, on a set $G_1$ with $\mathbb{P}(G_1) = 1$, 
\begin{equation}\label{ESPhi}
\bar{E}^c_j(t)
\rightarrow 
a_j t\ ,
\quad
\bar{S}^c_j(t)
\rightarrow
p_j t \,,
\quad
\text{and}
\quad 
\bar{\Phi}^c_{jj'}(t)
\rightarrow 
P_{jj'} t	
\end{equation}
as $c\rightarrow\infty$, for $j,j'\in \mathcal J$, with convergence being uniform on compact time intervals.

By the Arzel\`a-Ascoli Theorem (cf. \cite{Bi99}), any sequence of equicontinuous functions $\bar{X}^{c_i} (t)$ on $[0,T]$, $T\in (0, \infty)$, with $\sup_{c_i} |\bar{X}^{c_i} (0)| < \infty$, has a converging subsequence with respect to the uniform norm.  
We will show that the sequences $\{  (\bar{A}^{c}_j, \bar{D}^{c}_j, \bar{E}^{c}_j, \bar{Q}^{c}_j, \bar{S}^{c}_j, \bar{\Phi}^{c}_{jj'}, \bar{\Pi}^{c}_j: j,j'\in \mJ)\}_{c\in\bN}$ satisfy both conditions for any $\omega \in G_1$. 

Since $|\bar{\vecQ}^{c_i}(0)|=1$ and the other functions are initially $0$, the supremum is clearly 
bounded.  In order to show equicontinuity, we first note that the sequences $\bar{E}^c_j$, $\bar{S}^c_j$, $\bar{\Phi}^c_{jj'}$ convergence uniformly on compact sets by the Functional Strong Law, and so these functions are equicontinuous.

 To show equicontinuity for the remaining functions, 
note that ${D}^{c}_j$ and  ${\Pi}^{c}_j$ both have bounded increments, and so their rescaled analogs are uniformly Lipschitz continuous;
this implies equicontinuity of the corresponding sequences. Since $\bar{Q}^{c}_j$ and $\bar{A}^c_j$ are the sum of a bounded number of such equicontinuous functions, they too are equicontinuous. Therefore, since the conditions of the Arzel\`a-Ascoli Theorem are met for $\omega \in G_1$, every subsequence $\{  (\bar{A}^{c_i}_j, \bar{D}^{c_i}_j, \bar{E}^{c_i}_j, \bar{Q}^{c_i}_j, \bar{S}^{c_i}_j, \bar{\Phi}^{c_i}_{jj'}, \bar{\Pi}^{c_i}_j: j,j'\in \mJ)\}_c$ has a further subsequence that converges uniformly on $[0,T]$.  Moreover, since these sequences of functions are uniformly Lipschitz, so are their limits.


We need to show that the fluid model equations in  (\ref{eq:FN})--(\ref{argmax}) are satisfied for all such limits. The equations in  (\ref{eq:FN}) follow directly from the queueing network equations in (\ref{eq:QN}) and from \eqref{ESPhi}.
So, it remains to show that \eqref{argmax} holds. 

For $\omega \in G_1$, consider a sequence 
$ (\bar{\bm{\Pi}}^{c_i}, \bar{\bm{Q}}^{c_i})$ that converges uniformly on compact sets to $ (\bar{\bm{\Pi}}, \bar{\bm{Q}})$.  
In order to show  $ (\bar{\bm{\Pi}}, \bar{\bm{Q}})$ satisfies \eqref{argmax}, we first introduce $\hat{\Pi}^c_j(t)$, where
$\hat{\Pi}^c_j(i/c) = \frac{1}{c} \sum_{\ell=1}^{i} \sigma_j(\vecQ^c(\ell))$, for 
$i \in \bZ_+$, and interpolate for other values of $t$.
 We note that, for $t>s$, $\bar{\Pi}_j^c(t) -  \bar{\Pi}_j^c(s) \le  \hat{\Pi}_j^c(t) -  \hat{\Pi}_j^c(s)$, with the difference only being nonzero due to underutilization  of work capacity at $j$, which can only occur when  $\bar{Q}^c_j (u) < \sigma_{\max}$ for some $u \in [s-1/c,t]$,
 where $\sigma_{\max}$ is the maximal value attainable by any coordinate of $\bm\sigma$ in $\mS$.  We will find it more convenient to work with $\hat{\bf\Pi}$ than with $\bar{\bf\Pi}$.

It follows from the definition of $\hat{\bm{\Pi}}^c$ that
$(\hat{\bm{\Pi}}^c(t) -  \hat{\bm{\Pi}}^c(s))/(t-s) \in < \bar{\mS} >
$.  Since $\bar{\bm{\Pi}}^c(t) -  \bar{\bm{\Pi}}^c(s) \le  \hat{\bm{\Pi}}^c(t) -  \hat{\bm{\Pi}}^c(s)$,
%
the above  limit $ \bar{\bf{\Pi}}$ satisfies
\[
\frac{ \bar{\bm{\Pi}}(t) -  \bar{{\bm{\Pi}}}(s)}{t-s} \in < \bar{\mS} > \quad \text{ for any } t>s.
\]
So, $\bar{\bm{\Pi}}'(t) \in < \bar{\mS} >$ wherever the derivative exists. 

We still need to show that $\bar{\bm{\Pi}}'(t)$ solves the MaxWeight optimization over $< \bar{\mS} >$.
We will employ the limit 

\begin{equation}\label{Qutipi---1}
\int_s^t 
	\frac{\bar{\bm{Q}^{c}}(u) }{\bm{\rho}}
	\cdot 
	\hat{\bm{\Pi}}^c(du)
- 
\int_s^t 
	\frac{\bar{\bm{Q}^{c}}(u) }{\bm{\rho}}
	\cdot 
	\bar{\bm{\Pi}}^c(du)	
\xrightarrow[c\rightarrow\infty ]{}  0 \quad \text{ for any } t>s \,,
\end{equation}
where
$\frac{\bm{Q}}{\bm{\rho}} := \Big( \frac{{Q_j}}{{\rho_j}} : j\in \mathcal J \Big)$.  
In order to show (\ref{Qutipi---1}), we observe that
\begin{align*}
	0 
	&
	\leq
	\int_s^t 
		\bar{Q}^c_j(u) \hat \Pi^c_j(du) 
	-
	\int_s^t 
		\bar{Q}^c_j(u) \bar \Pi^c_j(du) 	 
	\\[.1em]
	&
	\le
	\frac{1}{c^2}
	\sum_{i=1}^{\lceil ct \rceil}
		\Big[Q_j^c(i) + a_j + |\mathcal{J}|\sigma_{\max}\Big] \sigma_j( \bm Q^c(i)) 
	-
	\frac{1}{c^2}
	\sum_{i=1}^{\lceil ct \rceil}
		Q_j^c(i) \Big[ \sigma_j( \bm Q^c(i)) \wedge Q_j^c(i) \Big]
	\\[.2em]
	&
	\le
	\frac{1}{c^2}
	\sum_{i=1}^{\lceil ct \rceil}
	Q_j^c(i)
	\left[
	 	\sigma_j( \bm Q^c(i))  - Q_j^c(i)
	\right]
	\mathbb I
		[ Q_j^c(i) < \sigma_j(\bm Q^c(i)) ]
		+ (a_j \sigma_{\max} + |\mathcal{J}|\sigma_{\max}^2) 2t/c
	\\[.2em]
	&
	\leq 
	(\sigma_{\max}^2 + a_j \sigma_{\max} + |\mathcal{J}|\sigma_{\max}^2) 2t/c
	\xrightarrow[c\rightarrow \infty ]{} 0 \,,
\end{align*}
from which (\ref{Qutipi---1}) follows.  The quantity $a_j +|\mathcal{J}|\sigma_{\max}$ on the second line is due to the linear interpolation used to define $\bar Q_j^c (u)$, and the first term on the last line includes a factor $\sigma_{\max}$ that bounds $Q_j^c(i)$ on the indicator set of the previous line.

By the optimality of the weighted MaxWeight policy, for any policy $\bar{\vecsigma} \in \mS$ and $t >s$,
\begin{equation}\label{Qutipi-0} 
\int_s^t 
\frac{
\bar{\vecQ}^c(u)
}{\bm{\rho}}
\cdot \hat{\bm{\Pi}}^c(du) 
\geq 
\frac{1}{c^2}
\sum_{i=\lfloor cs \rfloor +1}^{\lfloor ct \rfloor}  
\frac{\vecQ^c(i)}{\bm{\rho}}\cdot {\vecsigma} (\vecQ^c(i)) 
- \frac{\beta_{s,t}}{c}
\ge
\frac{1}{c^2}
\sum_{i=\lfloor cs \rfloor +1}^{\lfloor ct \rfloor}  
\frac{\vecQ^c(i)}{\bm{\rho}}\cdot \bar{\vecsigma} 
- \frac{\beta_{s,t}}{c}\,;
\end{equation}
the term $\beta_{s,t} :=  |\mathcal{J}|\sigma_{\max}^2 (t-s)/\rho_{\min}$ is due to linear interpolation, where $\rho_{\min} := \min_{j\in \mathcal J} \rho_j$.
We claim that
\begin{equation}\label{pictopi}
\int_s^t 
	\frac{\bar{\bm{Q}}^c(u) }{\bm{\rho}}
	\cdot 
	\hat{\bm{\Pi}}^c(du)
\xrightarrow[c\rightarrow\infty ]{}  
\int_s^t 
\frac{\bar{\bm{Q}}(u) }{\bm{\rho}}
\cdot 
\bar{\bm{\Pi}}(du)
\end{equation}
and 
\begin{equation}\label{sequenceconvergence}
\frac{1}{c^2}
\sum_{i=\lfloor cs \rfloor +1}^{\lfloor ct \rfloor}  
\frac{\vecQ^c(i)}{\bm{\rho}}\cdot \bar{\vecsigma}
\xrightarrow[c\rightarrow\infty ]{}  
\int_s^t 
\frac{\bar{\bm{Q}}(u) }{\bm{\rho}}
\cdot 
\bar{\bm{\sigma}} \, du\,.
\end{equation}
The limit  (\ref{sequenceconvergence}) follows immediately from the uniform convergence of $\bar{\bm{Q}}^c$ to $\bar{\bm{Q}}$ on compact sets.  To show (\ref{pictopi}), we note that, by the uniform convergence on compact sets of 
$\bar{\bm{Q}}^c$ to $\bar{\bm{Q}}$ and  $\bar{\bm{\Pi}}^c_j$ to 
$\bar{\bm{\Pi}}_j$,
\begin{equation}\label{before93}
\left| 
	\int_s^t 
		\Big[ 
		\frac{\bar{\bm{Q}}^c(u) }{\bm{\rho}}
		- 
		\frac{\bar{\bm{Q}}(u)}{\bm{\rho}} 
		\Big] \cdot
		\bar{\bm{\Pi}}^c(du) 
\right| 
\leq 
\frac{
|\mathcal J|\sigma_{\max} 
}{
\rho_{\min}
}
|| 
\bar{\bm{Q}}^c - \bar{\bm{Q}} ||_\infty  (t-s)\xrightarrow[c\rightarrow \infty]{} 0
\end{equation}
and
\begin{equation}\label{Qutipi-1}
\int_s^t 
	\frac{\bar{\bm{Q}}(u) }{\bm{\rho}}
	\cdot 
	\bar{\bm{\Pi}}^c(du)
\xrightarrow[c\rightarrow\infty ]{}  
\int_s^t 
\frac{\bar{\bm{Q}}(u) }{\bm{\rho}}
\cdot 
\bar{\bm{\Pi}}(du)\,,
\end{equation}
where
$|| \cdot ||_\infty$ denotes the uniform norm over all coordinates.
The limit (\ref{pictopi}) is an immediate consequence of (\ref{Qutipi---1}), (\ref{before93}), and (\ref{Qutipi-1}).

It follows immediately from (\ref{Qutipi-0})--(\ref{sequenceconvergence}) that, for any policy $\bar{\vecsigma} \in \mS$ and $t >s$,
\begin{equation} \label{optimality}
\int_s^t 
\frac{\bar{\bm{Q}}(u)}{\bm{\rho}}
\cdot \bar{\bm{\sigma}} du
\leq 
\int_s^t 
\frac{\bar{\bm{Q}}(u)}{\bm{\rho}} 
\cdot \bar{\bm{\Pi}}(du) \,.
\end{equation}
Differentiating both sides of (\ref{optimality}) with respect to $t$ implies that
$
\frac{\bar{\bm{Q}}(t)}{\bm{\rho}} 
\cdot \bar{\bm{\sigma}} 
\leq 
\frac{\bar{\bm{Q}}(t)}{\bm{\rho}} 
\cdot \bar{\bm{\Pi}}'(t) ,
$
wherever $\bar{\bm{\Pi}}'(t)$ exists.
Since $\bar{\bm{\Pi}}'(t) \in <\mathcal S>$, it follows that 
$
\bar{\bm{\Pi}}'(t) \in  \argmax_{{\vecsigma} \in {\mS}}\; \sum_{j\in\mJ} \bar{Q}_j(t) \sigma_j / \rho_j$, as desired.
\end{proof}

%
%

\smallskip

\section{Fluid Stability Implies Positive Recurrence.}\label{Sec:PosRec}
Here, we demonstrate Proposition \ref{propFMQN} of Section \ref{sec:Fluid_Model}.  As mentioned in that subsection, the use of fluid stability is a standard approach for showing positive recurrence of queueing networks.

\begin{proof}[{Proof of Proposition  \ref{propFMQN}.}]

One can check that, for each $t\geq 0$, the sequence of queue sizes 
$\{ |\bar{\vecQ}^{c}(t)| \}_{c \in \bN}$ 
of the
scaled proportional switched networks in Proposition \ref{FluidLimit}
is uniformly integrable. This follows quickly from the inequality
\begin{equation}
\label{equnifint}
|\bar{\vecQ}^{c}(t)| =\sum_{j\in\mJ} \bar{Q}_j^{c}(t)\leq \sum_{j\in\mJ} \frac{Q^{c}_j(0)}{c} + \sum_{j\in\mJ} \frac{E^{c}_j(t)}{c}\,,
\end{equation}
since $E^{c}_j(t)$ is a sum of i.i.d. random variables with finite mean (see, e.g., \cite[Lemma 4.13, (4.81)]{Br08}). 


On the other hand, by Proposition \ref{FluidLimit}, on a set $G$ with $P(G)=1$,  every subsequence of
$\bar{\vecQ}^{c}(t)$ has a further subsequence that converges uniformly on compact time
 intervals to a fluid model solution $\vecQ(t)$ of (\ref{eq:FN})--(\ref{argmax}), with $|\vecQ(0)|=1$. Since the fluid model is assumed to be stable, all fluid model solutions with $|\vecQ(0)|=1$  satisfy $|\vecQ(t)| = 0$ for $t\ge t_0$, with $t_0$ not depending on the particular
fluid model solution. 
Reapplying this reasoning along any subsequence of $|\bar{\vecQ}^{c}(t_0)|$ shows that, in fact, $|\bar{\vecQ}^{c}(t_0)|$ converges to $0$ along the entire sequence.

By the above uniform integrability and almost sure convergence results on $|\bar{\vecQ}^{c}(t_0)|$, it follows that
\begin{equation*}
\lim_{c\rightarrow\infty} \bE |\bar{\vecQ}^{c}(t_0)|  =0\,,
\end{equation*}
which implies 
$\bE |\vecQ^{c}(ct_0)|   \le c/2$ for large enough $c$.
Since the set of values in the state  space satisfying $\{ \bm Q_0 : | \bm Q_0 | \leq c \}$ is finite for all $c$, one can apply a generalized version of Foster's criterion to conclude that the process $\bm Q(t)$ is positive recurrent.  (See
 Robert \cite[Theorem 8.6]{robert2013stochastic}; or, in the more general continuous time setting, 
  \cite[Proposition 4.5]{Br08}.)
This concludes the proof of Proposition \ref{propFMQN}.
\end{proof}

\section{Positive Recurrence in Switched Networks with Pure Branching.}

Here, we prove Theorem \ref{purebranching}, which demonstrates positive recurrence for switched networks with pure branching.  The proof employs fluid models, as well as a Lyapunov function $g(t)$ that is similar to the Lyapunov function $h(t)$ of Theorems \ref{thrm:p-MW} and \ref{tandemtheorem}.  Interestingly, this choice of Lyapunov function does not work for general network topologies, unlike the Lyapunov function $h(t)$ (although it is equivalent to $h(t)$ in the restricted setting of tandem networks in Theorem \ref{tandemtheorem}).  Conversely, the Lyapunov function $h(t)$ cannot be employed for Theorem \ref{purebranching}.

 In the proof of the theorem, $b(j) \in \mathcal J$ denotes the queue preceding the queue $j$, provided such a queue exists, and $\mathcal J^b \subset \mathcal J$ denotes those queues $j$ for which $b(j)$ exists.


\smallskip

\begin{proof}[{Proof of Theorem \ref{purebranching}.}]
We define the following Lyapunov function
\begin{equation*}
g(t): 
= 
\max_{{\vecsigma} \in {\mS}}\; \sum_{j\in \mathcal J} Q_{j}(t) 
\left( 
{\sigma_{j}}
- \rho_{j}
\right)
=
\sum_{j\in \mathcal J}
Q_{j}(t) 
\big({D'_{j}(t)}- \lambda_{j} \big) \, ,
\end{equation*}
where $\rho_{j}$ is the traffic intensity at queue $j$, $\lambda_j$ is the total arrival rate there, and $Q_j(t)$ and $D_j(t)$ satisfy the fluid model equations (\ref{eq:FN})--(\ref{argmax}). 
 Because $\bm{\rho} \in\mathcal C$ is assumed,  $g(t)> 0$ when $\vecQ(t) \neq 0$.

As in Theorems \ref{thrm:p-MW} and \ref{tandemtheorem}, one can show that 
\begin{equation} \label{boundong}
g'(t)
\leq 
\sum_{j\in \mathcal J} 
Q_j'(t) 
\Big( D'_j(t) - \lambda_j \Big)\, .
\end{equation}
The right-hand side of (\ref{boundong}) can be bounded as follows:
\begin{align*}
\sum_{j\in \mathcal J} 
Q_j'(t) 
\Big( D'_j(t) - \lambda_j \Big)\
& 
=
\sum_{j\in \mathcal J} 
\Big( A'_j(t)- D'_j(t) \Big)
\Big( D'_j(t) -A'_j(t) + A'_j(t) - \lambda_j \Big)
\\[.2em]
=
&
-\frac{1}{2} \sum_{j\in \mathcal J}
\Big( A'_j(t) - D'_j(t) \Big)^2
- 
\frac{1}{2}
\sum_{j\in \mathcal J}
\Big(D'_j(t) -   \lambda_j \Big)^2
+
\frac{1}{2}
\sum_{j\in \mathcal J}
\Big( A'_j(t) -  \lambda_j \Big)^2
\\[.1em]
=
&
-
\frac{1}{2}
\sum_{j\in \mathcal J}
\Big( A'_j(t) - D'_j(t) \Big)^2
-
\frac{1}{2}
\sum_{j\in\mathcal J} 
\Big( 
D_j'(t) - \lambda_j
\Big)^2
+
\frac{1}{2}
\sum_{j \in \mathcal J^b} 
P_{b(j)j}^2\Big( D'_{b(j)}(t) -  \lambda_{b(j)} \Big)^2
\\[.1em]
=
&
-
\frac{1}{2}
\sum_{j\in \mathcal J}
\Big( A'_j(t) - D'_j(t) \Big)^2
-
\frac{1}{2}
\sum_{j\in\mathcal J} 
\bigg[
1-\!\!\!  \sum_{j' : b(j')=j} \!\!\!     P_{j j'}^2 
\bigg]
\Big( 
D_j'(t) - \lambda_j
\Big)^2
\\[.1em]
\leq 
&
-\frac{1}{2} \sum_{j \in \mathcal J} 
	\Big(
		A_j'(t) - D_j'(t)
	\Big)^2.
\end{align*}
In the preceding display, the first equality follows from the definition of $Q_j$ and the third equality employs
\[
A'_j(t) = D'_{b(j)} (t) P_{b(j) j } ,
\quad
\lambda_j = \lambda_{b(j)} P_{b(j) j } \,\,; 
\]
 note that, if no queue precedes $j$,  then $j$ instead has exogenous arrivals, and so $A_j'(t)=\lambda_j$.
Combining the above inequalities yields the bound
\begin{equation}
g'(t)
\leq 
-\frac{1}{2} \sum_{j \in \mathcal J} 
	\Big(
		A_j'(t) - D_j'(t)
	\Big)^2.
\label{g:bound}	
\end{equation}

The remainder of the argument proceeds similarly to that for Theorem \ref{tandemtheorem}. 
We first note that, if $|A'_j(t) - D'_j(t)| \leq \epsilon$ for all $j\in\mathcal J$, 
then 
$ |D'_j(t) - \lambda_j| \leq |\mathcal J |\epsilon $ for all $j$, since there is a sequence of queues at most $|\mathcal J|$ long connecting any queue with the external arrival stream.
On the other hand, $({D}'_j(t) : j \in \mathcal J)$ belongs to the boundary of the set $<\mathcal S>$, whereas $\bm{\rho}$ does not. So  $| D'_j(t) - \lambda_j| > | \mathcal J | \epsilon $ for some $\epsilon > 0$. Therefore, for some $j\in \mathcal J$, 
\begin{equation} \label{ADinequality}
| A_j'(t) - D'_j(t) | > \epsilon.
\end{equation}
Together with \eqref{g:bound}, (\ref{ADinequality}) implies that, for all times $t$ with $\bm Q(t) \neq 0$,
\[
g'(t) \leq -\frac{1}{2} \epsilon^2.
\]
It immediately follows that, for $t \ge t_0 :=2|\mathcal J| g(0)/\epsilon^2$, $g(t)=0$.  Hence $\bm Q(t)=0$, which implies the corresponding fluid model is stable.  By Proposition \ref{propFMQN}, the switched queueing network is therefore positive recurrent.
\end{proof}

\section{Transience of a Subcritical LQFS Multiclass Queueing Network.}  \label{appendixD}

A multiclass queueing network is a network that permits more than one class of job at a station; this terminology was popularized by Harrison and is now standard in the literature (see, e.g., \cite{Br08}).  In the present context, we equate ``class" with ``queue" and ``station" with ``component", and employ terminology associated with queue and component that has been used throughout this paper.

Here, we briefly discuss the instability of multiclass queueing networks with the Longest-Queue-First-Served (LQFS) policy.  As the name suggests, over each unit of time, the policy selects the longest queue at each station and devotes the maximal allowed amount of service to these queues.  (In the case of a tie among classes, any of these classes may be chosen.)  

The LQFS policy can be rephrased in terms of the MaxWeight optimization problem,
\begin{equation}\label{unweightedmaxweight}
\text{maximize} 
\quad 
\sum_{j=1}^J
Q_j \sigma_j
	\quad \text{over } \vecsigma \in \mS\,,
	\end{equation}
where the set of feasible schedules $ \vecsigma \in \mS$ is given by 	
$\sum_{j=1}^J \sigma_j \leq 1$,
with $\sigma_j$ being the amount of work that can be applied at class $j$.  The LQFS policy can therefore be investigated within the framework of the current paper;
for example, Theorems  \ref{thrm:p-MW} and \ref{tandemtheorem} can be applied.

We present here a subcritical, but transient, LQFS queueing network whose behavior mimics that of the example 
 in Figure \ref{Fig2} and Theorem \ref{thrm:MW} of a subcritical transient switched network.  For our example, we assume batch service, with $\nu_j \in \mathbb{Z}_+$ class $j$ jobs capable of being served in unit time.  Rescaling the amount of work required for each job by $\nu_j$, so that each job now requires one unit of service, the LQFS policy solves the optimization problem
\begin{equation}\label{lqoptim}
\text{maximize} 
\quad 
\sum_{j=1}^J
Q_j \frac{\sigma_j}{\nu_j}
	\quad \text{over } \vecsigma \in \mS'\,,
	\end{equation}
where the set of feasible schedules $ \vecsigma \in \mS'$ is now given by 	
$\sum_{j=1}^J \sigma_j  /\nu_j \leq 1$.  
The policy in (\ref{lqoptim}) is a weighted MaxWeight policy, although with different weighting than in (\ref{eq1wtmaxwt}).

Our example adopts the framework of Figure \ref{Fig2}, with components $\mA$ and $\mB$, queues $\mA_j$ and $\mB_j$, $j=0,\ldots,J$, and parameters $a$, $\nu$, and $J$ that satisfy all of the assumptions in the paragraph containing (\ref{displayforrho})--(\ref{superqueueload}) and in its preceding paragraph.
In particular, (\ref{lqoptim}) in this framework becomes
\begin{equation} \label{specificlqoptim}
\text{maximize} 
\quad 
Q_{\mathcal A_0} \sigma_{\mathcal A_0} 
+ \frac{1}{\nu} \sum_{j=1}^J Q_{\mathcal A_j} 
\sigma_{\mathcal A_j}  \quad
\text{and} \quad
Q_{\mathcal B_0}\sigma_{\mathcal B_0} 
+
\frac{1}{\nu} \sum_{j=1}^J Q_{\mathcal B_j} 
\sigma_{\mathcal B_j}  \,,
\end{equation}
where the set $\mS'$ of feasible schedules now satisfies (\ref{Sfortheorem1}).
The condition (\ref{superqueueload}) implies that the traffic intensity $\vecrho \in \mC$, and also that the corresponding LQFS queueing network is subcritical in the standard sense of multiclass queueing networks (i.e., the traffic intensity at each station is $<1$).

Theorem \ref{thrm:LQF} is the analog of Theorem \ref{thrm:MW}.

\begin{theorem}
\label{thrm:LQF}
 Consider the switched network represented by Figure \ref{Fig2} that solves the optimization problem in (\ref{specificlqoptim}) and (\ref{Sfortheorem1}).   Assume that (\ref{superqueueload}) is satisfied and that 
\begin{align}
\label{thm6condition}
1 < \nu < J  ,
\qquad \text{and} \qquad
\frac{J}{2J - 1} < a < 1-\frac{(J+1)(J+\nu)}{\nu J^2 + J + \nu}  \,.
\end{align}
Then $\vecrho\in \mC$ and the associated queueing network process {$\procQ$} is transient. 
It follows immediately that the corresponding multiclass LQFS queueing network is both subcritical and transient.
\end{theorem}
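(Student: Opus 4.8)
The plan is to reproduce the argument of Theorem~\ref{thrm:MW} essentially line for line, tracking how the weighting $1/\nu$ on the internal queues in~\eqref{specificlqoptim} shifts the service-threshold comparisons. The sole structural difference between the policy~\eqref{specificlqoptim} and the unweighted MaxWeight policy of Theorem~\ref{thrm:MW} is that service at component $\mA$ is devoted to $\mA_0$ precisely when $Q_{\mA_0} > \max_{j\ge1} Q_{\mA_j}$, rather than when $Q_{\mA_0} > \nu\max_{j\ge1} Q_{\mA_j}$; symmetrically, the ``balance'' maintained at component $\mB$ under the policy is $Q_{\mB_j}\approx Q_{\mB_0}$ rather than $\nu Q_{\mB_j}\approx Q_{\mB_0}$. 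Thus every occurrence of $\nu$ that is tied to the inter-queue comparison is replaced by $1$, while the physical batch rate $\nu$ of the quick-service queues is retained. The final sentence of the theorem is then immediate: by the rescaling~\eqref{lqoptim} set up in this appendix, the switched network governed by~\eqref{specificlqoptim} and the multiclass LQFS network are the \emph{same} Markov process under the queue/class--station identification, so transience of the former is transience of the latter, and multiclass subcriticality (each station load below $1$) is exactly~\eqref{superqueueload}.

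First I would note that $\vecrho\in\mC$ follows from~\eqref{superqueueload} as in the discussion preceding the theorem. The transience proof then runs through the cyclic renewal scheme of Subsection~\ref{sec:Thrm1}: on the high-probability event $G_M$ one produces a stopping time $V\le T$ at which the initial configuration recurs with $M$ replaced by $M'=\gamma M$ and the roles of $\mA$ and $\mB$ exchanged, after which iteration via the Strong Markov property forces $Q^{\Sigma}_{\mA}(t)+Q^{\Sigma}_{\mB}(t)\to\infty$ with positive probability. Lemma~\ref{HighProb} and the definition of $G_M$ concern only external arrivals and carry over verbatim. I would replace the balance conditions~\eqref{Gap} and~\eqref{Init3} by $|Q_{\mB_0}(0)-Q_{\mB_j}(0)|\le\epsilon M$ and $|Q_{\mA_0}(V)-Q_{\mA_j}(V)|\le\epsilon M'$, and redefine $V$ in~\eqref{eq:tildeT} as $V=\min\{t\ge U:\,Q_{\mA_0}(t)\le\max_{j}Q_{\mA_j}(t)\}\wedge T$, leaving $U$ unchanged.

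The substance is the re-derivation of the six lemmas of Subsection~\ref{sec:PropProof}. Lemma~\ref{LemmaEmpty} (that $U<T$) and Lemma~\ref{Lemon9} (near-balance of the $\mA_j$'s) are insensitive to the weighting and transfer directly, as does Lemma~\ref{LemmaStillEmpty} after the $\mB$-balance relabeling. In the analog of Lemma~\ref{A0 arrivals}, identity~\eqref{eq:Service} still holds, but the MaxWeight inequality $\max_j Q_{\mB_j}(S)\le\nu^{-1}(Q_{\mB_0}(t)+1)$ now loses its factor $\nu^{-1}$; combined with the new balance $Q_{\mB_j}(0)\approx Q_{\mB_0}(0)$, collecting the $A_{\mA_0}$ terms gives the cleaner bound
\[
A_{\mA_0}(t)\ \ge\ \frac{\nu J(1-a)}{\nu+J}\,t-\epsilon\kappa_1 M\qquad(t\le U),
\]
i.e.\ $\nu^2+J$ in~\eqref{lemma3eq} becomes $\nu+J$. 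In the analog of Lemma~\ref{LemmaStopService} I would compare this growth rate of $Q_{\mA_0}$ against the external arrival rate $a/J$ of each $Q_{\mA_j}$ (not $a\nu/J$ of $\nu Q_{\mA_j}$), so that~\eqref{equationfor_a} becomes
\[
\frac{\nu J(1-a)}{\nu+J}-1\ >\ \frac{a}{J},
\]
which rearranges to the upper bound $a<1-(J+1)(J+\nu)/(\nu J^2+J+\nu)$ of~\eqref{thm6condition}. In the analog of Lemma~\ref{Lemma8} the initial mass splits as $\sum_{j\ge1}Q_{\mB_j}(0)\approx JM/(J+1)$, whence tracking~\eqref{eq:QA5} yields $V\approx\frac{JM}{J+1}\cdot\frac{1}{1-a+a/J}$ and $Q^{\Sigma}_{\mA}(V)\ge\frac{aM}{1-a+a/J}-\epsilon\kappa_4 M$. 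Requiring the LQFS analog $\gamma<a/(1-a+a/J)$ of~\eqref{gamma.condition} to exceed $1$ is exactly the lower bound $a>J/(2J-1)$; this closes the induction, and~\eqref{Init4} follows as in the proof of Proposition~\ref{MainProp}.

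The main obstacle I anticipate is the analog of Lemma~\ref{Lemma8}, where all five flow terms in~\eqref{eq:QA5} must be controlled simultaneously on $[U,V]$ to pin $V$ within an $O(\epsilon M)$ window and to extract the matching lower bound on $Q^{\Sigma}_{\mA}(V)$. The new balance constant $J/(J+1)$ propagates through every term, and the care required is in verifying that the lower-order error constants $\kappa_1,\dots,\kappa_4$ can again be chosen uniformly in $M$ so that~\eqref{thm6condition} alone guarantees both $\gamma>1$ strictly and $V<T$. Once the substitution $\nu\mapsto1$ in the threshold-tied quantities is carried out consistently, the remaining estimates (the $\mB$-balance lemma and the elementary arrival bounds) are routine.
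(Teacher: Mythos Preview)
Your proposal is correct and follows essentially the same approach as the paper: the paper likewise reduces Theorem~\ref{thrm:LQF} to a primed version of Proposition~\ref{MainProp} with the modified balance conditions $|Q_{\mB_0}(0)-Q_{\mB_j}(0)|\le\epsilon M$, the redefined stopping time $V$, the revised arrival bound $A_{\mA_0}(t)\ge\nu(1-a)\bigl(1-\tfrac{\nu}{\nu+J}\bigr)t-\epsilon\kappa_1 M$, and the Lemma~\ref{Lemma8} analog with $V\approx\tfrac{JM}{J+1}\cdot\tfrac{1}{1-a+a/J}$ and $Q^{\Sigma}_{\mA}(V)\ge\tfrac{aM}{1-a+a/J}-\epsilon\kappa_4 M$, exactly as you derive. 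The only minor caveat is that the paper flags the analogs of Lemma~\ref{LemmaStillEmpty} and the concluding step of Proposition~\ref{MainProp} as differing ``slightly'' rather than transferring verbatim, but your $\mB$-balance relabeling already captures this.
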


It is easy to choose $a$, $\nu$, and $J$ so that both (\ref{superqueueload})
and~\eqref{thm1condition} are satisfied.
For instance, one can choose
$a=7/12$, $\nu=6$, and $J=30$, as below Theorem \ref{thrm:MW}. 

We note the following comparison of the lower bound for $a$ given by (\ref{thm6condition}) with that of (\ref{thm1condition}) of Theorem \ref{thrm:MW}, for $\nu = 6$ and $J=30$.
The lower bound in (\ref{thm6condition}) is $30/59$, whereas the corresponding lower bound in (\ref{thm1condition}) is $30/54$.  This lower value of the bound in  (\ref{thm6condition}), as compared with that in (\ref{thm1condition}),  is not surprising since, according to Theorem \ref{thrm:p-MW} and (\ref{eq1wtmaxwt}), the ``correct" denominators $\rho_j$ for $\sigma_{\mA_j}$ and $\sigma_{\mB_j}$, for maximal stability, are  $\rho_0 = 1$ and $\rho_j = 1/30$, for $j=1,\ldots, 30$ (after factoring out $a$), whereas the corresponding denominators in the MaxWeight setting (\ref{eq1}) are both $1$, and the corresponding denominators in (\ref{specificlqoptim}) are $1$ and $6$.  Hence, the ratios of the denominators for the terms with $j=0$ and $j\neq 0$ in the three cases are $30$, $1$, and $1/6$, i.e., the choice of denominators in (\ref{specificlqoptim}) is further than in (\ref{eq1}) from the correct choice, and so one should expect the lowest value of $a$ at which transience of the corresponding process occurs to be less than in (\ref{eq1}).

The proof of Theorem \ref{thrm:LQF} is essentially identical to that of Theorem \ref{thrm:MW}.  One employs Lemma  \ref{HighProb} and Proposition \ref{prop1'} (below), which replaces Proposition \ref{MainProp}.  The statements in Propositions \ref{MainProp} and  \ref{prop1'} are the same except for minor changes.
In Proposition \ref{prop1'}, in lieu of (\ref{InitalCondition}), we will employ (\ref{InitalCondition}$'$),
\begin{subequations}
	\begin{align}
	Q^{\Sigma}_{\mB }(0) & = M \, ,\tag{\ref{InitialB}$'$}
\\[.2em]
	Q^{\Sigma}_{\mA }(0) & \leq \epsilon M / \nu \, ,\tag{\ref{InitialA}$'$}
\\[.15em]
 \left| Q_{\mB_0}(0) -  Q_{\mB_j}(0) \right|  &\leq \epsilon M \, , \text{ for } j=1,...,J \ . \tag{\ref{Gap}$'$}
	\end{align}
\end{subequations}
(The balancing condition (\ref{Gap}$'$) replaces that in (\ref{Gap}).)
In Proposition \ref{prop1'}, in lieu of (\ref{gamma.condition}), we will employ (\ref{gamma.condition}$'$),
\begin{equation}\tag{\ref{gamma.condition}$'$}
1 < \gamma < \frac{a}{1-a+a /J} \,,
\end{equation}
and will again set $M'=\gamma M$. The stopping time $U$ is again defined as in (\ref{eq:tildeT}), but $V$ is modified:
\begin{equation}
U
=
\min\big\{ t \geq 0 : Q_{\mB}^{\Sigma}(t)\leq \nu^2 \big\} \wedge T \,, \quad
V
=
\min
\big\{
t \geq U : Q_{\mA_0}(t) \le  \! \max_{j=1,...,J} Q_{\mA_j}(t)  
\big\} \wedge T \ ,
\tag{\ref{eq:tildeT}$'$}
\end{equation}
where $T$ is given by (\ref{defofT}).

\noindent \textbf{Proposition \ref{MainProp}$'$}. 
\label{prop1'}
	\emph{Assume (\ref{superqueueload}) and (\ref{thm6condition}), and assume (\ref{InitalCondition}$\, '$) with $M$ satisfying \eqref{lw.bound.M}.  Then, on the event $G_M$, }
	\begin{subequations}
		\begin{gather}
		Q^{\Sigma}_\mA  (V) \geq M',\tag{\ref{Init1}$'$}
\\[.2em]
		Q^{\Sigma}_\mB  (V) \leq \epsilon M' / \nu , \tag{\ref{Init2}$'$} 
\\[.12em]
	    \left| Q_{\mA_0}(V) - Q_{\mA_j} (V) \right| \leq \epsilon M' ,\tag{\ref{Init3}$'$}
		\end{gather}
\end{subequations}
\emph{for $j=1,...,J$.  Moreover, for all times $0\leq t\leq V$,}
	\begin{equation}\tag{\ref{Init4}$'$}
	Q_\mA^\Sigma(t) + Q_\mB ^\Sigma(t) \geq \frac{1}{2} \frac{a}{a +\nu}M.
	\end{equation}

Given  Lemma \ref{HighProb} and  Proposition \ref{prop1'},
the proof of Theorem \ref{thrm:LQF} is identical to that of Theorem \ref{thrm:MW}.  

The demonstration of Proposition \ref{prop1'} is very similar to that of Proposition \ref{MainProp}, and employs a sequence of lemmas that are nearly identical to Lemma \ref{LemmaEmpty} -- Lemma \ref{Lemma8}, which we label here as Lemma \ref{LemmaEmpty}$'$ -- Lemma \ref{Lemma8}$'$.  The statement of each of the new lemmas is identical to that of the corresponding lemma employed for Proposition \ref{MainProp}, with the exception of Lemmas \ref{A0 arrivals}$'$, \ref{LemmaStopService}$'$,  and \ref{Lemma8}$'$, where the conditions (\ref{lemma3eq}), (\ref{displayforlemm4}), and  (\ref{eq:Lem8A})--(\ref{eq:Lem8B}) are replaced by 
\begin{equation} \tag{\ref{lemma3eq}$'$}
A_{\mA_0} (t)
\geq\nu (1 - a ) \left(1 - 
  \frac{\nu}{\nu+J}  \right)t
- \epsilon  \kappa_1 M  \, ,
\end{equation}
\begin{equation} 
\tag{\ref{displayforlemm4}$'$}
	Q_{\mA_0} (t)
	>
 \max_{j=1,...,J} Q_{\mA_j}(t)\, ,
	\end{equation}
and 
\begin{align}
&
\left|
	V
	-
	\frac{JM}{J+1} \cdot
	\frac{1}{1 - a + {a }/{J} }
\right|
\leq
\epsilon \kappa_3   M \, ,\tag{\ref{eq:Lem8A}$'$}
\\[.5em]
&
Q^{\Sigma}_{\mA}(V)
\geq
 \frac{aM}{1 -a + {a}/{  J}}
-
\epsilon  \kappa_4  M\,. \tag{\ref{eq:Lem8B}$'$}
\end{align}
The proof of Lemma \ref{LemmaStillEmpty}$'$ and the completion of the proof of Proposition \ref{prop1'} differ slightly, although their statements are the same as for the corresponding results in Section \ref{sec:proofs}.  The statements and proofs of the other lemmas are identical to those of their corresponding lemmas in Section \ref{sec:proofs}.  The statements and proofs of all of the lemmas are given in detail in \cite{BDW19}.

\includepdf[pages=1-last]{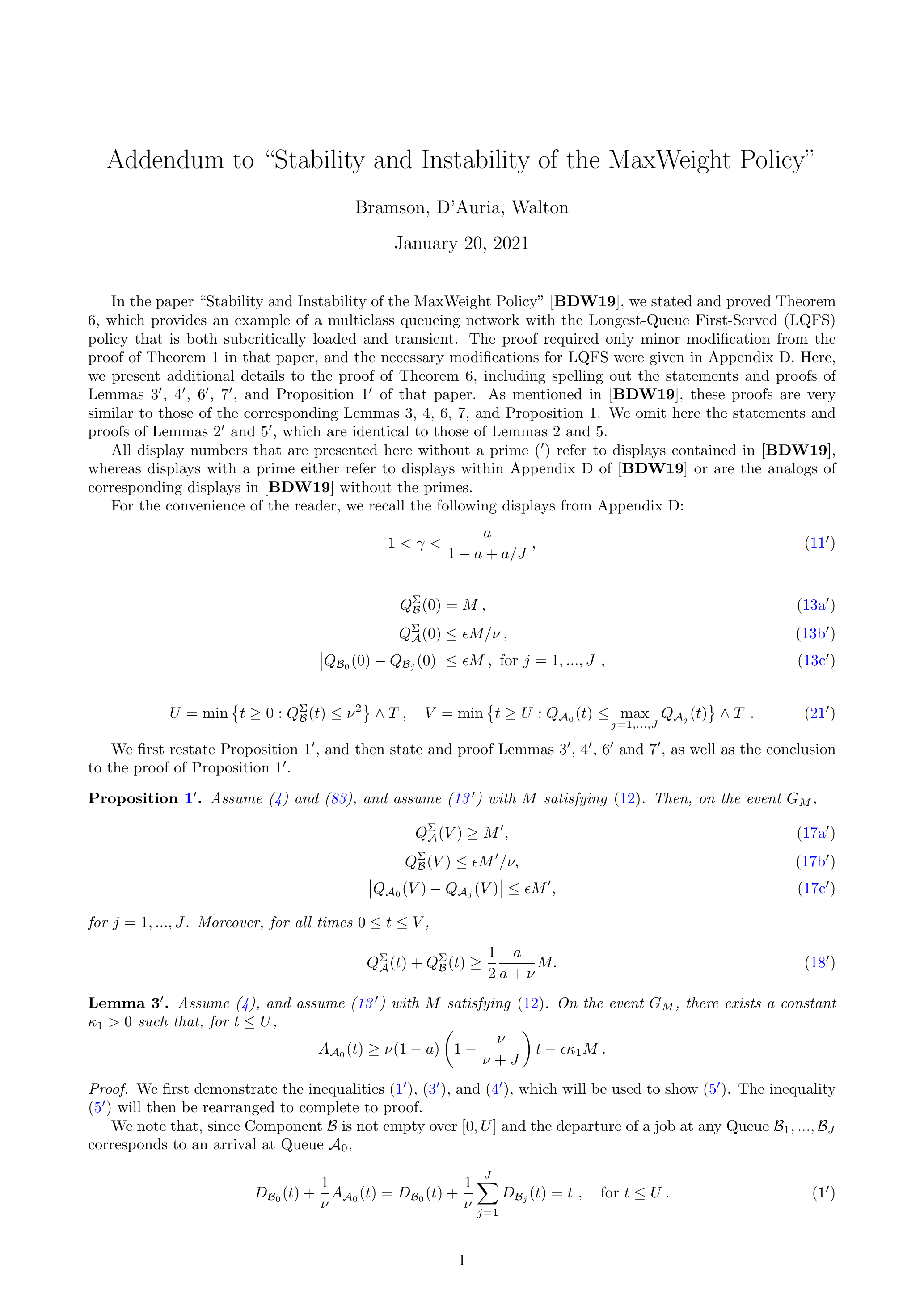}

\end{document}